\setlist{font=\normalfont\upshape}
\numberwithin{equation}{section}
\crefname{equation}{}{}
\let\newterm\emph\relax
\newtheorem{theorem}{Theorem}[section]
\newtheorem{prop}[theorem]{Proposition}
\newtheorem{cor}[theorem]{Corollary}
\newtheorem{lemma}[theorem]{Lemma}
\theoremstyle{definition}
\newtheorem{rmk}[theorem]{Remark}
\newtheorem{ex}[theorem]{Example}
\DeclareMathOperator{\Hom}{Hom}
\DeclareMathOperator{\End}{End}
\DeclareMathOperator{\dgEnd}{dgEnd}
\DeclareMathOperator{\Ext}{Ext}
\DeclareMathOperator{\Tor}{Tor}
\DeclareMathOperator{\RHom}{\mathbf{R}Hom}
\DeclareMathOperator{\RGamma}{\mathbf{R}\Gamma}
\let\ctrtensor\odot\relax
\newcommand{\Acal}{\mathcal{A}}
\newcommand{\Bcal}{\mathcal{B}}
\newcommand{\Ccal}{\mathcal{C}}
\newcommand{\Hcal}{\mathcal{H}}
\newcommand{\Tcal}{\mathcal{T}}
\newcommand{\Ucal}{\mathcal{U}}
\newcommand{\Vcal}{\mathcal{V}}
\newcommand{\Qbb}{\mathbb{Q}}
\newcommand{\Zbb}{\mathbb{Z}}
\newcommand{\pp}{\mathfrak{p}}
\newcommand{\qq}{\mathfrak{q}}
\newcommand{\mm}{\mathfrak{m}}
\newcommand{\Sfr}{\mathfrak{S}}
\newcommand{\Mfr}{\mathfrak{M}}
\newcommand{\ef}{\mathsf{f}}
\newcommand{\ed}{\mathsf{d}}
\newcommand{\D}{\mathcal{D}}
\newcommand{\K}{\mathcal{K}}
\newcommand{\fp}[1]{\mathrm{fp}(#1)}
\newcommand{\rmod}{\mathrm{mod}\mathchar`-}\let\mod\rmod\relax 
\newcommand{\rMod}{\mathrm{Mod}\mathchar`-}\let\Mod\rMod
\newcommand{\lMod}{\mathchar`-\mathrm{Mod}}
\newcommand{\lDisc}{\mathchar`-\mathrm{Disc}}\let\Disc\Disc\relax
\newcommand{\rdgMod}{\mathrm{dgMod}\mathchar`-}\let\dgMod\rdgMod\relax
\newcommand{\ldgMod}{\mathchar`-\mathrm{dgMod}}
\newcommand{\lInj}{\mathchar`-\mathrm{Inj}}
\let\Proj\rProj\relax
\let\Ctra\rCtra\relax
\newcommand{\Ctrapr}[1]{\mathrm{Ctra}_{\mathrm{Proj}}\mathchar`-#1}
\newcommand{\cpt}{\mathrm{c}}
\newcommand{\bdd}{\mathrm{b}}
\newcommand{\fg}{\mathrm{fg}}
\newcommand{\op}{\mathrm{op}}
\newcommand{\id}{\mathrm{injdim}}
\newcommand{\Spec}[1]{\mathrm{Spec}(#1)}
\DeclareMathOperator{\depth}{depth}
\DeclareMathOperator{\height}{ht}
\newcommand{\CM}[1]{\mathrm{CM}(#1)}
\DeclareMathOperator{\Supp}{Supp}
\newcommand*{\Perp}[1]{{}^{\bot_{#1}}}
\DeclareMathOperator{\real}{real}
\DeclareMathOperator{\Add}{Add}
\DeclareMathOperator{\Prod}{Prod}
\DeclareMathOperator{\Def}{Def}
\DeclareMathOperator{\thick}{thick}
\DeclareMathOperator{\Lex}{Lex}
\newcommand{\yo}{\mathbf{y}}
\newcommand{\toeq}{\mathrel{\vbox{\offinterlineskip\ialign{\hfil$##$\hfil\cr%
	\scriptstyle\cong\cr\to\cr}}}}
\def\leqdef{\mathrel{\mathrel{\mathop:}=}}
\newcommand{\wbar}[2][2]{%
	{}\mkern#1mu\overline{\mkern-#1mu#2}}
\newcommand{\what}[2][2]{%
	{}\mkern#1mu\widehat{\mkern-#1mu#2}}
\newcommand{\wtilde}[2][2]{%
	{}\mkern#1mu\widetilde{\mkern-#1mu#2}}
\let\rho\varrho\let\Gamma\varGamma\let\Sigma\varSigma\let\Phi\varPhi\relax
\title{Product-complete tilting complexes and Cohen--Macaulay hearts}
\author{Michal Hrbek}
\address[M.~Hrbek]{Institute of Mathematics of the Czech Academy of Sciences, \v{Z}itn\'{a}~25, 115 67~Prague, Czech Republic}
\email{hrbek@math.cas.cz}
\author{Lorenzo Martini}
\address[L.~Martini]{Dipartimento di Informatica -- Settore di Matematica, Universit\`a di Verona, Strada le Grazie~15 -- Ca’ Vignal, 37134~Verona, Italy}
\email{lorenzo.martini@univr.it}
\subjclass[2020]{Primary: 13D09, 14F08; Secondary: 16D90, 13H10.}
\thanks{The first author was supported by the GA\v CR project 23-05148S and RVO:~67985840.}
\begin{document}
\begin{abstract}
We show that the cotilting heart associated to a tilting complex $T$ is a locally coherent and locally coperfect Grothendieck category (i.e.\ an  Ind-completion of a small artinian abelian category) if and only if $T$ is product-complete. We then apply this to the specific setting of the derived category of a commutative noetherian ring $R$. If $\dim(R)<\infty$, we show that there is a derived duality $\D^\bdd_\fg(R) \cong \D^\bdd(\Bcal)^\op$ between $\mod R$ and a noetherian abelian category $\Bcal$ if and only if $R$ is a homomorphic image of a Cohen--Macaulay ring. Along the way, we obtain new insights about t-structures in $\D^\bdd_\fg(R)$. In the final part, we apply our results to obtain a new characterization of the class of those finite-dimensional Noetherian rings that admit a Gorenstein complex.
\end{abstract}
\maketitle

\tableofcontents

\section{Introduction}%
As conjectured by Sharp \cite{Sha79} and proved almost twenty-five years later by Kawasaki \cite{Kaw02}, a commutative noetherian ring $R$ of finite Krull dimension admits a dualizing complex if and only if it is a homomorphic image of a Gorenstein ring. The existence of a dualizing complex amounts to the existence of a duality on the bounded derived category $\D^\bdd_\fg(R)$ of finitely generated $R$-modules, that is, to a triangle equivalence $\D^\bdd_\fg(R) \cong \D^\bdd_\fg(R)^\op$. In this paper, we characterize the existence of a more general and less symmetric form of duality. Namely, we show in \cref{char-eq} that there is a triangle equivalence $\D^\bdd_\fg(R) \cong \D^\bdd(\Bcal)^\op$ for some noetherian abelian category $\Bcal$ if and only if $R$ is a homomorphic image of a Cohen--Macaulay ring. Due to another deep result of Kawasaki \cite{Kaw08}, such rings are precisely the CM-excellent rings whose Zariski spectrum $\Spec R$ admits a codimension function (see \v Cesnavi\v cius \cite{Ces21} and Takahashi \cite{Tak22} for recent development of CM-excellent rings and schemes).

The main tool we use to obtain the result is the large tilting theory. The way tilting theory enters the picture can be explained already in the dualizing complex setting. Indeed, the duality $\D^\bdd_\fg(R) \cong \D^\bdd_\fg(R)^\op$ is actually a derived equivalence in hiding --- the natural isomorphism $\D^\bdd_\fg(R)^\op \simeq \D^\bdd((\mod R)^\op)$ yields a derived equivalence between the noetherian category $\mod R$ and the artinian category $(\mod R)^\op$. Following Yekutieli and Zhang \cite{YZ05} (cf.\ \cite[Theorem~6.2]{YZ06} for an extension to the non-affine case), the duality realizes $(\mod R)^\op$, up to equivalence, as the heart of the perverse t-structure in $\D^\bdd_\fg(R)$, which is obtained by dualizing the canonical t-structure. Alonso, Jerem\'\i as and Saor\'\i n \cite[\S 6]{ATJLS10} showed that the perverse t-structure extends to a compactly generated t-structure in $\D(R)$, which they call the Cohen--Macaulay t-structure. By a recent result of Pavon and Vit\'oria \cite{PV20}, this t-structure is cotilting and thus induces an unbounded derived equivalence $\D(\Hcal_\mathrm{CM}) \cong \D(R)$ between the heart $\Hcal_\mathrm{CM}$ and $\Mod R$.

Recently, silting and cosilting complexes induced by codimension functions, and the corresponding t-structures, have been studied and explicitly constructed by Nakamura, Šťovíček, and the first author in \cite{HNS}. In particular, the Cohen--Macaulay heart $\Hcal_\mathrm{CM}$ can be defined even in the absence of a dualizing complex; the caveat is that the question of when $\Hcal_\mathrm{CM}$ is derived equivalent to $\Mod R$ remains open in this generality, see \cite[Question~7.8]{HNS}. Now the derived equivalence behind the generalized duality $\D^\bdd_\fg(R) \cong \D^\bdd(\Bcal)^\op$ is between $\mod R$ and the artinian category $\Acal = \Bcal^\op$. It follows from the theory of Roos \cite{Roos69} (see \cref{Roos}) that the induced derived equivalence of unbounded derived categories is between the module category $\Mod R$ and a locally coherent and locally coperfect Grothendieck category (see \cref{ss:loccoh}), which turns out to be precisely $\Hcal_\mathrm{CM}$. 

We start in \cref{s:tilting} by recalling the basic notions of large tilting and cotilting theory, including a neat characterization \cref{tilting-bounded} of the derived equivalences induced by (bounded) tilting and cotilting complexes. Using the theory of topological endomorphism rings of tilting modules developed by Positselski and Šťovíček \cite{PS19b} and recently extended to tilting and cotilting complexes in \cite{Hrb22}, relying heavily on Positselski's theory of contramodules over topological rings \cite{contramodules}, we show in \cref{s:prod-comp} that the heart of a cotilting t-structure induced by a large tilting complex $T$ (over any associative ring $R$) is locally coherent and locally coperfect if and only if $T$ is product-complete. Starting with \cref{s:comnoeth}, we specialize to the setting of a commutative noetherian ring $R$ and recall the relevant aspects of the theory of compactly generated and restrictable t-structures in $\D(R)$. In \cref{prod-add}, we show that any codimension function induces a product-complete tilting complex if $R$ is a homomorphic image of a finite-dimensional Cohen--Macaulay ring. In the next \cref{s:cmexc}, we characterize the restrictability of a codimension filtration t-structure via the CM-excellent condition and show a sort of a converse to the recent result on t-structures in $\D^\bdd_\fg(R)$ of Takahashi \cite{Tak22}. In \cref{s:cmexc2}, we prove the promised characterization of homomorphic images of Cohen--Macaulay rings in terms of derived equivalences (see \cref{char-eq}). In the final \cref{s:gor} we apply our results to the theory of Gorenstein complexes. In particular, in \cref{Gorenstein-fpinj} and \cref{Gor-eq} we characterize finite-dimensional rings admitting a Gorenstein complex as those homomorphic images of Cohen--Macaulay rings for which $\fp{\Hcal_\mathrm{CM}}$, the abelian category of finitely presentable objects in the Cohen--Macaulay heart, admits an injective cogenerator (see \cref{fp-inj-cogen}).

\subsection*{Acknowledgements}
We are grateful to Leonid Positselski and Amnon Yekutieli for useful comments on an earlier draft of this paper. We would like to express our thanks to the anonymous referees for many very useful comments.

\section{Tilting and cotilting complexes}%
\label{s:tilting}%
Let $\Tcal$ be a triangulated category. A \newterm{t-structure} in $\Tcal$ is a pair $(\Ucal,\Vcal)$ of full subcategories such that $\Hom_\Tcal(\Ucal,\Vcal)=0$, $\Ucal[1]\subseteq\Ucal$, and such that for each $X \in \Tcal$ there is a triangle $U\buildrel u_X\over\to X \buildrel v_X\over\to V\buildrel+\over\to$ with $U \in \Ucal$ and $V \in \Vcal$. In fact, the latter triangle is functorially unique in the sense that $u_X$ is a $\Ucal$-coreflection map and $v_X$ is a $\Vcal$-reflection map of $X$ in $\Tcal$. The \newterm{heart} $\Hcal = \Ucal[-1] \cap \Vcal$ of the t-structure is an abelian category whose exact sequences are precisely the triangles of $\Tcal$ with all components belonging to $\Hcal$. See \cite{BBD81} for details. For any $Y \in \Tcal$ we define the following (always full, additive, isomorphism-closed) subcategories of $\Tcal$ by orthogonality relations:
\begin{align*}
	Y\Perp{>0} &= \{X \in \Tcal \mid \Hom_\Tcal(Y,X[i]) = 0\ \forall i>0\}, \cr
	Y\Perp{\leq 0} &= \{X \in \Tcal \mid \Hom_\Tcal(Y,X[i]) = 0\ \forall i\leq 0\}, \cr
	\Perp{>0}Y &= \{X \in \Tcal \mid \Hom_\Tcal(X,Y[i]) = 0\ \forall i>0\}, \cr
	\Perp{\leq 0}Y &= \{X \in \Tcal \mid \Hom_\Tcal(X,Y[i]) = 0\ \forall i\leq 0\}.
\end{align*}

Following Psaroudakis and Vit\'oria \cite{PV18} and Nicol\'as, Saor\'\i n, and Zvonareva \cite{NSZ19}, an object $T \in \Tcal$ is \newterm{silting} if $(T\Perp{>0},T\Perp{\leq 0})$ is a t-structure in $\Tcal$. We call the latter t-structure a \newterm{silting t-structure} induced by $T$, and we call two silting objects $T$ and $T'$ \newterm{equivalent} if they induce the same silting t-structure. Given an object $X$, let $\Add(X)$ be the subcategory of all direct summands of all coproducts of copies of $X$ which exist in $\Tcal$. If $\Tcal$ has all set-indexed coproducts then two silting objects $T$ and $T'$ are equivalent if and only if $\Add(T) = \Add(T')$ by \cite[Lemma 4.5(ii)]{PV18}. Let $\Hcal_T$ be the heart of the silting t-structure induced by $T$. We say that a silting object $T$ is \newterm{tilting} if $\Add(T) \subseteq \Hcal_T$. \newterm{Cosilting} and \newterm{cotilting} objects and t-structures are defined dually: an object $C$ is cosilting if $(\Perp{\leq 0}C,\Perp{>0}C)$ is a t-structure in $\Tcal$, and it is cotilting if in addition $\Prod(C) \subseteq \Hcal_C$, where $\Hcal_C$ is the heart of the t-structure and $\Prod(X)$ is the subcategory of all direct summands of arbitrary existing products of copies of $X$. Again we say that two cosilting objects $C$ and $C'$ are \newterm{equivalent} if they induce the same t-structure, and this happens precisely when $\Prod(C) = \Prod(C')$ provided that $\Tcal$ has all set-indexed products.

If $\Ccal$ is an abelian category, we let $\D(\Ccal)$ be the unbounded derived category and $\D^\bdd(\Ccal)$ the bounded derived category of cochain complexes. In all situations we consider, $\Ccal$ is either essentially small or it is Grothendieck, so the respective derived categories suffer no set-theoretic existential crises. Let $R$ be a (unital, associative) ring. We will be mostly interested in the case when the role of $\Tcal$ is played by one of $\D(\rMod R)$, $\D^\bdd(\rMod R)$, or $\D^\bdd(\mod R)$, the unbounded derived category of all right $R$-modules, the bounded derived category of all right $R$-modules, or the bounded derived category of finitely presented right $R$-modules, respectively; the last category is well-defined if and only if $R$ is a right coherent ring, which amounts to $\mod R$ being an abelian category. 

Recall that if $\Tcal = \D(\Ccal)$ or $\Tcal = \D^\bdd(\Ccal)$ then it admits the \newterm{standard t-structure} $(\D^{<0},\D^{\geq 0})$ defined by vanishing of the cochain complex cohomology: $\D^{<0} = \{X \in \Tcal \mid H^i(X) = 0\ \forall i \geq 0\}$ and $\D^{\geq 0} = \{X \in \Tcal \mid H^i(X) = 0\ \forall i < 0\}$. We define the subcategories $\D^{< n}$, $\D^{\geq n}$, $\D^{\leq n}$ and $\D^{> n}$ of $\Tcal$ for $n \in \Zbb$ analogously. Note that in either of the cases $\Tcal = \D(\Mod R)$, $\Tcal = \D^\bdd(\rMod R)$, or $\D^\bdd(\mod R)$ (the last one assumes $R$ right coherent), the standard t-structure $(\D^{\leq 0},\D^{> 0})$ is equal to $(R\Perp{>0},R\Perp{\leq 0})$ and thus induced by the tilting object $R$. Similarly for a choice of an injective cogenerator $W$ of $\Mod R$, the standard t-structure $(\D^{<0},\D^{\geq 0})$ is equal to $(\Perp{\leq 0}W,\Perp{> 0}W)$ and thus induced by the cotilting object $W$. The heart of either of these t-structure is equivalent to $\Mod R$ (resp. to $\mod R$ in the case $\Tcal = \D^\bdd(\mod R)$).

\begin{lemma}[{\cite[Theorem~3.11]{BHM}}]\label{BHM-crit}%
Let $R$ be a ring and $T$ an object of $\D(\rMod R)$. The following are equivalent:

\begin{enumerate}[(a)]
\item $T$ belongs to $\K^\bdd(\Proj R)$ and it is a silting object in $\D(\rMod R)$;

\item $T$ belongs to $\K^\bdd(\Proj R)$, $\Add(T) \subseteq T\Perp{>0}$, and $T$ generates $\D(\rMod R)$.
\end{enumerate}

\noindent Dually, the following are equivalent for $C \in \D(R\lMod)$:

\begin{enumerate}[(a\textsuperscript{co})]
\item $C$ belongs to $\K^\bdd(R\lInj)$ and it is a cosilting object in $\D(R\lMod)$;

\item $C$ belongs to $\K^\bdd(R\lInj)$, $\Prod(C) \subseteq \Perp{>0}C$, and $C$ cogenerates $\D(R\lMod)$.
\end{enumerate}
\end{lemma}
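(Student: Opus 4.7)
Focusing on the silting equivalence $(a)\Leftrightarrow(b)$ (the cosilting statement is formally dual, replacing coproducts with products and $\K^\bdd(\Proj R)$ with $\K^\bdd(R\lInj)$), the argument splits in two. For $(a)\Rightarrow(b)$, assume $(T\Perp{>0}, T\Perp{\leq 0})$ is a t-structure and consider the approximation triangle $U \to T \to V$ of $T$ itself. Since $V \in T\Perp{\leq 0}$ forces $\Hom(T,V) = 0$, the map $T \to V$ vanishes, which via the long exact sequence of $\Hom(T,-)$ exhibits $T$ as a direct summand of $U$; closure of $T\Perp{>0}$ under summands then gives $T \in T\Perp{>0}$. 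The aisle $T\Perp{>0}$ is closed under coproducts (being the left orthogonal of the coaisle), so $\Add(T) \subseteq T\Perp{>0}$. Generation follows because $\Hom(T, X[i]) = 0$ for all $i \in \Zbb$ places $X$ in $T\Perp{>0} \cap T\Perp{\leq 0}$, and the intersection of the aisle and coaisle of any t-structure is zero.

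For $(b)\Rightarrow(a)$, the closures of $T\Perp{>0}$ and $T\Perp{\leq 0}$ under $[1]$ and $[-1]$ are immediate. The plan is to introduce the subcategory $\Ucal \subseteq \D(\rMod R)$ obtained by closing $\{T[n] : n \geq 0\}$ under coproducts and extensions, to verify that $\Ucal$ is the aisle of a t-structure, and then to identify that t-structure with $(T\Perp{>0}, T\Perp{\leq 0})$. Existence of the aisle generated by a set of objects in the well-generated triangulated category $\D(\rMod R)$ is a standard result obtained via Brown representability or the Alonso--Jerem\'\i as--Souto theorem; this is where the hypothesis $T \in \K^\bdd(\Proj R)$ plays its essential role and constitutes the main obstacle in the argument, as one must ensure that the subcategory $\Ucal$ built from the bounded complex of projectives $T$ actually admits a right adjoint inclusion. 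The inclusion $\Ucal \subseteq T\Perp{>0}$ follows from $\Add(T) \subseteq T\Perp{>0}$ together with the closure of $T\Perp{>0}$ under coproducts, extensions, and positive shifts; the coaisle is, by construction, $\{V : \Hom(T[n], V) = 0\ \forall n \geq 0\} = T\Perp{\leq 0}$.

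For the reverse inclusion $T\Perp{>0} \subseteq \Ucal$, given $X \in T\Perp{>0}$ take the approximation triangle $U \to X \to V$ for the t-structure with aisle $\Ucal$: both $U$ and $X$ lie in $T\Perp{>0}$, so the long exact sequence of $\Hom(T,-)$ forces $V \in T\Perp{>0}$ as well, hence $\Hom(T, V[i]) = 0$ for all $i \in \Zbb$; the generation hypothesis then yields $V = 0$ and $X \cong U \in \Ucal$, closing the argument.
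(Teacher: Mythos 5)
The paper does not prove this lemma itself but cites it as \cite[Theorem~3.11]{BHM}, so there is no in-paper proof to compare against; I will assess your argument on its own merits.

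Your argument for $\mathrm{(a)}\Rightarrow\mathrm{(b)}$ is correct: the split-triangle observation puts $T$ in the aisle, an aisle is the full left orthogonal of the coaisle and hence closed under coproducts and summands, and the generation condition comes from $\bigcap_n \Ucal[n] \cap \bigcap_n \Vcal[n] = 0$.

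The direction $\mathrm{(b)}\Rightarrow\mathrm{(a)}$ has a genuine gap. The crux of your argument is the inclusion $\Ucal \subseteq T\Perp{>0}$, where $\Ucal$ is the cocomplete pre-aisle generated by $\{T[n]\mid n\ge 0\}$, and you obtain it by asserting that $T\Perp{>0}$ is ``closed under coproducts, extensions, and positive shifts.'' Closure under extensions and positive shifts is immediate from the definition of $T\Perp{>0}$, but closure under coproducts is precisely the nontrivial part of the theorem and you give no justification for it. For an arbitrary object $T$ this closure simply fails, because $\Hom_{\D(\rMod R)}(T,-)$ need not commute with coproducts; and the hypothesis $\Add(T) \subseteq T\Perp{>0}$ only controls coproducts of copies of $T$, not coproducts of arbitrary objects of $T\Perp{>0}$. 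Establishing that $T\Perp{>0}$ is cocomplete is where $T \in \K^\bdd(\Proj R)$ and the self-orthogonality hypothesis really do the work, and it requires a genuine argument (for instance exploiting the finite stupid filtration of $T$ by its projective terms). Note also that you misattribute the role of the hypothesis $T \in \K^\bdd(\Proj R)$: the existence of a right adjoint to the inclusion of the cocomplete pre-aisle generated by the \emph{set} $\{T[n] \mid n \ge 0\}$ already follows from well-generation of $\D(\rMod R)$ via Alonso--Jerem\'\i as--Souto, with no hypothesis on $T$ at all; what you actually need $T \in \K^\bdd(\Proj R)$ for is exactly the coproduct-closure step you skipped. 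Once that inclusion is secured, your remaining argument (computing the coaisle and running the long exact sequence of $\Hom(T,-)$ on the approximation triangle of an arbitrary $X \in T\Perp{>0}$, then invoking generation) is correct.
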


\begin{rmk}\label{silt-DvsDb}
	If $T \in \K^\bdd(\Proj R)$ is a silting object in $\D(\Mod R)$ then it is also a silting object in $\D^\bdd(\Mod R)$, as the induced silting t-structure in $\D(\Mod R)$ restricts to $\D^\bdd(\Mod R)$. \cref{BHM-crit} shows in particular that the converse is also true: An object $T \in \K^\bdd(\Proj R)$ is silting in $\D(\Mod R)$ if and only if it is silting in $\D^\bdd(\Mod R)$. Analogous statement holds for cosilting objects in $\K^\bdd(R\lInj)$, as well as for the tilting and cotilting variants.
\end{rmk}

An object $T \in \D(\rMod R)$ satisfying the condition (a) or (b) above is called a \newterm{silting complex}, dually we have \newterm{cosilting complexes} over $R\lMod$. Silting complexes which are tilting objects are called \newterm{tilting complexes}, similarly we have \newterm{cotilting complexes}. Tilting (resp., cotilting) complexes parametrize bounded derived equivalences to cocomplete abelian categories with a projective generator (resp., to Grothendieck categories) as we now recall. This characterization is for the most part known to experts, see \cite[Theorem A]{PV18} and \cite[Theorem 7.12]{SV18}, which both also apply to larger generality than module categories. However, the first reference does not directly apply to obtain (ii\textsuperscript{co}) below, while we diverge from the latter one by not assuming the derived equivalences of (ii) and (ii\textsuperscript{co}) to extend to unbounded derived categories, a priori. Therefore, we include the following statement and its proof for completeness.

\begin{theorem}\label{tilting-bounded}%
Let $R$ be a ring. Then:

\begin{enumerate}[(i)]
\item Let $T \in \D(\rMod R)$ be a tilting complex. The heart $\Hcal_T$ is a cocomplete abelian category with a projective generator and there is a triangle equivalence $\D^\bdd(\Mod R) \cong \D^\bdd(\Hcal_T)$.

\item Let $\Hcal$ be a cocomplete abelian category with a projective generator and let $\D^\bdd(\Mod R) \cong \D^\bdd(\Hcal)$ be a triangle equivalence. Then there is a tilting complex $T \in \D(\rMod R)$ with $\Hcal_T \cong \Hcal$.

\item[(i\textsuperscript{co})] Let $C \in \D(R\lMod)$ be a cotilting complex. The heart $\Hcal_C$ is a complete abelian category with an injective cogenerator and there is a triangle equivalence $\D^\bdd(R\lMod) \cong \D^\bdd(\Hcal_C)$.

\item[(ii\textsuperscript{co})] Let $\Hcal$ be a complete abelian category with an injective cogenerator and let $\D^\bdd(R\lMod) \cong \D^\bdd(\Hcal)$ be a triangle equivalence. Then there is a cotilting complex $C \in \D(R\lMod)$ with $\Hcal_C \cong \Hcal$.
\end{enumerate}

\noindent In addition, the cotilting heart $\Hcal_C$ from \textup{(i\textsuperscript{co})} or \textup{(ii\textsuperscript{co})} is automatically a Grothendieck category.
\end{theorem}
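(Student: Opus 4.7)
The plan is to treat the forward directions (i) and (i') via the Beilinson--Bernstein--Deligne realization functor, and the converse directions (ii) and (ii') by extracting a tilting (resp.\ cotilting) complex from the given equivalence and applying the recognition criterion \cref{BHM-crit}.

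For (i), I would first establish that $\Hcal_T$ is cocomplete with projective generator $T$. Projectivity of $T$ in $\Hcal_T$ is immediate from $\Hom_{\D(R)}(T,T[i])=0$ for $i>0$ combined with the tilting hypothesis $\Add T\subseteq\Hcal_T$. Generation follows since any $X\in\Hcal_T$ with $\Hom_{\Hcal_T}(T,X)=0$ satisfies $\Hom_{\D(R)}(T,X[i])=0$ for all $i\in\Zbb$, and hence vanishes because $T\Perp{>0}\cap T\Perp{\leq 0}=0$ by the t-structure axioms. Cocompleteness of $\Hcal_T$ reduces to the aisle $T\Perp{>0}$ being closed under coproducts in $\D(\rMod R)$, which follows from the silting property together with $T\in\K^\bdd(\Proj R)$. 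For the derived equivalence, I would apply the BBD realization functor $\real\colon\D^\bdd(\Hcal_T)\to\D(\rMod R)$. Full faithfulness amounts to showing that the canonical comparison $\Ext^n_{\Hcal_T}(X,Y)\to\Hom_{\D(R)}(X,Y[n])$ is an isomorphism for $X,Y\in\Hcal_T$ and $n\geq0$, which follows by dimension shifting using $T$ as a projective generator together with the tilting condition. Essential surjectivity onto $\D^\bdd(\Mod R)$ then follows by d\'evissage using the silting cohomological truncations, which take bounded values on $\D^\bdd(\Mod R)$ because $T\in\K^\bdd(\Proj R)$.

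For (ii), I would take a projective generator $P$ of $\Hcal$ and let $T\in\D^\bdd(\Mod R)$ be its image under the given equivalence $\D^\bdd(\Hcal)\toeq\D^\bdd(\Mod R)$. The self-orthogonality and the condition $\Add T\subseteq T\Perp{>0}$ transport directly from the projectivity of $P$ in $\Hcal$, and generation of $\D(\rMod R)$ by $T$ reduces to $P$ generating $\D^\bdd(\Hcal)$. The delicate point is that $T\in\K^\bdd(\Proj R)$: this I would obtain by observing that the equivalence transports $\Hcal$ to a heart $\Hcal'\subseteq\D^\bdd(\Mod R)$ of a bounded t-structure for which $T$ is a projective generator, then extending the associated aisle to a silting t-structure on $\D(\rMod R)$ (using cocompleteness of $\Hcal$); the silting generator of any such extension is automatically in $\K^\bdd(\Proj R)$ by the forward direction. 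Applying \cref{BHM-crit}(ii) then identifies $T$ as the desired tilting complex, and an inspection shows $\Hcal_T\cong\Hcal$. The dual statements (i') and (ii') are proved by parallel arguments, working with the cosilting t-structure, the injective cogenerator $C$, and the dual realization functor; no passage to opposite rings is needed because the constructions dualize directly within $\D(R\lMod)$.

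The remaining assertion, that every cotilting heart $\Hcal_C$ is automatically a Grothendieck category, is in my view the main obstacle. Completeness and the injective cogenerator come from (i'), but Grothendieckness requires exactness of directed colimits (AB5), which does not follow formally from the remaining conditions. For this, I would invoke the known result that hearts of pure-injective cosilting t-structures are Grothendieck (as established by Psaroudakis--Vit\'oria and \v{S}\v{t}ov\'\i\v{c}ek), together with the standard fact that every $C\in\K^\bdd(R\lInj)$ is pure-injective in $\D(R\lMod)$.
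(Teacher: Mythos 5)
The overall architecture (realization functor for the forward directions; pulling back a projective/injective generator and invoking \cref{BHM-crit} for the converses; then citing existing results for Grothendieckness of the cotilting heart) matches the paper's plan, and your sketch of (i)/(i') and of the Grothendieck step is essentially fine. However, there is a genuine gap in your treatment of (ii) (and dually (ii')) at the one point the paper considers ``delicate'': showing that $T\in\K^\bdd(\Proj R)$ (resp.\ $C\in\K^\bdd(R\lInj)$).

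Your argument there is circular. You propose to transport $\Hcal$ to a heart $\Hcal'\subseteq\D^\bdd(\Mod R)$ of a bounded t-structure, ``extend the associated aisle to a silting t-structure on $\D(\rMod R)$ using cocompleteness of $\Hcal$,'' and then conclude ``the silting generator of any such extension is automatically in $\K^\bdd(\Proj R)$ by the forward direction.'' Two problems. First, it is not established, and is not an obvious consequence of cocompleteness, that a bounded t-structure on $\D^\bdd(\Mod R)$ with projective-generated heart extends to a t-structure on the unbounded category $\D(\rMod R)$, let alone a silting one. Second, and more fundamentally, ``by the forward direction'' cannot yield the conclusion: (i) starts from the hypothesis that $T$ is a tilting \emph{complex}, which by definition (via \cref{BHM-crit}) already places $T$ in $\K^\bdd(\Proj R)$. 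Nothing in (i) asserts that a silting \emph{object} (large, not a priori in $\K^\bdd(\Proj R)$) with a heart of this type must lie in $\K^\bdd(\Proj R)$. You are assuming the conclusion.

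The paper handles this without extending any t-structure. It observes that having bounded injective (resp.\ projective) dimension is an intrinsic triangulated-categorical property of an object of $\D^\bdd(\Acal)$ for any complete abelian category $\Acal$ with enough injectives: $X$ has finite injective dimension iff for every $Y\in\D^\bdd(\Acal)$ there is $k>0$ with $\Hom_{\D^\bdd(\Acal)}(Y,X[i])=0$ for all $i\geq k$ (this is proved as in Rickard's argument, cf.\ \cite[Proposition~6.2]{Ri89}). Since the equivalence $\D^\bdd(R\lMod)\cong\D^\bdd(\Hcal)$ preserves and reflects this condition, and the injective cogenerator $W$ visibly satisfies it, so does $C=\alpha^{-1}(W)$, giving $C\in\K^\bdd(R\lInj)$ directly. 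To repair your proof, replace the ``extend the t-structure'' step with this intrinsic characterization.
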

\begin{proof}
(i) and (i\textsuperscript{co}) are proved in \cite[Corollary~5.2]{PV18}. The proofs of (ii) and (ii\textsuperscript{co}) are analogous, we prove (ii\textsuperscript{co}) here. Let $\alpha\colon\D^\bdd(R\lMod) \toeq \D^\bdd(\Hcal)$ denote the triangle equivalence and let $W$ be an injective cogenerator of $\Hcal$. Set $C = \alpha^{-1}(W) \in \D^\bdd(R\lMod)$. By definition, $W$ is a cotilting object of $\D^\bdd(\Hcal)$ inducing the standard t-structure $(\Perp{\leq 0}W,\Perp{>0}W) = (\D^{<0},\D^{\geq 0})$, thus $C$ is a cotilting object in $\D^\bdd(R\lMod)$. To check that $C$ is a cotilting complex in $\D(R\lMod)$, it suffices in view of \cref{BHM-crit} and \cref{silt-DvsDb} to verify that $C \in \K^\bdd(R\lInj)$. To see this, we claim that the objects of bounded injective dimension in both $\D^\bdd(R\lMod)$ and $\D^\bdd(\Hcal)$ are characterized in terms of the triangulated category structure, and so are preserved and reflected by the equivalence. Indeed, let $\Acal$ be any complete abelian category with enough injectives. Then an object $X \in \D^\bdd(\Acal)$ is of finite injective dimension if and only if for each $Y \in \D^\bdd(\Acal)$ there is $k > 0$ such that $\Hom_{\D^\bdd(\Acal)}(Y,X[i]) = 0$ for all $i \geq k$; this is proved the same way as \cite[Proposition~6.2]{Ri89}. We showed that $C$ is a cotilting complex in $\D(R\lMod)$ and, by the construction, $\Hcal_C \cong \Hcal$.

The final claim follows by \cite[Proposition~3.10]{MV18} and \cite[Theorem~3.6]{AHMV17}.
\end{proof}
\section{Product-complete tilting objects}%
\label{s:prod-comp}%
In this section, let $\Tcal$ be a compactly generated triangulated category with $\Tcal^\cpt$ the full subcategory of compact objects. We start by recalling some notions of the purity theory in $\Tcal$ of Krause \cite{Kr00} and Beligiannis \cite{Bel}. By definition, $\Tcal^\cpt$ is a ringoid (i.e.\ an essentially small preadditive category). Thus we can consider the module category $\Mod{\Tcal^\cpt}$ over the ringoid $\Tcal^\cpt$, that is, the category of all contravariant additive functors $\Tcal^\cpt \to \Mod \Zbb$. We denote by $\yo\colon\Tcal \to \Mod{\Tcal^\cpt}$ the \newterm{restricted Yoneda functor}, which is defined by the assignment $X \mapsto \Hom_\Tcal(-,X)_{\restriction \Tcal^\cpt}$. A morphism $f\colon X \to Y$ in $\Tcal$ is a \newterm{pure monomorphism} if $\yo f$ is a monomorphism in $\Mod{\Tcal^\cpt}$. An object $X \in \Tcal$ is \newterm{pure-injective} if every pure monomorphism $f\colon X \to Y$ in $\Tcal$ is a split monomorphism, or equivalently, if $\yo X$ is an injective object in $\Mod{\Tcal^\cpt}$, and it is \newterm{$\Sigma$-pure-injective} if every object in $\Add(X)$ is pure-injective. Since every object of the form $\yo X$ for $X \in \Tcal$ is \newterm{fp-injective} (see \cref{fp-inj-cogen}) in $\Mod{\Tcal^\cpt}$ by \cite[Lemma 1.6]{Kr00}\footnote{Conversely, every fp-injective object of $\Mod{\Tcal^\cpt}$ is a direct limit of objects in the essential image of $\yo: \Tcal \to \Mod{\Tcal^\cpt}$, see \cite[Lemma 2.7, Theorem 2.8]{Kr00}.}, we also have that $X$ is pure-injective in $\Tcal$ if and only if $\yo X$ is pure-injective in $\Mod{\Tcal^\cpt}$.

Recall that $\Tcal$, being compactly generated, has all coproducts by definition and also all products by \cite[Corollary 1.18]{Nee01}. Extending the classical definition from modules to triangulated setting, we call an object $X \in \Tcal$ \newterm{product-complete} if $\Add(X)$ is closed under taking arbitrary products in $\Tcal$.
\begin{lemma}\label{product-complete}
 If $X \in \Tcal$ is a product-complete object then $X$ is $\Sigma$-pure-injective and $\Add(X) = \Prod(X)$.
\end{lemma}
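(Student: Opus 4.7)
The strategy is to pass along the restricted Yoneda $\yo$ to the Grothendieck category $\Mod{\Tcal^\cpt}$ and reduce to a classical Gruson--Jensen type theorem for absolutely pure modules. The inclusion $\Prod(X)\subseteq\Add(X)$ is immediate from the hypothesis, since $X^I\in\Add(X)$ by assumption and $\Add(X)$ is closed under summands; the real content is the $\Sigma$-pure-injectivity of $X$ together with the converse inclusion.

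I would first note that $\yo$ preserves products (since $\Hom_\Tcal(C,-)$ commutes with them for every $C$) and coproducts (by compactness of the objects of $\Tcal^\cpt$), so $(\yo X)^I = \yo(X^I) \in \Add(\yo X)$ for every set $I$, i.e., $\yo X$ is product-complete in $\Mod{\Tcal^\cpt}$. Combined with the absolute purity of $\yo X$ already recorded in the excerpt, the classical theorem that an absolutely pure product-complete object of a Grothendieck module category is $\Sigma$-pure-injective yields pure-injectivity of every $(\yo X)^{(I)} = \yo(X^{(I)})$. Being also absolutely pure, each such object is in fact injective, since in any Grothendieck category an absolutely pure object is injective as soon as it is pure-injective (every embedding out of it is automatically pure, hence splits). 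By the Krause characterization recalled in the excerpt, each $X^{(I)}$ is then pure-injective in $\Tcal$, which is exactly the $\Sigma$-pure-injectivity of $X$.

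For the remaining inclusion $\Add(X)\subseteq\Prod(X)$, I apply $\yo$ to the canonical morphism $X^{(I)}\to X^I$: it becomes the standard monomorphism $(\yo X)^{(I)}\hookrightarrow (\yo X)^I$ in $\Mod{\Tcal^\cpt}$, so the original map is a pure monomorphism in $\Tcal$. Since $X^{(I)}$ is pure-injective by the preceding step, this pure monomorphism splits, exhibiting $X^{(I)}$ as a summand of $X^I\in\Prod(X)$ and thereby closing the argument.

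The principal obstacle lies in the Gruson--Jensen input in the ringoid setting. If no direct reference is convenient, the standard module-theoretic argument carries over verbatim to $\Mod{\Tcal^\cpt}$: one runs a descending chain of pp-definable (equivalently, finite matrix) subgroups of $\yo X$ and extracts from product-completeness the chain condition that underlies $\Sigma$-pure-injectivity, using that $\Mod{\Tcal^\cpt}$ admits a generating set of finitely presented projectives.
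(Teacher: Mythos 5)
Your proposal is correct and follows essentially the same route as the paper: pass along the restricted Yoneda functor to $\Mod{\Tcal^\cpt}$ (using that $\yo$ preserves products and coproducts), invoke the module-theoretic fact that a product-complete module over a ringoid is $\Sigma$-pure-injective (the paper cites a generalization of \cite[Lemma~2.32]{GT12}, which does not actually need the absolute-purity hypothesis you carry along), and then split the canonical pure monomorphism $X^{(\varkappa)} \to X^\varkappa$ using pure-injectivity of $X^{(\varkappa)}$. Your detour through absolute purity and injectivity of $\yo(X^{(I)})$ is a slightly longer way to land on the same equivalence (pure-injectivity in $\Tcal$ versus in $\Mod{\Tcal^\cpt}$) that the paper uses directly, but it is sound.
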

\begin{proof}
 Since $\yo$ commutes with both products and coproducts, we easily check that $\yo X$ is a product-complete module whenever $X$ is product-complete in $\Tcal$. By a straightforward generalization of \cite[Lemma~2.32]{GT12} to modules over ringoids, $\yo X$ is a $\Sigma$-pure-injective module, and therefore $X$ is $\Sigma$-pure-injective in $\Tcal$.

By the definition of product-completeness, we have $\Prod(X) \subseteq \Add(X)$. The converse inclusion follows by considering the canonical pure monomorphism $X^{(\varkappa)} \to X^\varkappa$ for a cardinal $\varkappa$ and the fact that $X^{(\varkappa)}$ is pure-injective, which follows by the previous paragraph. This implies that $X^{(\varkappa)} \to X^\varkappa$ is a split monomorphism.
\end{proof}

A subcategory $\Ccal$ of $\Tcal$ is called \newterm{definable} if there is a set $\Phi$ of maps in $\Tcal^\cpt$ such that $\Ccal = \{X \in \Tcal \mid \Hom_{\Tcal}(f,X) = 0\ \forall f \in \Phi\}$. The smallest definable subcategory of $\Tcal$ containing an object $X$ will be denoted by $\Def(X)$. Explicitly, $\Def(X) = \{X \in \Tcal \mid \Hom_{\Tcal}(f,X) = 0\ \forall f \in \Phi_X\}$, where $\Phi_X$ consists of all maps $f$ in $\Tcal^\cpt$ such that $\Hom_{\Tcal}(f,X) = 0$.

Assume that $\Tcal$ underlies a compactly generated derivator, see \cite{Lak20} for details. This is a weak assumption allowing computation of homotopy limits and colimits. In case $\Tcal = \D(\rMod R)$ (and, more generally, whenever $\Tcal$ is the homotopy category of a stable model category), homotopy limits and colimits are just the derived functors of limits and colimits. As a consequence, directed colimits being exact in $\Mod R$ (as they are in any Grothendieck category), their derived functor is just the usual directed colimit of complexes (i.e., the directed colimit of modules, applied component-wise), see e.g.\ \cite[Appendix]{HN21} for details. Laking \cite{Lak20} shows that definable subcategories of $\Tcal$ are precisely the subcategories closed under products, pure subobjects and directed homotopy colimits. Equivalently, one can replace directed homotopy colimits by pure quotients in this characterization --- this was proved assuming that $\Tcal$ is algebraic by Laking and Vitória \cite[Theorem 4.7]{LV20}, the algebraic assumption was later removed by Saorín and Šťovíček in \cite[Remark 8.8]{SS20}.

\begin{lemma}\label{product-complete-tilting}%
Assume that $\Tcal$ underlies a compactly generated derivator. Let $T \in \Tcal$ be a product-complete tilting object. Then $\Add(T)$ is a definable subcategory of $\Tcal$.
\end{lemma}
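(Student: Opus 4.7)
The plan is to verify the three closure properties characterising definable subcategories in the sense of Laking~\cite{Lak20}: closure under products, pure subobjects, and directed homotopy colimits. Closure under products is exactly the definition of product-completeness. For the other two conditions, the strategy is to pass to the module category $\Mod{\Tcal^\cpt}$ via the restricted Yoneda functor $\yo$, apply classical module-theoretic facts about the product-complete module $\yo T$, and then transport the conclusion back to $\Tcal$ using the $\Sigma$-pure-injectivity of $T$ furnished by \cref{product-complete}.

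For pure subobjects, let $f\colon X \to Y$ be a pure monomorphism in $\Tcal$ with $Y \in \Add(T)$. Then $\yo f\colon \yo X \to \yo Y$ is a monomorphism between absolutely pure objects of $\Mod{\Tcal^\cpt}$, hence automatically a pure monomorphism there. By \cref{product-complete}, $Y$ is $\Sigma$-pure-injective in $\Tcal$, so $\yo Y$ is $\Sigma$-pure-injective in $\Mod{\Tcal^\cpt}$; the classical argument via the descending chain condition on pp-definable subgroups then shows that its pure submodule $\yo X$ is itself $\Sigma$-pure-injective, and being also absolutely pure it is injective. Hence $X$ is pure-injective in $\Tcal$, which forces the pure monomorphism $f$ to split, so $X$ is a direct summand of $Y$ and thus belongs to $\Add(T)$.

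For directed homotopy colimits, let $(X_i)_{i \in I}$ be a directed system in $\Add(T)$ and set $X = \hocolim X_i$. Since $\yo$ commutes with directed homotopy colimits by \cite{Lak20}, we have $\yo X = \varinjlim \yo X_i$ in $\Mod{\Tcal^\cpt}$, and this is a directed colimit of objects of $\Add(\yo T)$. Applying the classical result that $\Add(M)$ is closed under direct limits whenever $M$ is product-complete (valid for modules over a ringoid) to $M = \yo T$ then yields $\yo X \in \Add(\yo T)$. In particular $\yo X$ is injective, so $X$ is pure-injective in $\Tcal$.

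It remains to lift a retraction $\yo X \hookrightarrow \yo(T^{(\kappa)}) \twoheadrightarrow \yo X$ witnessing $\yo X \in \Add(\yo T)$ back to $\Tcal$; this is the most delicate step. Both $X$ and $T^{(\kappa)}$ are pure-injective, so the restricted Yoneda functor induces bijections on Hom-sets whose source or target is pure-injective, which allows the two morphisms of the retraction to be lifted individually, and functoriality of $\yo$ together with this Hom-bijection forces their composite to be $\mathrm{id}_X$. Consequently $X$ is a direct summand of $T^{(\kappa)}$, hence $X \in \Add(T)$, completing the verification.
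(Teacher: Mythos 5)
Your proof is correct, and the overall strategy matches the paper's: verify the Laking characterization of definable subcategories (closure under products, pure subobjects, directed homotopy colimits) and lean on the $\Sigma$-pure-injectivity and $\Add(T)=\Prod(T)$ facts from \cref{product-complete}. The difference is in the third closure condition. The paper, after noting that a pure monomorphism into $T^{(\varkappa)}$ splits, concludes closure under pure subobjects \emph{and} pure quotients (a pure epimorphism $T^{(\varkappa)}\to Q$ has pure-monomorphic fibre, which splits, so $Q$ is a summand of $T^{(\varkappa)}$) and then invokes the general derivator-theoretic fact that a product-closed, pure-subobject-closed and pure-quotient-closed subcategory is closed under directed homotopy colimits. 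You instead transport the directed system to $\Mod{\Tcal^\cpt}$ via $\yo$, apply the module analogue of the product-complete lemma there, and lift the resulting retraction back to $\Tcal$ using fully faithfulness of $\yo$ on pure-injective objects. This is a valid, more hands-on route, but it duplicates work: once closure under pure subobjects (hence pure quotients) is known, the lifting step is unnecessary. One small slip worth flagging: the bijection $\Hom_\Tcal(X,Y)\to\Hom(\yo X,\yo Y)$ requires the \emph{target} $Y$ to be pure-injective, not ``source or target''; in your argument both $X$ and $T^{(\varkappa)}$ happen to be pure-injective so nothing breaks, but the phrasing is imprecise.
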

\begin{proof}
By the definition of product-completeness, $\Add(T)$ is closed under products. Consider a pure monomorphism $K \to T^{(\varkappa)}$. Since $T^{(\varkappa)}$ is $\Sigma$-pure-injective by \cref{product-complete}, this map actually splits; this follows from \cite[Corollary~4.4.13]{Prest} and an application of $\yo$. It follows that $\Add(T)$ is closed under pure subobjects and pure quotients, then it is definable by the discussion above.
\end{proof}

\subsection{Topological endomorphism ring of a decent tilting complex}%
We need to briefly recall the recent theory of topological endomorphism rings of tilting complexes from \cite{Hrb22}, which builds upon the work of Positselski and Šťovíček on tilting modules \cite{PS21}. A silting object $T \in \Tcal$ is called \newterm{decent} provided that $\Def(T) \subseteq \Hcal_T$, note that since $\Add(T) \subseteq \Def(T)$, this implies that $T$ is tilting.

\begin{lemma}\label{prodcomp-decent}%
 Assume that $\Tcal$ underlies a compactly generated derivator. If $T$ is a product-complete tilting object of $\Tcal$ then $T$ is decent.
\end{lemma}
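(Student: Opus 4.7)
The plan is to observe that the product-complete hypothesis collapses the definable hull of $T$ onto $\Add(T)$ itself, after which decency follows by the definition of a tilting object.

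First, I would invoke \cref{product-complete-tilting}: under the standing assumption that $\Tcal$ underlies a compactly generated derivator, $\Add(T)$ is already a \emph{definable} subcategory of $\Tcal$ (closed under products by product-completeness, and closed under pure subobjects and directed homotopy colimits by the splitting of pure monomorphisms out of $\Sigma$-pure-injective objects, as recorded in \cref{product-complete}). In particular, $\Add(T)$ is a definable subcategory containing $T$.

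Next, since $\Def(T)$ is defined to be the \emph{smallest} definable subcategory of $\Tcal$ containing $T$, the minimality immediately gives $\Def(T) \subseteq \Add(T)$. Because $T$ is tilting, the definition of tilting provides $\Add(T) \subseteq \Hcal_T$. Concatenating these inclusions yields
\[
\Def(T) \subseteq \Add(T) \subseteq \Hcal_T,
\]
which is exactly the condition for $T$ to be decent.

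I do not expect any real obstacle here; the essential work has been front-loaded into \cref{product-complete} and \cref{product-complete-tilting}, and what remains is the formal minimality argument above. The only point worth sanity-checking is the (already noted) inclusion $\Add(T)\subseteq \Def(T)$, which together with $\Def(T)\subseteq \Add(T)$ yields the stronger equality $\Def(T)=\Add(T)$; but for the statement as given, only the latter inclusion is needed.
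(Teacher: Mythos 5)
Your proof is correct and follows the paper's argument exactly: cite \cref{product-complete-tilting} for the definability of $\Add(T)$, deduce $\Def(T)\subseteq\Add(T)$ by minimality (the paper states the equality $\Add(T)=\Def(T)$), and conclude via $\Add(T)\subseteq\Hcal_T$ from the definition of tilting. No gaps.
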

\begin{proof}
 By \cref{product-complete-tilting}, $\Add(T)$ is a definable subcategory. Since $T$ is tilting, we have $\Add(T) = \Def(T) \subseteq \Hcal_T$ and so $T$ is decent.
\end{proof}

Let $R$ be a ring and let $T$ be a decent tilting complex in $\D(\rMod R)$. By \cite{Hrb22}, being decent is in this setting equivalent to the character dual complex $C \leqdef T^+ = \RHom_{\Zbb}(T,\Qbb/\Zbb)$ being a cotilting complex in $\D(R\lMod)$. Further following \cite{Hrb22}, the endomorphism ring $\Sfr = \End_{\D(\rMod R)}(T)$ can be endowed with a natural linear topology of open left ideals called the \newterm{compact topology} making it into a complete and separated topological ring. Such a topological ring comes attached with two abelian categories: Positselski's category $\Ctra \Sfr$ of right contramodules \cite{contramodules}---a cocomplete abelian category with a projective generator, and the Grothendieck category $\Sfr\lDisc$ of left discrete modules. The latter category is nothing else then the full subcategory of $\Sfr\lMod$ consisting of modules which are torsion with respect to open left ideals; this is a hereditary pretorsion class inside $\Sfr\lMod$. Let us denote by $\Gamma\colon \Sfr\lMod \to \Sfr\lMod$ the right adjoint to the inclusion $\Sfr\lDisc \subseteq \Sfr\lMod$ postcomposed with the same inclusion, that is, the pretorsion preradical associated to this pretorsion class.

Let us assume in addition that $T$ is \newterm{good}, i.e.\ that $R \in \thick(T)$. Any silting complex is equivalent to a good one and decency is preserved by this \cite[Lemma~4.2]{Hrb22}. Let $A = \dgEnd_R(T)$ be the endomorphism dg~ring of $T$, so that $T$ is an $A$-$R$-dg bimodule. Since $T$ is tilting, $A$ is quasi-isomorphic to $\Sfr$, and there is a triangle equivalence $\epsilon\colon\D(A\ldgMod) \toeq \D(\Sfr\lMod)$ given by the zig-zag of soft truncation morphisms $A \to \tau^{\geq 0}A \gets H^0(A) = \Sfr$. Here, $\Sfr\lMod$ is identified with the heart of the standard t-structure in $\D(A\ldgMod)$. By \cite[Theorem~5.4]{Hrb22}, there is an equivalence $\psi: \D^\bdd(R\lMod) \cong \D^\bdd(\Sfr\lDisc): \psi^{-1}$, where $\psi$ is induced by corestriction of the functor $\epsilon \circ (T \otimes_R^\mathbf{L} -)$ and $\psi^{-1}$ is the restriction of $\RHom_A(T,-) \circ \epsilon^{-1}$. This equivalence further restricts to an equivalence of abelian categories
\[
	H^0(T \otimes_R^\mathbf{L} -):\Hcal_C\buildrel\cong\over\longrightarrow
		\Sfr\lDisc: \RHom_A(T,-).
\]
Similarly by \cite[Theorem~4.7]{Hrb22} we have an equivalence $\Hcal_T \cong \Ctra \Sfr$ which restricts to an equivalence $\Add(T) \cong \Ctrapr \Sfr$, the latter being the full subcategory of $\Ctra \Sfr$ consisting of all projective right $\Sfr$-contramodules.

We start by adding to the general results of \cite{Hrb22} that the linear topology on $\Sfr$ is in this situation actually a Gabriel topology of finite type, or equivalently, $\Sfr\lDisc$ is closed under extensions in $\Sfr\lMod$ (so it is a hereditary torsion class) whose torsion radical $\Gamma$ commutes with direct limits.

\begin{prop}\label{gamma-limits}%
 In the setting above, the pretorsion preradical $\Gamma\colon \Sfr\lMod \to \Sfr\lMod$ is a torsion radical and it commutes with direct limits. In particular, the left open ideals of $\Sfr$ form a Gabriel topology with a base of finitely generated left ideals of $\Sfr$. 
\end{prop}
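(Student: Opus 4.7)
The plan is to leverage the abelian equivalence $H^0(T \otimes_R^\mathbf{L} -)\colon \Hcal_C \toeq \Sfr\lDisc$ recalled just before the proposition, together with the Grothendieck property of $\Hcal_C$ from the final assertion of \cref{tilting-bounded}. As a preliminary observation, any hereditary pretorsion class in a module category is closed under all small colimits (being closed under quotients and coproducts); hence $\Sfr\lDisc$ is closed under direct limits computed in $\Sfr\lMod$, and these coincide with the direct limits in $\Hcal_C$ under the equivalence.

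To establish the torsion property of $\Gamma$ (equivalently, axiom (G4) of the topology, i.e., closure of $\Sfr\lDisc$ under extensions in $\Sfr\lMod$), I would take an exact sequence $0 \to M' \to M \to M'' \to 0$ in $\Sfr\lMod$ with $M', M'' \in \Sfr\lDisc$. Pulling $M', M''$ back through the equivalence to objects $N', N'' \in \Hcal_C$, one constructs an extension $N$ of $N''$ by $N'$ in the abelian category $\Hcal_C$, and applying $\psi$ yields an extension of $M''$ by $M'$ lying entirely in $\Sfr\lDisc$. Showing that this accounts for the original extension $M$ then reduces to identifying $\Ext^1_{\Sfr\lMod}(M'', M')$ with $\Ext^1_{\Hcal_C}(N'', N')$, exploiting that Yoneda $\Ext^1$ in both $\Sfr\lDisc$ and $\Hcal_C$ computes as degree-one Hom in the respective bounded derived category, and that these agree via the bounded derived equivalence $\psi$.

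For commutation of $\Gamma$ with direct limits, equivalently the finite type property of the topology (every open left ideal contains a finitely generated open one), I would translate via the equivalence: a cyclic discrete module $\Sfr/I$ for $I$ open corresponds to a cyclic object $N_I \in \Hcal_C$. Using that $T \in \K^\bdd(\Proj R)$ is a bounded complex of projectives and the goodness hypothesis $R \in \thick(T)$, one exhibits $N_I$ as an epimorphic image of a finitely presented object of $\Hcal_C$ (coming from a perfect complex in $\D^\bdd(R\lMod)$ via $\psi^{-1}$), whose kernel transports to a finitely generated open subideal of $I$. I expect this finite type step to be the main obstacle, as threading the structural finiteness of the tilting complex through the derived equivalence down to the lattice of open left ideals of $\Sfr$ is the technical heart of the proof; by contrast, closure under extensions reduces to a comparatively clean Yoneda Ext identification facilitated by the bounded derived equivalence.
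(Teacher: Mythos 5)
Your argument splits into two halves, and they fare quite differently.

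For closure under extensions, your plan is essentially the paper's: both reduce the claim to an identification of $\Ext^1$-groups via the bounded derived equivalence. What you leave implicit, however, is exactly the ingredient the paper leans on: one needs the forgetful functor $\iota\colon\D^\bdd(\Sfr\lDisc)\to\D^\bdd(\Sfr\lMod)$ to be \emph{fully faithful}, which is not a formal consequence of $\psi\colon\D^\bdd(R\lMod)\toeq\D^\bdd(\Sfr\lDisc)$ being an equivalence but follows from goodness of $T$ via \cite[Theorem~5.4]{Hrb22}. Without full faithfulness of $\iota$ you only get an injection $\Ext^1_{\Sfr\lDisc}(M'',M')\hookrightarrow\Ext^1_{\Sfr\lMod}(M'',M')$, which is always available for a hereditary pretorsion class and does not give closure under extensions. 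So that step is the right idea with the crucial lemma left unnamed.

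For commutation of $\Gamma$ with direct limits, I think your route is circular. You want to exhibit the object of $\Hcal_C$ corresponding to $\Sfr/I$ as a quotient of a finitely presented object of $\Hcal_C$ and then ``transport'' that to a finitely generated open subideal of $I$. But to carry out the transport, you must know that finitely presented objects of $\Hcal_C\cong\Sfr\lDisc$ coincide with discrete modules that are finitely presented over $\Sfr$ --- and this identification is precisely a consequence of $\Gamma$ commuting with direct limits. In fact, in the paper even the \emph{local finite presentability} of $\Hcal_C$ itself is only extracted after Proposition~\ref{gamma-limits} is proved (see the opening of the proof of Lemma~\ref{loc-coh-cop}(i)); a priori you do not have a description of $\fp{\Hcal_C}$ to work with, nor the claim that it generates. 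The paper avoids this trap by proving the direct-limit statement head-on: it identifies $\Gamma$ with $F\circ H^0_C\circ\RHom_A(T,-)$ via the right adjoint $\rho$ of $\iota$, and then observes that this composite commutes with direct limits because $T$ is compact in $\D(A\ldgMod)$ while $F$ and $H^0_C$ preserve directed (homotopy) colimits. The finite type statement for the Gabriel topology is then a \emph{corollary} of this, via \cite[\S XIII, Proposition~1.2]{SS75}, not an input to it.
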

\begin{proof}
 First, by \cite[Theorem~5.4]{Hrb22}, $T$ good implies that the forgetful functor $\D^\bdd(\Sfr\lDisc) \to \D^\bdd(\Sfr\lMod)$ is fully faithful. As a consequence, for any $M,N \in \Sfr\lDisc$ we have a natural isomorphism $\Ext^1_{\Sfr\lDisc}(M,N) \cong \Ext^1_{\Sfr}(M,N)$. It follows that $\Sfr\lDisc$ is an extension closed subcategory of $\Sfr\lMod$, thus it forms a torsion class, and so $\Gamma$ is a torsion radical.

Following the proof of \cite[Theorem~5.4]{Hrb22}, there is a commutative square of triangle functors
\[
\xymatrix{%
	\D^\bdd(\Hcal_C) \ar[d]_-{\real_C}^-{\cong}\ar[r]_-{\cong}^-{\D^\bdd(F)} &
		\D^\bdd(\Sfr\lDisc)\ar[d]^-{\iota} \cr
	\D^\bdd(R\lMod) \ar[r]^-{\psi} & \D^\bdd(\Sfr\lMod) 
}
\]
where $F = H^0(T \otimes_R^\mathbf{L}-)$ is the exact equivalence $\Hcal_C \toeq \Sfr\lDisc$ and $\D^\bdd(F)$ its extension to bounded derived categories, $\psi = \epsilon \circ (T \otimes_R^\mathbf{L} -)$, $\real_C$ is the realization functor with respect to the t-structure $(\Perp{\leq 0}C,\Perp{>0}C)$ and a suitable f-enhancement, and $\iota\colon\D^\bdd(\Sfr\lDisc) \to \D^\bdd(\Sfr\lMod)$ is the forgetful functor, which we know to be fully faithful. By taking right adjoints, we see that $\iota$ has a right adjoint naturally equivalent to the functor $\rho = \D^\bdd(F) \circ {\real_C^{-1}} \circ \phi$, where $\phi = \RHom_A(T,-) \circ \epsilon^{-1}$ is the right adjoint to $\psi$. 
  
We claim that $\Gamma \cong \iota \circ H^0(\rho)$. Let $X \in \D^\bdd(\Sfr\lDisc)$ and $M \in \Sfr\lMod$. By the adjunction, we have an isomorphism $\Hom_{\D^\bdd(\Sfr)}(X,M) \cong \Hom_{\D^\bdd(\Sfr\lDisc)}(X,\rho M)$. It follows that $\Hom_{\D^\bdd(\Sfr\lDisc)}(N[i],\rho M) = 0$ for any $N \in \Sfr\lDisc$ and $i>0$, which implies that $H^i(\rho M) = 0$ for all $i<0$. Then for any $N \in \Sfr\lDisc$ we have 
\begin{align*}
	\Hom_{\Sfr}(N,M) &\cong \Hom_{\D^\bdd(\Sfr\lDisc)}(N,\rho M) \cr
	&\cong \Hom_{\Sfr\lDisc}(N,\tau^{\leq 0}(\rho M)) = \Hom_{\Sfr\lDisc}(N,H^0(\rho M)).
\end{align*}
In other words, $H^0(\rho)\colon\Sfr\lMod \to \Sfr\lDisc$ is the right adjoint to the inclusion $\Sfr\lDisc \subseteq \Sfr\lMod$, and so $\iota \circ H^0(\rho)$ is equivalent to $\Gamma$.

Finally, we have for any $M \in \Sfr\lMod$ that 
\begin{align*}
	H^0(\rho(M))=H^0(\D^\bdd(F)(\real_C^{-1}\phi(M))) &\cong F(H^0_C(\phi(M))) \cr
	&= F(H^0_C(\RHom_A(T,M))) 
\end{align*}
(here we use \cite[Theorem~3.11(i)]{PV18}). The functor 
\[
	F(H^0_C(\RHom_A(T,-)))\colon \Sfr\lMod\longrightarrow \Sfr\lDisc
\] 
clearly commutes with direct limits in $\Sfr\lMod$, because $T$ is a compact object in $\D(A\ldgMod)$ and direct limits in $\Sfr\lMod$ coincide with directed homotopy colimits computed inside $\D(A\ldgMod)$, while both the exact equivalence $F$ and the cohomological functor $H^0_C$ are known to preserve directed (homotopy) colimits. Then also $\Gamma$ commutes with direct limits, as the inclusion $\Sfr\lDisc \subseteq \Sfr\lMod$ clearly preserves direct limits.

By \cite[\S VI.5]{SS75}, the left open ideals form a Gabriel topology, and then by \cref{gamma-limits} and \cite[\S XIII, Proposition~1.2]{SS75}, the topology has a base of finitely generated left ideals of $\Sfr$.
\end{proof}

\subsection{Locally coherent and locally coperfect abelian categories}\label{ss:loccoh}%
Recall that an essentially small abelian category $\Acal$ is \newterm{noetherian} (resp.\ \newterm{artinian}) if every object in $\Acal$ is noetherian (resp., artinian), which means that it satisfies a.c.c.\ (resp., d.c.c.) on its subobjects. Let $\Ccal$ be a locally finitely presentable abelian category and let $\fp \Ccal$ denote the (essentially small) full subcategory of finitely presentable objects of $\Ccal$. We recall that this automatically renders $\Ccal$ a Grothendieck category. We call $\Ccal$ \newterm{locally noetherian} if $\Ccal$ admits a generating set of noetherian objects. It can be easily seen that $\Ccal$ is locally noetherian if and only if $\fp \Ccal$ is a noetherian abelian category. In particular, $\Ccal$ is \newterm{locally coherent}, which by definition means that $\fp \Ccal$ is itself an abelian category. A natural question is what properties of $\Ccal$ characterize the case in which $\fp{\Ccal}$ is an artinian abelian category. It turns out that this occurs precisely when $\Ccal$ is locally coherent and \newterm{locally coperfect}. The latter property means that there is a set of generators in $\Ccal$ which are \newterm{coperfect}, which means that they satisfy d.c.c.\ on \textit{finitely generated} subobjects. This is in fact equivalent to every object of $\Ccal$ being coperfect. For details, we refer the reader to \cite{Roos69} and \cite{PS19b}.

We have the following result of Roos \cite{Roos69}, which can be seen as a large category version of the obvious explicit duality $\Acal \mapsto \Acal^\op$ between noetherian and artinian abelian categories. Given an essentially small abelian category $\Acal$, we let $\Lex(\Acal)$ be the abelian category of all left exact additive functors $\Acal \to \Mod \Zbb$; this is a locally coherent abelian category which satisfies $\fp{\Lex(\Acal)} \cong \Acal$, see \cite{Roos69} and also \cite[Proposition~13.2]{PS19b}.

\begin{theorem}[{\cite{Roos69}, \cite{PS19b}}]\label{Roos}
There is a bijective correspondence
\[
	\left\lbrace\begin{array}{@{}c@{}}%
		\text{Locally noetherian} \\
		\text{abelian categories} \\
		\text{up to equivalence}
	\end{array}\right\rbrace
	\buildrel1\mathchar`-1\over\longleftrightarrow 
	\left\lbrace\begin{array}{@{}c@{}}
		\text{Locally coherent and} \\
		\text{locally coperfect} \\
		\text{abelian categories} \\
	\text{up to equivalence}
	\end{array}\right\rbrace
\]
 which is defined in both directions by the assignment $\Ccal \mapsto \Lex(\fp{\Ccal}^{\op})$.
 
 Furthermore, any locally coherent, locally coperfect abelian category $\Ccal$ is equivalent to $\Sfr\lDisc$ for a suitable complete and separated left topological ring $\Sfr$.
\end{theorem}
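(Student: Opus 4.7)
The plan is to reduce the bijective correspondence to the obvious involutive duality $\Acal \mapsto \Acal^{\op}$ between essentially small noetherian and artinian abelian categories. The key input (Roos; see \cite[Proposition~13.2]{PS19b}) is that for any essentially small abelian $\Acal$, the category $\Lex(\Acal)$ is locally coherent with $\fp{\Lex(\Acal)} \cong \Acal$, and conversely every locally coherent Grothendieck category $\Ccal$ is reconstructed as $\Ccal \cong \Lex(\fp{\Ccal}^{\op})$. Hence both classes in the statement are parametrized, up to equivalence, by their subcategory of finitely presented objects, and the problem reduces to a classification of $\fp{\Ccal}$ in each case.

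I first establish two characterizations at the level of $\fp{(-)}$: (a) $\Ccal$ is locally noetherian if and only if $\fp{\Ccal}$ is noetherian as an abelian category; (b) $\Ccal$ is locally coherent and locally coperfect if and only if $\fp{\Ccal}$ is artinian as an abelian category. Part (a) is classical. For (b), in a locally coherent category the poset of subobjects of $X \in \fp{\Ccal}$ within the abelian category $\fp{\Ccal}$ coincides with the poset of finitely generated subobjects of $X$ inside $\Ccal$ (every finitely generated subobject of a finitely presented object is itself finitely presented), so artinianness of $X$ in $\fp{\Ccal}$ matches coperfectness of $X$ in $\Ccal$. Combined with the Roos reconstruction, the assignment $\Ccal \mapsto \Lex(\fp{\Ccal}^{\op})$ interchanges the two classes (since $\Acal$ is noetherian iff $\Acal^{\op}$ is artinian), and a double application yields $\Lex((\fp{\Ccal}^{\op})^{\op}) = \Lex(\fp{\Ccal}) \cong \Ccal$, establishing bijectivity.

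For the representation $\Ccal \cong \Sfr\lDisc$, fix a coperfect generator $P$ of $\Ccal$ --- for example, a coproduct over a skeleton of $\fp{\Ccal}$ --- and set $\Sfr = \End_\Ccal(P)^{\op}$, equipped with the linear topology whose basic open left ideals are the annihilators $J_{P'} = \{s \in \Sfr \mid s_{\restriction P'} = 0\}$ of finitely generated subobjects $P' \subseteq P$. Standard arguments show $\Sfr$ is complete and separated, and the functor $\Hom_\Ccal(P,-)\colon\Ccal \to \Sfr\lMod$ factors through $\Sfr\lDisc$: any morphism $P \to X$ is controlled by its restrictions to finitely generated subobjects, which are annihilated by some open left ideal.

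The main obstacle is verifying that $\Hom_\Ccal(P,-)$ is an equivalence onto $\Sfr\lDisc$. Since $P$ is in general not projective, the classical Gabriel--Popescu theorem does not apply directly. The strategy I would follow is to compare both sides through their common subcategory of finite-length objects: on the one hand, a Yoneda-type embedding $\fp{\Ccal} \hookrightarrow \Sfr\lDisc$ identifies objects of $\fp{\Ccal}$ with finitely generated (equivalently, finite-length, by artinianness) discrete $\Sfr$-modules, yielding $\fp{\Sfr\lDisc} \cong \fp{\Ccal}$; on the other hand, both $\Ccal$ and $\Sfr\lDisc$ are reconstructed from their finitely presented parts via the first part of the theorem. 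The two descriptions then coincide under $\Lex(\fp{\Ccal}^{\op})$, producing the desired equivalence.
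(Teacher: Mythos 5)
The paper gives no proof of this theorem: it is quoted as a combination of results of Roos \cite{Roos69} and Positselski--Šťovíček \cite{PS19b}, so there is no argument of the authors' to compare yours against. Evaluating the proposal on its own merits: the first half (the bijective correspondence) is essentially correct. Reducing to $\fp(-)$, using that $\Ccal$ is locally noetherian iff $\fp\Ccal$ is noetherian and (in a locally coherent $\Ccal$) that $\Ccal$ is locally coperfect iff $\fp\Ccal$ is artinian, and then invoking the Gabriel--Roos reconstruction $\Ccal \cong \Lex(\fp\Ccal)$ is exactly the ``large category version of $\Acal \mapsto \Acal^{\op}$'' the paper alludes to. (One typo: you write the reconstruction once as $\Ccal \cong \Lex(\fp{\Ccal}^{\op})$ and once, correctly for the paper's convention, as $\Ccal \cong \Lex(\fp{\Ccal})$.)

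The second half has a genuine gap. Setting $P = \coprod_{Y \in \fp\Ccal} Y$ and $\Sfr = \End_\Ccal(P)^{\op}$, the functor $\Hom_\Ccal(P,-)$ does \emph{not} factor through $\Sfr\lDisc$. The most basic counterexample is $X = P$ itself: the identity $\mathrm{id}_P \in \Hom_\Ccal(P,P)$ has zero annihilator ideal in $\Sfr$, so it is annihilated by an open ideal only when the topology is discrete, which forces $P$ to be finitely presented --- false whenever $\fp\Ccal$ has infinitely many isomorphism classes. More generally, for fp $X$ one has $\Hom_\Ccal(P,X) = \prod_{Y}\Hom_\Ccal(Y,X)$, and an element with infinitely many nonzero components cannot have an open annihilator among ideals of the form $J_{P'}$ with $P'$ finitely generated. (There is also a handedness slip: the $J_{P'}$ are left ideals of $\End(P)$, hence \emph{right} ideals of $\Sfr = \End(P)^{\op}$.) Conceptually, the obstruction is that by Gabriel--Popescu, $\Hom_\Ccal(P,-)$ realizes $\Ccal$ as a Giraud (torsion-free, closed) subcategory of $\Sfr\lMod$, whereas $\Sfr\lDisc$ is a hereditary \emph{torsion} subcategory; these are opposite sides of a localization, not the same thing. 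The Roos/Positselski--Šťovíček construction does not use a generator of $\Ccal$ at all. It passes to the locally noetherian partner $\Dcal = \Lex(\fp{\Ccal}^{\op})$, fixes an injective cogenerator $J$ of $\Dcal$, and takes $\Sfr$ to be the endomorphism ring of $J$ with its finite topology (annihilators of finitely generated subobjects of $J$). One then checks that $\Hom_\Dcal(-,J)$ restricts to an equivalence $(\fp\Dcal)^{\op} \to \fp(\Sfr\lDisc)$, i.e.\ $\fp(\Sfr\lDisc) \cong \fp\Ccal$, and concludes $\Ccal \cong \Sfr\lDisc$ by the reconstruction from the first part. Your closing idea of matching the two sides through their finitely presented parts is the right one, but it needs to be run with this $\Sfr$, not with $\End(P)^{\op}$.
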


It turns out that the product-completeness of $T$ characterizes the situation in which the induced cotilting heart $\Hcal_C$ is locally coherent and locally coperfect. We recall that the categories $\Ctra \Sfr$ of right $\Sfr$-contramodules and $\Sfr\lDisc$ of left discrete $\Sfr$-modules are paired by a bifunctor $- \ctrtensor_\Sfr -:\Ctra \Sfr \times \Sfr\lDisc \to \Mod \Zbb$ called the \newterm{contratensor product}, which shares many properties with the usual tensor functor in case of ordinary module categories, see \cite[\S 1]{PS19b} for the basic reference. In the case when the forgetful functor $\Ctra \Sfr \to \Mod \Sfr$ is fully faithful, the contratensor product $- \ctrtensor_\Sfr -$ actually coincides with the restriction of the ordinary tensor product $- \otimes_\Sfr -$, this happens in our setting whenever the tilting complex $T$ is good, see \cite[Lemma~7.11]{PS21} and \cite[Lemma 5.1]{Hrb22}. A right $\Sfr$-contramodule $\Mfr$ is \newterm{flat} if the functor $\Mfr \ctrtensor_\Sfr -: \Sfr\lDisc \to \Mod \Zbb$ is exact, any projective right $\Sfr$-contramodule is flat \cite[\S 14]{PS19b}.

\begin{theorem}\label{loc-coh-cop}%
Let $R$ be a ring, $T$ a tilting complex in $\D(\Mod R)$, and $C = T^+$ its dual cosilting complex in $\D(R\lMod)$. Then:

\begin{enumerate}[(i)]
\item If $T$ is product-complete then $T$ is decent and $\Hcal_C$ is a locally coherent and locally coperfect abelian category.

\item If $T$ is decent and $\Hcal_C$ is locally coherent and locally coperfect then $T$ is product-complete.
\end{enumerate}
\end{theorem}
\begin{proof}
Since product-completeness and decency of $T$, as well as the equivalence class of $\Hcal_C \cong \Sfr\lDisc$, are invariant the under change of the tilting complex $T$ up to equivalence, we can without loss of generality assume that $T$ is good \cite[Lemma~4.2, \S 6.2]{Hrb22}.

\subparagraph{(i)} Recall first from \cref{product-complete-tilting} that $T$ is decent so that $C$ is cotilting.

Let us first show that $\Hcal_C \cong \Sfr\lDisc$ is locally coherent. By \cref{gamma-limits}, it is locally finitely presentable. Also by \cref{gamma-limits}, $\Gamma$ commutes with direct limits (as observed above), and so the finitely presentable objects in $\Sfr\lDisc$ are precisely the objects which are finitely presented as left $\Sfr$-modules. Let $0 \to K \to M \to N$ be an exact sequence with $M,N$ finitely presentable objects in $\Sfr\lDisc$. For any cardinal $\varkappa$, consider the commutative diagram
\[
\xymatrix{%
  0 \ar[r] &  \Sfr^\varkappa \otimes_\Sfr K \ar[r]\ar[d] &
  	\Sfr^\varkappa \otimes_\Sfr M \ar[r]\ar[d] &
  	\Sfr^\varkappa \otimes_\Sfr N \ar[d] \cr
  0 \ar[r] & K^\varkappa \ar[r] & M^\varkappa \ar[r] & N^\varkappa
}
\]
Here, the vertical arrows are the natural ones; the rows of the diagram are exact, because the tensor product $- \otimes_\Sfr -$ here coincides with the contratensor product $- \ctrtensor_\Sfr -$ and because $\Sfr^\varkappa$ is a flat right $\Sfr$-contramodule. The latter fact follows from $\Prod(\Sfr) \subseteq \Add(\Sfr)$ in $\Ctra \Sfr$ which reflects the assumption $\Prod(T) \subseteq \Add(T)$ in $\D(\rMod R)$ (note that both the $\Add$- and the $\Prod$-closure of the projective object $T$ is computed the same in $\D(\rMod R)$ and in the abelian category $\Hcal_T \cong \Ctra \Sfr$). Since $M,N$ are finitely presented left $\Sfr$-modules, the two rightmost vertical maps are isomorphisms, and therefore so is the leftmost vertical map. It follows by a standard argument (\cite[Proposition~10.89.3.]{Stacks}) that $K$ is a finitely presented left $\Sfr$-module, and therefore it is a finitely presentable object of $\Sfr\lDisc$. We proved that $\Sfr\lDisc$ is locally coherent.

By \cref{product-complete-tilting}, $\Add(T)$ is closed under directed homotopy colimits. It follows from \cite[Lemma~7.3]{SSV17} that directed homotopy colimits in $\Add(T)$ coincide with direct limits of objects in $\Add(T)$ computed in the heart $\Hcal_T$. In view of the equivalence $\Hcal_T \cong \Ctra \Sfr$ which restricts to $\Add(T) \cong \Ctrapr{\Sfr}$, we see that the category of projective right $\Sfr$-contramodules is closed under direct limits computed in $\Ctra \Sfr$. By \cite[Theorem~14.1]{PS19b}, the topological ring $\Sfr$ is therefore topologically right perfect and $\Sfr\lDisc$ is locally coperfect by \cite[Theorem~14.4]{PS19b}.

\subparagraph{(ii)} By definition \cite{PS19b}, $\Sfr$ is topologically left coherent. Using \cite[Theorem~14.12]{PS19b} and \cite[Theorem~14.1]{PS19b}, we see that $\Sfr$ is also topologically right perfect, which in particular means that $\Ctrapr{\Sfr}$ coincides with the class of flat right $\Sfr$-contramodules. It is enough to see that this class is closed under products. Let $(\mathfrak{F}_i \mid i \in I)$ be a collection of flat right $\Sfr$-contramodules. If $N \in \Sfr\lDisc$ is a coherent left discrete $\Sfr$-module then we have the following isomorphisms:
\[
	(\prod_{i \in I}\mathfrak{F}_i) \ctrtensor_\Sfr N \cong
		(\prod_{i \in I}\mathfrak{F}_i) \otimes_\Sfr N \cong
		\prod_{i \in I}(\mathfrak{F}_i \otimes_\Sfr N) \cong
		\prod_{i \in I}(\mathfrak{F}_i \ctrtensor_\Sfr N);
\]
the first and last one follow again by $T$ begin good and \cite[Lemma~5.1]{Hrb22}, while the middle one follows since $N$ is a finitely presented left $\Sfr$-module. A standard argument using local coherence of $\Sfr\lDisc$ then shows that $\prod_{i \in I}\mathfrak{F}_i$ is a flat right $\Sfr$-contramodule. Indeed, by \cite[Lemma~5.9]{Kr98b}, any short exact sequence in $\Sfr\lDisc$ can be written as a direct limit of short exact sequences in $\fp{\Sfr\lDisc}$, and so the functor $\prod_{i \in I}\mathfrak{F}_i \ctrtensor_\Sfr -: \Sfr\lDisc \to \Mod \Zbb$ is exact.
\end{proof}

\section{Commutative noetherian rings and codimension filtrations}%
\label{s:comnoeth}%
From now on, $R$ is a commutative noetherian ring with Zariski spectrum $\Spec R$. We also abbreviate $\D(R)\leqdef \D(\rMod R)$, $\D^\bdd(R)\leqdef \D^\bdd(\rMod R)$, and $\D^\bdd_\fg(R)\leqdef\D^\bdd(\mod R)$; note that the last category is well-known to be equivalent to the full subcategory of $\D^\bdd(R)$ consisting of complexes with finitely generated cohomology.

\subsection{Compactly generated t-structures in $\D(R)$}%
\label{ss:ajs}%
Alonso, Jerem\'\i as and Saor\'\i n \cite{ATJLS10} established a bijection between compactly generated t-structures in $\D(R)$ and sp-filtrations of the Zariski spectrum $\Spec R$. An \newterm{sp-filtration} is a function $\Phi$ assigning to every integer $n \in \Zbb$ a specialization-closed subset $\Phi(n)$ of $\Spec R$, that is, an upper subset of the poset $(\Spec R, \subseteq)$, such that $\Phi(n-1) \supseteq \Phi(n)$ for each $n \in \Zbb$. If $\Phi$ is an sp-filtration, then the corresponding t-structure $(\Ucal,\Vcal)$ is determined by 
\begin{align*}
	\Ucal &= \{X \in \D(R) \mid \Supp(H^n(X)) \subseteq \Phi(n)\ \forall n \in \Zbb\}, \cr
	\intertext{where $\Supp(M) = \{\pp \in \Spec R \mid M_\pp \neq 0\}$, and}
	\Vcal &= \{X \in \D(R) \mid \RGamma_{\!\Phi(n)}(X) \in \D^{> n}\ \forall n \in \Zbb\},
\end{align*}
where $\RGamma_{\!\Phi(n)}\colon\D(R) \to \D(R)$ is the right derived functor of the torsion functor $\Gamma_{\!\Phi(n)}\colon\Mod R \to \Mod R$ with respect to the support $\Phi(n)$. Note that the coaisle $\Vcal$ can also be described using depth (see e.g. \cite[\S 2.3]{HNS}):
\[
	\Vcal = \{X \in \D(R) \mid
		\depth_{R_\qq}X_\qq > n\ \forall \qq \in \Phi(n)\ \forall n \in \Zbb\}.
\]

A t-structure $(\Ucal,\Vcal)$ is \newterm{non-degenerate} if $\bigcap_{n \in \Zbb}\Ucal[n] = 0 = \bigcap_{n \in \Zbb}\Vcal[n]$ and it is \newterm{intermediate} if $\D^{< n} \subseteq \Ucal \subseteq \D^{< m}$ for some integers $n \leq m$; any intermediate t-structure is non-degenerate. Both these properties of a compactly generated t-structure in $\D(R)$ are easily read from the corresponding sp-filtration $\Phi$: we call $\Phi$ non-degenerate if $\bigcup_{n \in \Zbb}\Phi(n) = \Spec R$ and $\bigcap_{n \in \Zbb}\Phi(n)=\emptyset$, while we call $\Phi$ intermediate if $\Phi(n) = \Spec R$ and $\Phi(m)=\emptyset$ for some integers $n<m$, see \cite[Theorem~3.8]{AHH19}.

The t-structures in $\D(R)$ induced by pure-injective cosilting objects are precisely the non-degenerate compactly generated t-structures \cite{HN21}. Moreover, t-structures in $\D(R)$ induced by cosilting complexes are precisely the intermediate compactly generated t-structures. Furthermore, the assignment $T \mapsto T^+$ yields a bijection between the equivalence classes of silting and cosilting complexes. See \cite[Theorem~3.8]{AHH19}.

\subsection{Restrictable t-structures}%
Recall that a t-structure $(\Ucal,\Vcal)$ in $\D(R)$ is \newterm{restrictable} if the restricted pair $(\Ucal \cap \D^\bdd_\fg(R),\Vcal \cap \D^\bdd_\fg(R))$ yields a t-structure in the triangulated category $\D^\bdd_\fg(R)$. A non-degenerate sp-filtration $\Phi$ satisfies the \newterm{weak Cousin condition} if whenever $\pp \subsetneq \qq$ is a minimal inclusion of primes and $\qq \in \Phi(n)$ then $\pp \in \Phi(n-1)$. In the following, we gather several important results about restrictable t-structures in $\D(R)$.

\begin{theorem}\label{restrictable-props}%
Let $R$ be a commutative noetherian ring and $(\Ucal,\Vcal)$ a compactly generated t-structure in $\D(R)$ with heart $\Hcal$ corresponding to an sp-filtration $\Phi$. Then:

\begin{enumerate}[(i)]
\item if $(\Ucal,\Vcal)$ is intermediate then $(\Ucal,\Vcal)$ is restrictable if and only if $(\Ucal,\Vcal)$ is cotilting and $\Hcal$ is locally coherent;

\item if $(\Ucal,\Vcal)$ is restrictable then $\Phi$ satisfies the weak Cousin condition;

\item if $R$ is CM-excellent (see \cref{s:cmexc}) then the converse implication of (ii) holds as well;

\item The restriction assignment induces a bijection
\[
	\left\lbrace\begin{array}{@{}c@{}}
		\text{Restrictable compactly generated} \\
		\text{t-structures in $\D(R)$}
	\end{array}\right\rbrace
	\buildrel1\mathchar`-1\over\longleftrightarrow
	\left\lbrace\begin{array}{@{}c@{}}
		\text{t-structures in} \\
		\text{$\D^\bdd_\fg(R)$}
	\end{array}\right\rbrace.
\]
\end{enumerate}
\end{theorem}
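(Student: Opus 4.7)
The plan is to prove the four parts in the order (iv), (i), (ii), (iii), since the later parts rely on the setup of the earlier ones and (iii) is by far the main technical obstacle. For (iv), the bijection is restriction in one direction and coproduct-plus-suspension closure in the inverse direction: given $(\Ucal_0,\Vcal_0)$ in $\D^\bdd_\fg(R)$, one defines $\Ucal$ as the smallest subcategory of $\D(R)$ containing $\Ucal_0$ and closed under suspensions and coproducts. The pair $(\Ucal,\Ucal\Perp{\leq 0})$ is a compactly generated t-structure (with compact generators coming from perfect approximations of objects of $\Ucal_0$, using that $R$ is noetherian), and its restriction back to $\D^\bdd_\fg(R)$ recovers $(\Ucal_0,\Vcal_0)$ by matching support data via the ATJLS correspondence from \cref{ss:ajs}. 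Mutual inversion reflects the principle that a compactly generated aisle is determined by its intersection with $\D^\bdd_\fg(R)$ whenever that intersection is a t-structure.

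For (i), any intermediate compactly generated $\Tbb$ is induced by a cosilting complex $C$, with $\Hcal_C$ a Grothendieck category admitting an injective cogenerator (by \cref{ss:ajs} and \cref{tilting-bounded}). What remains is to identify restrictability with the conjunction ``$C$ is cotilting and $\Hcal_C$ is locally coherent''. If $\Tbb$ restricts, the heart of the restriction is $\Hcal_C \cap \D^\bdd_\fg(R)$, and the derived equivalence $\D^\bdd(R\lMod) \cong \D^\bdd(\Hcal_C)$ of \cref{tilting-bounded}(i') identifies $\fp{\Hcal_C}$ with this intersection, which forces local coherence of $\Hcal_C$ and, via the identification of $\Add$ and $\Prod$ closures in the equivalent categories, that $C$ is cotilting. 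Conversely, local coherence plus cotilting allow one to transport the standard t-structure on $\D^\bdd(\fp{\Hcal_C})$ back through the equivalence to produce a restriction of $\Tbb$ to $\D^\bdd_\fg(R)$.

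For (ii), the weak Cousin condition is tested against cyclic modules $R/\pp$: if $\pp\subsetneq\qq$ is a minimal prime inclusion with $\qq\in\Phi(n)$ but $\pp\notin\Phi(n-1)$, then the depth-based description $\Vcal = \{X \mid \depth_{R_\qq}X_\qq > n\ \forall \qq\in\Phi(n)\ \forall n\}$ from \cref{ss:ajs} forces the truncation $\tau^\Vcal(R/\pp)$ to carry nonzero local cohomology at $\pp$ in arbitrarily low degree, which contradicts restrictability. The precise argument localises at $\qq$ and exploits the local cohomology long exact sequence associated to the one-step specialisation $\pp\subsetneq\qq$ to propagate depth obstructions from $\qq$ to $\pp$.

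For (iii), the converse under CM-excellence is the principal technical obstacle. The plan is to invoke Kawasaki's characterisation \cite{Kaw08} of CM-excellent rings as homomorphic images of Cohen--Macaulay rings, which provides a codimension function on $\Spec R$ and Cousin-type resolutions compatible with any specialisation-closed subset. Given $\Phi$ satisfying the weak Cousin condition, one filters any $X\in\D^\bdd_\fg(R)$ according to the codimension of its cohomological support and constructs $\tau^\Vcal X$ layer by layer; the weak Cousin condition ensures that the successive graded pieces remain finitely generated and bounded. The principal difficulty lies in controlling depth, finite generation and support simultaneously at every prime, which is exactly what CM-excellence permits via the existence of a Cohen--Macaulay dualizing complex on each formal fibre; where the direct construction becomes unwieldy, I would instead leverage the product-complete tilting complex produced in \cref{prod-add} together with \cref{loc-coh-cop} and part (i) of this theorem to conclude restrictability abstractly from local coherence of the associated cotilting heart.
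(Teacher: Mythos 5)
The paper does not reprove any of these four statements; it cites each of them: (i) to \cite[Cor.~6.17]{PV20}, \cite[Thm.~6.3]{Sao17} (for $\Rightarrow$) and \cite[Thm.~3.13]{HP} (for $\Leftarrow$); (ii) to \cite[Cor.~4.5]{ATJLS10}; (iii) to Takahashi \cite{Tak22}; and (iv) to \cite[Cor.~3.12]{ATJLS10} or \cite[Cor.~4.2]{MZ23}. Your proposal instead tries to sketch direct proofs of all four, which is a very different — and far more ambitious — undertaking. Each of the cited results is itself a substantial theorem whose proof occupies several pages in its source paper, and your sketches, while generally gesturing in the right directions, are nowhere near the level of detail needed to stand independently. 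In particular, the "coproduct-plus-suspension closure" description of the inverse bijection in (iv) needs non-trivial verification that the closure is a compactly generated aisle and that restriction is mutually inverse; the argument for (i) requires the specific machinery of \cite{HP} and \cite{Sao17} relating restrictability to local coherence of the heart; and the depth-obstruction argument for (ii), while correct in spirit, hides the actual content of \cite[Cor.~4.5]{ATJLS10}.

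The more serious issue is (iii). Your primary plan ("filter $X\in\D^\bdd_\fg(R)$ by codimension of support and build $\tau^\Vcal X$ layer by layer") is a gesture at Takahashi's proof, which is a deep recent theorem and cannot be compressed into a paragraph. Your fallback plan — invoking \cref{prod-add}, \cref{loc-coh-cop}, and part (i) — has a genuine scope gap: that chain establishes restrictability only for the t-structure associated to a \emph{codimension} filtration, whereas (iii) asserts restrictability for every weak-Cousin filtration. Weak Cousin is much weaker than the slice condition; for example the standard t-structure, HRS-type truncations, and many other filtrations satisfy weak Cousin without being codimension filtrations, and the fallback argument says nothing about those. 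Furthermore, a CM-excellent ring need not even carry a codimension function globally, in which case \cref{prod-add} does not apply at all, yet (iii) still makes an assertion about whatever weak-Cousin filtrations exist. So the fallback proves only a strictly smaller statement, and without (iii) in full generality the later \cref{RTthm} and parts of \cref{restrictable-flat} would not follow.
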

\begin{proof}\leavevmode
\subparagraph{(i)} The direct implication is \cite[Corollary~6.17]{PV20} and \cite[Theorem~6.3]{Sao17}, while the converse is proved in \cite[Theorem~3.13]{HP}.

\subparagraph{(ii)} This is \cite[Corollary~4.5]{ATJLS10}.

\subparagraph{(iii)} This has recently been proved by Takahashi \cite{Tak22}, the special case when $R$ has a classical dualizing complex was proved in \cite[\S 6]{ATJLS10}.

\subparagraph{(iv)} See \cite[Corollary~3.12]{ATJLS10} or more generally \cite[Corollary~4.2]{MZ23}.
\end{proof}

\subsection{Module-finite algebra extensions}%
Let $\lambda\colon R \to A$ be a noetherian commutative $R$-algebra and let $\Spec{\lambda}\colon\Spec A \to \Spec R$ denote the induced map on spectra. Given an sp-filtration $\Phi$ on $\Spec R$ we can define an induced sp-filtration $\lambda\Phi$ on $\Spec A$ by setting $\lambda\Phi(n) = \Spec{\lambda}^{-1} (\Phi(n))$ for all $n \in \Zbb$. This way, $\lambda$ transfers compactly generated t-structures in $\D(R)$ to compactly generated t-structures in $\D(A)$, see \cite[\S 5]{BHM} for details. More explicitly, let $(\Ucal,\Vcal)$ be the compactly generated t-structure in $\D(R)$ corresponding to $\Phi$ and let $(\lambda^* \Ucal,\lambda^! \Vcal)$ denote the compactly generated t-structure in $\D(A)$ corresponding to $\lambda\Phi$ (the notation is justified\footnote{One can in fact show that the t-structure $(\lambda^* \Ucal,\lambda^! \Vcal)$ is generated by the image of (compact objects in) $\Ucal$ under $\lambda^*$, this follows essentially from \cite[\S 4, \S 5]{BHM}.} in \cref{algebra-restr} below). Then 
\begin{align*}
	\Ucal &= \{X \in \D(R) \mid
		\Supp H^n(X) \subseteq \Phi(n)\ \forall n \in \Zbb\} \cr
	\noalign{\hbox{and}}
	\lambda^* \Ucal &= \{X \in \D(A) \mid
		\Supp H^n(X) \subseteq \lambda\Phi(n)\ \forall n \in \Zbb\}.
\end{align*}
Let 
$$\lambda_*  \colon \D(A) \to \D(R)$$ 
denote the forgetful functor, which admits the left adjoint 
$$\lambda^* = (- \otimes_R^\mathbf{L} A) \colon \D(R) \to \D(A)$$ 
and the right adjoint $$\lambda^! = \RHom_R(A,-) \colon \D(R) \to \D(A).$$ 
An $R$-algebra $\lambda\colon R \to A$ is \newterm{module-finite} if $A$ is finitely generated as an $R$-module. Note that if $\lambda$ is module-finite and $Y \in \D(A)$ then $Y \in \D^\bdd_\fg(A)$ if and only if $\lambda_* Y \in \D^\bdd_\fg(R)$.

\begin{lemma}\label{algebra-restr}%
Let $\lambda\colon R \to A$ be a commutative module-finite $R$-algebra. In the notation set above, the following hold for any $X \in \D(R)$ and $Y \in \D(A)$:

\begin{enumerate}[(i)]
 \item $Y \in \lambda^* \Ucal$ if and only if $\lambda_* Y \in \Ucal$.

 \item If $X \in \Ucal$ then $\lambda^* X \in \lambda^* \Ucal$.
 
 \item If $X \in \Vcal$ then $\lambda^! X \in \lambda^! \Vcal$.
\end{enumerate} 
\end{lemma}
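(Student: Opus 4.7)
The plan is to establish (i) via the standard support transfer along an integral extension, and then to deduce (ii) by using (i) to reduce to a claim about $R$-supports that can be verified prime by prime.

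The argument for (i) rests on the identity
\[
 \Supp_R M^\fgt = \lambda^{\ast}(\Supp_A M)
\]
valid for any $A$-module $M$ under the module-finite (hence integral) extension $\lambda\colon R\to A$. The inclusion $\supseteq$ is immediate because $A_\qq$ is a localization of the $R_{\lambda^{-1}(\qq)}$-module $M_{\lambda^{-1}(\qq)}$; for $\subseteq$ one combines lying-over and going-up (applicable since $\lambda$ is integral) with the specialization-closure of $\Supp_A M$. Applying this to $M = H^n(Y)$ and noting that $H^n(Y)$ and $H^n(Y^\fgt)$ share the same underlying abelian group, the condition $\Supp_A H^n(Y)\subseteq \lambda^{-1}\Phi(n)$ becomes $\lambda^{\ast}\Supp_A H^n(Y)\subseteq \Phi(n)$, i.e.\ $\Supp_R H^n(Y^\fgt)\subseteq \Phi(n)$. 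Intersecting over $n\in\Zbb$ yields $Y\in\wbar{\Ucal}\Leftrightarrow Y^\fgt\in\Ucal$.

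For (ii), by (i) it suffices to show $(X\otimes_R^\mathbf{L} A)^\fgt\in\Ucal$, i.e.\ that $\Supp_R H^n(X\otimes_R^\mathbf{L} A)\subseteq \Phi(n)$ for every $n\in\Zbb$. Fix such $n$ and a prime $\pp\notin\Phi(n)$. Since $\Phi$ is decreasing, $\pp\notin\Phi(q)$ for all $q\geq n$, and $X\in\Ucal$ therefore forces $H^q(X_\pp)=H^q(X)_\pp=0$ for every $q\geq n$. Thus $X_\pp$ is cohomologically concentrated in degrees strictly below $n$ and admits a bounded-above projective resolution $P\to X_\pp$ with $P^i=0$ for $i\geq n$. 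Using the commutativity of derived tensor with localization,
\[
 (X\otimes_R^\mathbf{L} A)_\pp \simeq X_\pp \otimes_{R_\pp}^{\mathbf{L}} A_\pp \simeq P\otimes_{R_\pp} A_\pp,
\]
a complex that vanishes in every degree $\geq n$. Hence $H^n(X\otimes_R^\mathbf{L} A)_\pp = 0$, so $\pp\notin\Supp_R H^n(X\otimes_R^\mathbf{L} A)$.

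The main point I expect to require care is that the decreasing nature of $\Phi$ is exactly what turns the local picture $X_\pp$ into a cohomologically bounded-above complex, neatly sidestepping the convergence issue one would face with a direct appeal to the hyperhomology spectral sequence $E_2^{p,q}=\Tor^R_{-p}(H^q(X),A)\Rightarrow H^{p+q}(X\otimes_R^\mathbf{L} A)$ when $X$ is cohomologically unbounded below. Beyond this, the only subtlety is the support-transfer formula in (i), which is standard for integral extensions but worth spelling out.
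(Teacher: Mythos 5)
Your proof is correct. For part~(i) both you and the paper reduce to the support‐transfer identity for a module‐finite (hence integral) map; the paper first writes $M=H^n(Y)$ as a directed union of finitely generated $A$-submodules, observes that module‐finiteness of $\lambda$ makes these finite over $R$, and then cites the statement for finite modules from the Stacks project, whereas you prove $\Supp_R M^\fgt = \lambda^{*}(\Supp_A M)$ directly for arbitrary $A$-modules via lying‐over applied to the integral extension $R/\mathrm{Ann}_R(m)\hookrightarrow A/\mathrm{Ann}_A(m)$ (note $\Supp_A M$ is specialization‐closed for any $M$, being a union of the closed sets $V(\mathrm{Ann}_A(m))$, so your invocation of that fact is fine). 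The two routes are essentially equivalent; yours avoids the reduction step at the cost of spelling out the lying‐over argument. For part~(ii) the paper simply cites \cite[Proposition~2.3(i)]{Hrb20}, while you give a self‐contained local argument: since $\Phi$ is decreasing, $\pp\notin\Phi(n)$ and $X\in\Ucal$ force $X_\pp\in\D^{<n}(R_\pp)$, whence a bounded‐above projective resolution in degrees $<n$ gives $H^n\bigl((X\otimes_R^{\mathbf L}A)_\pp\bigr)=0$. Your remark that this sidesteps the convergence issue of the hyper-Tor spectral sequence for cohomologically unbounded $X$ is exactly the right observation; the local truncation via $\D^{<n}$ (equivalently, right $t$-exactness of $-\otimes^{\mathbf L}-$) is what makes the argument clean, and this version is more elementary than the citation.
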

\begin{proof}
(i).\ Let $M = H^n(Y)$ for some $n \in \Zbb$. Then condition (i) just says that $\Supp(M) \subseteq \lambda\Phi(n)$ is equivalent to $\Supp(\lambda_* M) \subseteq \Phi(n)$. By writing $M$ as a directed union of finitely generated submodules, and using that $\lambda$ is module-finite, we can clearly assume that $M$ is finitely generated. Then the statement follows from \cite[Lemma~10.40.6.]{Stacks}.

\subparagraph{(ii)} By (i), it suffices to show that $\lambda_* \lambda^* X \in \Ucal$. This follows from \cite[Proposition~2.3(i)]{Hrb20}.

\subparagraph{(iii)} For any $Y \in \lambda^* \Ucal$ we have $\Hom_{\D(A)}(Y,\lambda^! X) \cong \Hom_{\D(R)}(\lambda_*Y,X) = 0$ by (i), which shows that $\lambda^! X \in \lambda^* \Ucal \Perp{0} = \lambda^!\Vcal$.
\end{proof}

\begin{prop}\label{restrictable}%
Let $\lambda\colon R \to A$ be a commutative module-finite $R$-algebra. Let $(\Ucal,\Vcal)$ be an intermediate restrictable t-structure in $\D(R)$ and $(\lambda^* \Ucal,\lambda^! \Vcal)$ the induced t-structure in $\D(A)$. Then $(\lambda^* \Ucal,\lambda^! \Vcal)$ is restrictable in $\D(A)$.
\end{prop}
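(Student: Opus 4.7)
The plan is to transfer restrictability through the forgetful functor $(-)^\fgt\colon\D(A)\to\D(R)$, using the key fact (recalled just before \cref{algebra-restr}) that module-finiteness of $\lambda$ makes $Y\in\D^\bdd_\fg(A)$ equivalent to $Y^\fgt\in\D^\bdd_\fg(R)$. The only substantive preparatory step I need is a coaisle analogue of \cref{algebra-restr}(i): for every $Y\in\D(A)$,
\[
	Y\in\wbar{\Vcal}\iff Y^\fgt\in\Vcal.
\]
I would derive this from the depth description of the coaisle of a compactly generated t-structure recalled in \cref{ss:ajs}. Since $\lambda$ is module-finite, for each $\pp\in\Spec R$ the base change $A_\pp$ is a semilocal noetherian ring whose maximal ideals are precisely $\{\qq A_\pp\mid\qq\in\lambda^{-1}(\pp)\}$, with Jacobson radical having radical $\pp A_\pp$. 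Combined with the standard facts that (a) local cohomology over $R$ with respect to $\pp$ coincides with local cohomology over $A$ with respect to $\pp A$ when applied to an $A$-module, and (b) for a semilocal ring $S$ with Jacobson radical $J=\bigcap_i\mathfrak{m}_i$ one has $\RGamma_J\cong\bigoplus_i\RGamma_{\mathfrak{m}_i}$ as functors on $S$-modules (which follows from pairwise comaximality of the $\mathfrak{m}_i$ and the Chinese remainder theorem), this yields the depth formula
\[
	\depth_{R_\pp}\bigl((Y^\fgt)_\pp\bigr)
		= \min\bigl\{\depth_{A_\qq}(Y_\qq)\mid \qq\in\Spec A,\ \lambda^{-1}(\qq)=\pp\bigr\},
\]
with the convention that the minimum of the empty set is $+\infty$ (consistent with $(Y^\fgt)_\pp=0$ whenever no prime of $A$ lies over $\pp$, which happens precisely when $\pp\not\supseteq\ker\lambda$). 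Comparing with the depth descriptions of $\Vcal$ (for $\Phi$) and $\wbar{\Vcal}$ (for $\lambda^{-1}\Phi$) gives both directions of the equivalence.

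Given this, the rest of the proof is formal. Let $Y\in\D^\bdd_\fg(A)$ and consider the approximation triangle $U'\to Y\to V'\to U'[1]$ in $\D(A)$ for $(\wbar{\Ucal},\wbar{\Vcal})$. Applying $(-)^\fgt$ yields a triangle $(U')^\fgt\to Y^\fgt\to(V')^\fgt\to$ in $\D(R)$ with $(U')^\fgt\in\Ucal$ by \cref{algebra-restr}(i) and $(V')^\fgt\in\Vcal$ by the coaisle equivalence just established. By uniqueness of truncation triangles, this triangle is the approximation of $Y^\fgt$ for $(\Ucal,\Vcal)$. Since $Y^\fgt\in\D^\bdd_\fg(R)$ and $(\Ucal,\Vcal)$ is restrictable, both $(U')^\fgt$ and $(V')^\fgt$ belong to $\D^\bdd_\fg(R)$; invoking module-finiteness of $\lambda$ once more lifts this to $U',V'\in\D^\bdd_\fg(A)$, so $(\wbar{\Ucal},\wbar{\Vcal})$ restricts to $\D^\bdd_\fg(A)$.

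I expect the main obstacle to be the coaisle equivalence above: \cref{algebra-restr}(i) for the aisle is essentially a formal statement about the behaviour of supports under $\lambda^*$, while the coaisle identification needs a genuine use of module-finiteness to relate local cohomology (and hence depth) on $\Spec R$ to local cohomology on $\Spec A$ via the semilocal structure of the fibre rings $A_\pp$. Notably, the intermediacy hypothesis on $(\Ucal,\Vcal)$ does not appear to enter this argument, though it is consistent with the intended applications.
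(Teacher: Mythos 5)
Your proof is correct, and it takes a genuinely different route from the paper's. The paper never establishes a coaisle analogue of \cref{algebra-restr}(i); it works entirely on the aisle side. Concretely, it compares the two approximation triangles $\wbar{U}\to X\to\wbar{V}\to$ in $\D(A)$ and $U\to X^\fgt\to V\to$ in $\D(R)$, then uses the unit of the adjunction $(-\otimes_R^\mathbf{L}A)\dashv(-)^\fgt$ together with the universal properties of the two truncations to build an endomorphism $e$ of $\wbar{U}$ whose lift $l\colon U\otimes_R^\mathbf{L}A\to\wbar{U}$ is shown to be a split epimorphism in $\D(A)$. Since $U\in\D^\bdd_\fg(R)$ and $A$ is finitely generated over $R$, the source $U\otimes_R^\mathbf{L}A$ has finitely generated cohomology over $A$, hence so does the retract $\wbar{U}$; boundedness of $\wbar{U}$ is supplied separately by the intermediacy hypothesis. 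Your argument sidesteps all of this: once you know $(-)^\fgt$ preserves membership in both aisle and coaisle, the image of the triangle in $\D(A)$ is forced by functorial uniqueness to be \emph{the} approximation triangle of $Y^\fgt$ in $\D(R)$, so $(U')^\fgt\cong U\in\D^\bdd_\fg(R)$ on the nose, and module-finiteness gives $U'\in\D^\bdd_\fg(A)$ directly. The price you pay is the coaisle identification, which you correctly ground in the depth description of compactly generated coaisles, the independence-of-base property of $\RGamma$, and the semilocal splitting $\RGamma_J\cong\bigoplus_i\RGamma_{\mm_i}$ for the fibre ring $A_\pp$. (Incidentally, one direction of your equivalence, $Y\in\wbar{\Vcal}\Rightarrow Y^\fgt\in\Vcal$, is also available by pure adjunction from \cref{algebra-restr}(ii), but the converse genuinely needs the local-cohomology computation.) As a bonus, you are right that your proof makes no use of intermediacy, so it in fact proves a marginally stronger statement; the paper only invokes intermediacy to get boundedness of $\wbar U$, which you obtain for free from $(U')^\fgt\cong U$.
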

\begin{proof}
 Let $X \in \D^\bdd_\fg(A)$ and consider the approximation triangle 
\[
	\wbar{U}\buildrel f\over\longrightarrow X\longrightarrow
		\wbar{V}\buildrel+\over\longrightarrow
\]
 with respect to $(\lambda^* \Ucal,\lambda^! \Vcal)$ in $\D(A)$, as well as the approximation triangle 
\[
	U\buildrel h\over\longrightarrow \lambda_* X \longrightarrow
		V\buildrel+\over\longrightarrow
\]
with respect to $(\Ucal,\Vcal)$ in $\D(R)$. Since $\lambda\colon R \to A$ is module-finite, $\lambda_* X \in \D^\bdd_\fg(R)$. By the assumption, we know that $U \in \D^\bdd_\fg(R)$, and the goal is to prove that $\wbar{U} \in \D^\bdd_\fg(A)$. Since the t-structure $(\Ucal,\Vcal)$ is intermediate, it is easy to see that so is $(\lambda^* \Ucal,\lambda^! \Vcal)$. Then clearly $\wbar{U} \in \D^\bdd(A)$.
 
For any object $M \in \D(R)$ the natural map
\[
	\eta_M = M \otimes_R^\mathbf{L} \lambda\colon M\longrightarrow
		\lambda_* \lambda^* M
\]
in $\D(R)$ is the unit of the adjunction. It follows that for any $\wbar{M} \in \D(A)$ and any map $s\colon M \to \lambda_* \wbar{M}$ in $\D(R)$, there is a map $\wtilde{s}\colon \lambda^* M  \to \wbar{M}$ in $\D(A)$ such that $s = \lambda_*(\wtilde{s}) \eta_M$.

Consider the map $\wtilde{h}\colon \lambda^* U  \to X$ induced from $h$ as above. Since $\lambda^* U  \in \lambda^* \Ucal$ by \cref{algebra-restr}, there is a (essentially unique) map $l\colon \lambda^* U \to \wbar{U}$ in $\D(A)$ such that $\wtilde{h}=fl$. Similarly, there is a (essentially unique) map $g\colon \lambda_*\wbar{U} \to U$ in $\D(R)$ such that $\lambda_*(f) = hg$. Consider the composition $\eta_U g\colon \lambda_*\wbar{U} \to \lambda_* \lambda^* U$, then there is again a map $\gamma = \widetilde{\eta_U g}\colon \lambda^*\lambda_*\wbar{U}  \to \lambda^* U $ in $\D(A)$ such that $\eta_U g = \lambda_*(\gamma) \eta_{\lambda_*\wbar{U}}$. Consider the composition:
\[
	\lambda_*\wbar{U}\buildrel\eta_{\lambda_*\wbar{U}}\over\longrightarrow
		\lambda_*\lambda^*\lambda_*\wbar{U}\buildrel\lambda_*(\gamma)\over\longrightarrow
		\lambda_* \lambda^* U \buildrel \lambda_*(l) \over\longrightarrow \lambda_*\wbar{U}.
\]
First, because $\eta_{\lambda_*\wbar{U}} = \lambda_*\wbar{U} \otimes_R^\mathbf{L} \lambda$, the map $\eta_{\lambda_*\wbar{U}}$ is in the essential image of the forgetful functor $\lambda_*$, and therefore the composition $\lambda_*(l) \lambda_*(\gamma) \eta_{\lambda_*\wbar{U}}$ is of the form $\lambda_*(e)$ for some $e \in \End_{\D(A)}(\wbar{U})$. We compute: $\lambda_*(f) \lambda_*(l) \lambda_*(\gamma) \eta_{\lambda_*\wbar{U}} = \lambda_*(\wtilde{h}) \eta_U g = hg = \lambda_*(f)$. It follows that $fe = f$, and therefore $e$ is an automorphism of $\wbar{U}$. As a consequence, $l\colon \lambda^* U  \to \wbar{U}$ is a split epimorphism in $\D(A)$. Since $U \in \D^\bdd_\fg(R)$ and $A$ is a finitely generated $R$-module, it follows that every cohomology of $\lambda^* U $ is finitely generated over $R$, and therefore also over $A$. Then the same is true for $\wbar{U}$. Since we already know that $\wbar{U}$ is cohomologically bounded, the proof is concluded.
\end{proof}

The following characterization of when the cotilting property passes to factor rings of $R$ is to some extent implicit in \cite[\S 7]{HNS}.

\begin{lemma}\label{cotilting-flat}%
Let $C$ be a cosilting complex in $\D(R)$. The following are equivalent:
 
\begin{enumerate}[(a)]
\item $C$ is cotilting and $\Hom_{\D(R)}(C^\varkappa,C)$ is flat as an $R$-module for any cardinal $\varkappa$;

\item for each ideal $I$ of $R$, $\RHom_R(R/I,C)$ is a cotilting object in $\D(R/I)$. 
\end{enumerate}
\end{lemma}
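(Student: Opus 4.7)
The plan is to reduce the cotilting property of $C' := \RHom_R(R/I, C)$ in $\D(R/I)$ to an Ext-vanishing statement over $R$ via adjunction, and then to identify that statement with the flatness condition through a derived tensor-hom computation.

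First, I would observe that $C' = \RHom_R(R/I, C)$ is always a cosilting complex in $\D(R/I)$, independent of (a) or (b). It lies in $\K^\bdd(\Inj R/I)$ since $\Hom_R(R/I,-)$ carries injective $R$-modules to injective $R/I$-modules; the inclusion $\Prod(C') \subseteq \Perp{>0}C'$ is automatic since $\Perp{>0}C'$ is closed under products; and $C'$ cogenerates $\D(R/I)$ by virtue of the adjunction $\RHom_{R/I}(X, C') \cong \RHom_R(X^\fgt, C)$ combined with cogeneration by $C$. So by \cref{BHM-crit}, $C'$ is cosilting. Moreover $(C')^\varkappa \cong \RHom_R(R/I, C^\varkappa)$, because $\RHom_R(R/I, -)$ preserves products. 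The cotilting property of $C'$ is thus equivalent to $\Ext^i_{R/I}((C')^\varkappa, C') = 0$ for all $\varkappa$ and all $i \neq 0$ (with $i > 0$ automatic by cosilting), which via the same adjunction translates to
\[
	\Ext^i_R(\RHom_R(R/I, C^\varkappa), C) = 0\quad
		\text{for all }i\neq 0\text{ and all }\varkappa.
\]

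The technical core of the argument is the natural quasi-isomorphism
\[
	R/I \otimes_R^{\mathbf L} \RHom_R(C^\varkappa, C)\;
		\xrightarrow{\;\sim\;}\;\RHom_R(\RHom_R(R/I, C^\varkappa), C)
\]
in $\D(R)$, a derived tensor-evaluation morphism. I would establish it by taking a projective resolution $P_\bullet \to R/I$ by finitely generated projective $R$-modules (possible since $R$ is noetherian) and exploiting, term-wise, the reflexivity identity $\Hom_R(\Hom_R(P_n, M), N) \cong P_n \otimes_R \Hom_R(M, N)$, together with the fact that $C \in \K^\bdd(\Inj R)$ so that $\Hom_R(-, C)$ computes $\RHom_R(-, C)$ on the nose and the totalizations of the resulting double complexes are finite in each degree.

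Granted the quasi-isomorphism, the equivalence falls out quickly. For (a)~$\Rightarrow$~(b): cotilting of $C$ gives $\RHom_R(C^\varkappa, C) = \Hom_R(C^\varkappa, C)$ concentrated in degree $0$, and flatness of this module puts $R/I \otimes_R^{\mathbf L} \Hom_R(C^\varkappa, C)$ in degree $0$, so by the quasi-iso the required Ext-vanishing holds and $C'$ is cotilting. For (b)~$\Rightarrow$~(a): specializing (b) to $I = 0$ yields $C$ cotilting, hence $\RHom_R(C^\varkappa, C) = \Hom_R(C^\varkappa, C)$; then the quasi-iso combined with (b) forces $R/I \otimes_R^{\mathbf L} \Hom_R(C^\varkappa, C)$ to sit in degree $0$ for every ideal $I$, so $\Tor^R_j(R/I, \Hom_R(C^\varkappa, C)) = 0$ for all $j > 0$ and all $I$, which gives flatness of $\Hom_R(C^\varkappa, C)$ by the standard noetherian criterion.

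The main obstacle will be the derived tensor-hom identification at the heart of the argument. While the term-wise reflexivity is elementary, care is needed to ensure it globalizes to a quasi-isomorphism of derived functors, in particular when $R/I$ has infinite projective dimension and $P_\bullet$ is unbounded; controlling this leans on the boundedness of $C$ to keep the double-complex totalizations finite in each degree on both sides.
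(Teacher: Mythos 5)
Your proof is correct and essentially identical to the paper's: after applying the adjunction between $\RHom_{R/I}$ and $\RHom_R$, your derived tensor--hom quasi-isomorphism $R/I \otimes_R^{\mathbf L} \RHom_R(C^\varkappa,C) \cong \RHom_R(\RHom_R(R/I,C^\varkappa),C)$ is precisely the isomorphism $\RHom_R(C^\varkappa,C) \otimes_R^{\mathbf L} R/I \cong \RHom_{R/I}(\wbar{C}^\varkappa,\wbar{C})$ the paper uses, and the subsequent reductions (pass to $\Hom_{\D(R)}(C^\varkappa,C)$ once $C$ is known to be cotilting, read off Tor-vanishing, conclude flatness by the Tor test) are the same in both directions. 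The only presentational difference is that you sketch a direct proof of the key quasi-isomorphism via a finitely generated projective resolution of $R/I$ and term-wise reflexivity, where the paper cites it as parallel to \cite[Proposition~7.4]{HNS} and quotes \cite{BHM} for $\wbar{C}$ being cosilting.
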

\begin{proof}
By a general argument \cite[Theorem~4.2(II)(1)]{BHM}, $\wbar{C} \leqdef \RHom_R(R/I,C)$ is a cosilting object in $\D(R/I)$. Arguing similarly as in the proof of \cite[Proposition~7.4]{HNS}, \cite[Proposition 2.1(ii)]{CH09} yields an isomorphism
\[
	\RHom_R(C^\varkappa,C) \otimes_R^\mathbf{L} R/I \cong
		\RHom_{R/I}(\wbar{C}^\varkappa,\wbar{C})
\]
for any cardinal $\varkappa$. Since $C$ is cotilting, we have $\RHom_R(C^\varkappa,C) \cong\Hom_{\D(R)}(C^\varkappa,C)$ in $\D(R)$. If $\Hom_{\D(R)}(C^\varkappa,C)$ is flat, then the cohomology of $\RHom_{R/I}(\wbar{C}^\varkappa,\wbar{C})$ is concentrated in degree zero, and thus the cosilting complex $\wbar{C}$ is cotilting in $\D(R/I)$. Conversely, assume that $\RHom_R(R/I,C)$ is cotilting in $\D(R/I)$ for all ideals $I$. Then by the isomorphism above, $\Tor_i^R(R/I,\Hom_{\D(R)}(C^\varkappa,C)) = 0$ for all ideals $I$ and $i>0$, and thus $\Hom_{\D(R)}(C^\varkappa,C)$ is a flat $R$-module by the flat test.
\end{proof}

\begin{rmk}
In \cite[\S 7]{HNS}, also the dual condition of $\Hom_{\D(R)}(T,T^{(\varkappa)})$ being flat for all cardinals $\varkappa$ is considered for a tilting complex $T$. Analogously to \cref{cotilting-flat}, one can show that this condition is equivalent to $T \otimes_R^\mathbf{L} R/I$ being a tilting complex in $\D(R/I)$ for all ideals $I$. We remark that in light of \cite[Corollary~3.7]{Hrb22}, there is now a one-way relation between these two conditions for a pair of a tilting complex $T$ and its dual cotilting complex $T^+$ in $\D(R)$. Indeed, 
\[
	\Hom_{\D(R)}((T^+)^\varkappa,T^+) \hbox{ is flat for all $\varkappa$}\quad\Longrightarrow\quad
		\Hom_{\D(R)}(T,T^{(\varkappa)}) \hbox{ is flat for all $\varkappa$}.
\]
Whether the converse is true in this setting remains unclear.
\end{rmk}

In what follows, we show that the equivalent conditions of \cref{cotilting-flat} are strongly connected to the restrictability of the induced cotilting t-structure.

\begin{lemma}\label{pi-pure-split}
	A triangle $X \to Y \to Z \buildrel+\over\to$ is pure in $\D(R)$ if and only if the induced triangle $\RHom_R(Z,I) \to \RHom_R(Y,I) \to \RHom_R(X,I) \buildrel+\over\to$ is split for any pure-injective object $I \in \D(R)$.
\end{lemma}
\begin{proof}
	The ``if'' statement follows from \cite[Lemma 2.6(iii)]{AHH19} by setting $I = R^+$. Let us prove the ``only if'' statement. If $I = X^+$ for some $X \in \D(R)$ then $\RHom_R(-,X^+) \cong (- \otimes_R^\mathbf{L} X)^+$, and thus the triangle is split by an application of \cite[Lemma 2.6(ii),(iii)]{AHH19}. Finally, the case of a general pure-injective object $I$ reduces to the previous one because there is a natural morphism $I \to I^{++}$ which is a split monomorphism by \cite[Lemma 2.7]{AHH19}.
\end{proof}

\begin{lemma}\label{cotilting-coloc}%
Let $C$ be a cotilting complex in $\D(R)$. Then for any $\pp \in \Spec R$, $\RHom_R(R_\pp,C)$ is a cotilting complex in $\D(R_\pp)$.
\end{lemma}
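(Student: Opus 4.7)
Set $\wbar C := \RHom_R(R_\pp, C)$. The plan is in two steps: first verify that $\wbar C$ is a cosilting complex in $\D(R_\pp)$, then upgrade to cotilting.

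For the cosilting step, I first check $\wbar C \in \K^\bdd(R_\pp\lInj)$: for any injective $R$-module $I$, the adjunction $\Hom_{R_\pp}(-, \Hom_R(R_\pp, I)) \cong \Hom_R(-, I)$ shows that $\Hom_R(R_\pp, I)$ is an injective $R_\pp$-module, so applying $\Hom_R(R_\pp, -)$ termwise to a bounded injective representative of $C$ (available by \cref{BHM-crit}(a')) represents $\wbar C$ as a bounded complex of injective $R_\pp$-modules. The key tool is then the derived adjunction
\begin{equation*}
	\Hom_{\D(R_\pp)}(Y, \wbar C[i]) \cong \Hom_{\D(R)}(\iota Y, C[i])
\end{equation*}
for $Y \in \D(R_\pp)$, where $\iota\colon \D(R_\pp) \to \D(R)$ is restriction of scalars. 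This yields cogeneration of $\D(R_\pp)$ by $\wbar C$ and identifies the $\wbar C$-induced aisle, coaisle, and heart in $\D(R_\pp)$ with $\iota^{-1}$ of those induced by $C$ in $\D(R)$. Thus $\iota$ is t-exact, fully faithful (since $R \to R_\pp$ is a ring epimorphism), and $\Hcal_{\wbar C} = \iota^{-1}(\Hcal_C)$.

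Both the remaining cosilting orthogonality $\Prod(\wbar C) \subseteq \Perp{>0}\wbar C$ and the cotilting property $\Prod(\wbar C) \subseteq \Hcal_{\wbar C}$ now reduce, via the heart identification together with the fact that $\iota$ preserves products as a right adjoint, to the single statement that $\iota\RHom_R(R_\pp, C^\varkappa) \in \Hcal_C$ for every cardinal $\varkappa$. By \cref{tilting-bounded}(i'), $\Hcal_C$ is a Grothendieck category with $C$ as injective cogenerator, so $C^\varkappa$ is an injective object of $\Hcal_C$. My plan is to exhibit $\iota \circ \RHom_R(R_\pp, -)$ as the derived functor of the right adjoint $\rho$ to the fully faithful exact inclusion $\iota\colon \Hcal_{\wbar C} \hookrightarrow \Hcal_C$, whose existence reflects that $R \to R_\pp$ is a flat ring epimorphism making the $R_\pp$-local objects a reflective-coreflective subcategory at the heart level. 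Since $\rho$ is left exact as a right adjoint of an exact functor and $C^\varkappa$ is injective in $\Hcal_C$, $\mathbf{R}\rho C^\varkappa \cong \rho C^\varkappa$ is concentrated in degree zero; correspondingly, $\RHom_R(R_\pp, C^\varkappa) \in \Hcal_{\wbar C}$, and hence $\iota\RHom_R(R_\pp, C^\varkappa) \in \Hcal_C$ as desired.

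The main technical obstacle is rigorously identifying $\mathbf{R}\rho$ with $\RHom_R(R_\pp, -)$ via the realization equivalences $\D^\bdd(R\lMod) \cong \D^\bdd(\Hcal_C)$ and $\D^\bdd(R_\pp\lMod) \cong \D^\bdd(\Hcal_{\wbar C})$ of \cref{tilting-bounded}(i'); this follows in principle from the essential uniqueness of derived right adjoints, but requires some care as the realization equivalences are stated only at the bounded level. A plausible alternative, modelled on the proof of \cref{cotilting-flat}, would be to establish an analogous identity
\begin{equation*}
	\RHom_R(C^\varkappa, C) \otimes_R R_\pp \cong \RHom_{R_\pp}(\wbar C^\varkappa, \wbar C),
\end{equation*}
from which the cotilting property follows immediately: the left-hand side is concentrated in degree zero since $\RHom_R(C^\varkappa, C) = \Hom_{\D(R)}(C^\varkappa, C)$ by cotilting of $C$ and $R_\pp$ is $R$-flat, forcing $\Ext^i_{R_\pp}(\wbar C^\varkappa, \wbar C) = 0$ for $i \neq 0$ and thus $\wbar C^\varkappa \in \Hcal_{\wbar C}$. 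Verifying this identity is the analog of the HNS-style base-change argument used in \cref{cotilting-flat}, but is potentially more delicate because $R \to R_\pp$ is not of finite type.
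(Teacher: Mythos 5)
You correctly reduce the problem: after invoking \cref{BHM-crit} (or the cited result from \cite{BHM}) to get that $\wbar C = \RHom_R(R_\pp,C)$ is cosilting, and using the adjunction between restriction and coinduction, the whole statement collapses to showing $\RHom_R(R_\pp,C^\varkappa)\in\Hcal_C$ for all $\varkappa$. This is exactly the paper's reduction. The gap is in how you propose to close it.

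Plan~A is left at the level of a sketch and you flag the difficulty yourself: identifying the derived right adjoint of the heart inclusion with $\RHom_R(R_\pp,-)$ through the realization equivalences of \cref{tilting-bounded} is precisely the kind of compatibility statement that requires serious work, and you do not supply it. Plan~B is worse: the proposed base-change isomorphism $\RHom_R(C^\varkappa,C)\otimes_R R_\pp \cong \RHom_{R_\pp}(\wbar C^\varkappa,\wbar C)$ is likely false. In \cref{cotilting-flat} the corresponding identity for $R/I$ rests on $R/I$ being a finitely presented $R$-module (so that $\RHom$ and coinduction interact well with base change); $R_\pp$ is flat but not finitely presented, and the natural comparison map built from the counit $\RHom_R(R_\pp,C^\varkappa)\to C^\varkappa$ followed by evaluation against $\RHom_R(C^\varkappa,C)$ has no reason to be an isomorphism without such a finiteness hypothesis. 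Your closing sentence acknowledges this, but that means neither plan actually proves the lemma.

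The paper's argument is much more elementary and avoids both routes. It observes that $C$ is pure-injective in $\D(R)$ (being a cosilting complex), hence so is $C^\varkappa$. Since $R_\pp$ is a flat $R$-module, any surjection $\pi\colon R^{(\lambda)}\twoheadrightarrow R_\pp$ is a pure epimorphism, so $\RHom_R(\pi,C^\varkappa)\colon\RHom_R(R_\pp,C^\varkappa)\to\RHom_R(R^{(\lambda)},C^\varkappa)\cong C^{\varkappa\times\lambda}$ is a split monomorphism in $\D(R)$. Because $C$ is cotilting, $C^{\varkappa\times\lambda}\in\Hcal_C$, and $\Hcal_C$ is closed under direct summands, giving $\RHom_R(R_\pp,C^\varkappa)\in\Hcal_C$ immediately. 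The missing ingredient in your proposal is this interplay of flatness and pure-injectivity; it replaces both your abstract adjoint argument and the fragile base-change identity by a one-line splitting.
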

\begin{proof}
This follows similarly as \cite[Lemma 5.10]{HHZ21}, but we also provide a direct proof. As above, \cite[Theorem~4.2(II)(1)]{BHM} yields that $\RHom_R(R_\pp,C)$ is a cosilting complex in $\D(R_\pp)$, so it suffices to show that $\RHom_R(R_\pp,C)^\varkappa \in \Perp{<0}\!\RHom_R(R_\pp,C)$ for any cardinal $\varkappa$. By adjunction, this reduces to showing that $\RHom_R(R_\pp,C^\varkappa) \in \Perp{<0}C$. We show more generally that $\RHom_R(F,C^\varkappa) \in \Perp{<0}C$, where $F$ is a flat $R$-module. Let $\pi\colon R^{(\lambda)} \to F$ be an epimorphism for some cardinal $\lambda$. Since $F$ is flat, $\pi$ is a pure epimorphism. Since $C^\varkappa$ is pure-injective, the induced morphism $\RHom_R(\pi,C^\varkappa)\colon\RHom_R(F,C^\varkappa) \to \RHom_R(R^{(\lambda)},C^\varkappa)$ is a split monomorphism in $\D(R)$ by \cref{pi-pure-split}. As $C$ is cotilting, we have that $\RHom_R(R^{(\lambda)},C^\varkappa) \cong (C^\varkappa)^\lambda$ belongs to $\Perp{<0}C$, and thus $\RHom_R(F,C^\varkappa) \in \Perp{<0}C$ as desired.
\end{proof}

\begin{prop}\label{restrictable-flat}%
Let $C$ be a cotilting complex in $\D(R)$ whose induced t-structure $(\Ucal,\Vcal)$ corresponds to an sp-filtration $\Phi$. Then:

\begin{enumerate}[(i)]
\item If $(\Ucal,\Vcal)$ is restrictable then $\Hom_{\D(R)}(C^\varkappa,C)$ is flat as an $R$-module for any cardinal $\varkappa$.

\item If $\Hom_{\D(R)}(C^\varkappa,C)$ is flat as an $R$-module for any cardinal $\varkappa$ then $\Phi$ satisfies the weak Cousin condition.
\end{enumerate}

\noindent As a consequence, if $R$ is CM-excellent (see \cref{s:cmexc}) then $\Hom_{\D(R)}(C^\varkappa,C)$ is flat as an $R$-module for any cardinal $\varkappa$ if and only if $(\Ucal,\Vcal)$ is restrictable.
\end{prop}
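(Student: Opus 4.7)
The plan is to use \cref{cotilting-flat} to translate the flatness of $\Hom_{\D(R)}(C^\varkappa,C)$ into the statement that $\wbar{C}_I \leqdef \RHom_R(R/I,C)$ is a cotilting complex in $\D(R/I)$ for every ideal $I \subseteq R$, and then to handle the two directions separately via the descent results \cref{restrictable} and \cref{cotilting-coloc}. The final consequence follows by combining the two implications with \cref{restrictable-props}\textup{(iii)}, which in the CM-excellent case identifies the weak Cousin condition with restrictability.

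For \textup{(i)}, assume $(\Ucal,\Vcal)$ is restrictable; it is automatically intermediate since $C$ is cotilting. The quotient $\lambda\colon R \to R/I$ is module-finite, so \cref{restrictable} yields that the induced t-structure on $\D(R/I)$ (corresponding to $\lambda^{-1}\Phi$) is intermediate and restrictable, and hence cotilting by \cref{restrictable-props}\textup{(i)}. A direct adjunction computation using $\Hom_{\D(R/I)}(X,\wbar{C}_I) \cong \Hom_{\D(R)}(X^\fgt,C)$ together with \cref{algebra-restr}\textup{(i)} identifies $\wbar{C}_I$---already known to be cosilting by \cite[Theorem~4.2(II)(1)]{BHM}---as inducing this same t-structure. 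Since cosilting complexes inducing the same t-structure are equivalent and cotiltedness is invariant under this equivalence, $\wbar{C}_I$ itself is cotilting, and \cref{cotilting-flat} gives the flatness.

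For \textup{(ii)}, assume the flatness, so by \cref{cotilting-flat} each $\wbar{C}_I$ is cotilting. Suppose for contradiction that the weak Cousin condition fails at a minimal inclusion $\pp \subsetneq \qq$ with $\qq \in \Phi(n)$ and $\pp \notin \Phi(n-1)$. Iteratively combining \cref{cotilting-coloc} (for localization) and \cref{cotilting-flat} (for quotients), we obtain a cotilting complex $C_S$ on the $1$-dimensional local domain $S = R_\qq/\pp R_\qq$ such that $\RHom_S(S/J,C_S)$ is cotilting in $\D(S/J)$ for every ideal $J \subseteq S$. Tracking the sp-filtration through these base-change operations, $\Phi_S$ satisfies $\mm_S \in \Phi_S(n) \cap \Phi_S(n-1)$ but $(0) \notin \Phi_S(n-1)$; writing $\Phi_S(m) = \{\mm_S\}$ for integers $a \leq m < b$, we have $b \geq a+2$.

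The main obstacle is to extract a contradiction from this setup. Applying the hypothesis to $J = \mm_S^i$, the cotilting t-structure on the local artinian ring $S/\mm_S^i$ is the standard one shifted by $b$, so $\RHom_S(S/\mm_S^i,C_S)$ is concentrated in a single cohomological degree (namely $b$) as a complex. Via the hypercohomology spectral sequence $E_2^{p,q} = \Ext^p_S(S/\mm_S^i,H^q(C_S)) \Rightarrow H^{p+q}(\RHom_S(S/\mm_S^i,C_S))$, letting $i$ vary forces $H^m(C_S)$ to be $\mm_S$-divisible for $m \neq b-1$ and $\mm_S$-torsion-free for $m \neq b$. Combined with the heart condition $\Supp H^m(C_S) \subseteq \Phi_S(m-1)$, which makes $H^m(C_S)$ into an $\mm_S$-torsion module for $a+1 \leq m \leq b$, this yields $H^m(C_S) = 0$ throughout the Cousin-violating range $a+1 \leq m \leq b-1$; the surviving $H^b(C_S)$ is $\mm_S$-torsion and divisible (hence a direct sum of copies of $E(k_S)$), while $H^m(C_S)$ for $m \leq a$ is a module over $S_{(0)}$. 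If $H^b(C_S) = 0$, then $\RHom_S(k_S,C_S) = 0$ and $C_S$ fails to cogenerate $\D(S)$, a contradiction. Otherwise, $H^b(C_S^\varkappa) = H^b(C_S)^\varkappa$ contains $E(k_S)^\varkappa$ as a submodule, whose stalk at $(0)$ is nonzero because $E(k_S)$ contains elements of arbitrarily large $\mm_S$-torsion order, so a suitable sequence in the product is not annihilated by any nonzero element of $S$. Thus $\Supp H^b(C_S^\varkappa) \not\subseteq \{\mm_S\} = \Phi_S(b-1)$, contradicting $C_S^\varkappa \in \Hcal_{C_S}$.
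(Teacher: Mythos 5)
Your argument for part (i) is correct and matches the paper's: reduce to ring quotients via \cref{restrictable} (the paper actually writes ``\cref{restrict-char}'' there, which is clearly a typo for \cref{restrictable}, since \cref{restrict-char} comes later and depends on the present proposition), identify the induced cosilting complex as $\RHom_R(R/I,C)$ via adjunction, apply \cref{restrictable-props}(i) to get cotiltedness, and conclude with \cref{cotilting-flat}. Your reduction in part (ii) to a one-dimensional local domain $S = R_\qq/\pp R_\qq$ via \cref{cotilting-coloc} and \cref{cotilting-flat} is also the same as the paper's, as is the translation of the failure of weak Cousin into the statement that $\Phi_S$ is a slice filtration that is not a codimension filtration.

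Where you diverge is at the final contradiction, and this is where a genuine gap appears. The paper simply invokes \cite[Proposition~6.10(2)]{HNS}, which says precisely that a cosilting complex over a local ring corresponding to a non-codimension slice filtration cannot be cotilting; you instead try to rederive this from scratch with a spectral-sequence argument, and that argument is not sound as written. Specifically, from the vanishing of the abutment $H^{p+q}(\RHom_S(S/\mm_S^i,C_S))$ for $p+q\neq b$, you claim it ``forces $H^m(C_S)$ to be $\mm_S$-divisible for $m\neq b-1$ and $\mm_S$-torsion-free for $m\neq b$.'' But vanishing of the abutment only forces an $E_2$-term to vanish if that term cannot be cancelled by differentials. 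For the minimal degree $m_0$ with $H^{m_0}(C_S)\neq 0$, the slot $E_2^{0,m_0}=\Hom_S(S/\mm_S^i,H^{m_0}(C_S))$ indeed survives to $E_\infty$ (no incoming or outgoing differentials), so torsion-freeness follows \emph{there}; but for intermediate $m$ the outgoing differentials $d_r\colon E_r^{0,m}\to E_r^{r,m-r+1}$ land on slots involving $\Ext^r_S(S/\mm_S^i,H^{m-r+1}(C_S))$, which over a non-regular one-dimensional local ring need not vanish even when $H^{m-r+1}(C_S)$ is $\mm_S$-torsion-free (e.g.\ $\Ext^1_S(S/\mm_S^i,M)\cong\Hom_S(\mm_S^i,M)/M$). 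So cancellation is possible and the stated conclusion about all $m\neq b$ does not follow. A second, smaller gap: ``$\mm_S$-torsion and divisible, hence a direct sum of copies of $E(k_S)$'' is a DVR-type fact and is not automatic over an arbitrary one-dimensional local noetherian domain, where torsion divisible modules need not be injective. To make your approach work you would need a much more careful cohomological analysis, essentially reproving the cited \cite[Proposition~6.10(2)]{HNS}; the paper's route of citing that result is the intended one.
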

\begin{proof}\leavevmode%
\subparagraph{(i)} By \cref{restrictable}, for any ring quotient $\lambda\colon R \to R/I$ the induced t-structure $(\lambda^* \Ucal,\lambda^! \Vcal)$ is restrictable in $\D(R/I)$. For any $Y \in \D(R/I)$ we have the adjunction isomorphism $\RHom_{R/I}(Y,\lambda^! C) \cong \RHom_R(\lambda_* Y,C)$, which shows that $(\lambda^* \Ucal,\lambda^! \Vcal) = (\Perp{\leq 0}\!\lambda^! C,\Perp{>0}\!\lambda^! C)$ is the cosilting t-structure in $\D(R/I)$ induced by the cosilting complex $\lambda^! C = \RHom_R(R/I,C)$. Combined with \cref{restrictable-props}(i), we see that $\lambda^! C$ is cotilting in $\D(R/I)$. Then the claim follows by \cref{cotilting-flat}.

\subparagraph{(ii)} By \cref{cotilting-flat}, we have for any ideal $I$ of $R$ that the cosilting complex $\lambda^! C$ in $\D(R/I)$ is cotilting, where $\lambda\colon R \to R/I$ si the quotient morphism. Towards contradiction, let $n \in \Zbb$ and $\pp \subsetneq \qq$ be a minimal inclusion of primes such that $\qq \in \Phi(n)$ but $\pp \not\in \Phi(n-1)$. The weak Cousin condition is clearly a local property, which together with \cref{cotilting-coloc} allows us to pass to the localization $R_\qq$. We thus assume without loss of generality that $R$ is local with the maximal ideal $\qq = \mm$. By the assumption, $\RHom_R(R/\pp,C)$ is a cotilting complex in the derived category $\D(R/\pp)$ of the one-dimensional local domain $R/\pp$. The corresponding sp-filtration $\lambda\Phi$ of $\Spec{R/\pp}$ satisfies by the construction $\lambda\Phi(n-1) = \lambda\Phi(n) = \{\overline{\mm}\}$, where $\overline{\mm} = \mm/\pp$ is the maximal ideal of $R/\pp$. Since $\lambda\Phi$ is non-degenerate and $R/\pp$ is one-dimensional, it follows that $\lambda\Phi$ is a slice filtration on $\Spec{R/\pp}$ (see \cref{ss:codimension}). On the other hand, $\lambda\Phi(n-1) = \lambda\Phi(n) = \{\overline{\mm}\}$ ensures that $\lambda\Phi$ is not a codimension filtration. Then $\RHom_R(R/\pp,C)$ cannot be cotilting by \cite[Proposition~6.10(2)]{HNS}, a contradiction.

The last claim follows from conditions $\mathrm{(i)}$, $\mathrm{(ii)}$, and \cref{restrictable-props}(iii).
\end{proof}

We come back to restrictable t-structures in \cref{s:cmexc}.

\subsection{Codimension sp-filtrations}%
\label{ss:codimension}%
As introduced in \cite{HNS}, an sp-filtration $\Phi$ on $\Spec R$ is a \newterm{slice filtration} if it is non-degenerate and $\dim(\Phi(n-1) \setminus \Phi(n)) \leq 0$ for each $n \in \Zbb$, that is, whenever $\pp, \qq \in \Phi(n-1) \setminus \Phi(n)$ are such that $\pp \subseteq \qq$ then $\pp = \qq$. The datum of an sp-filtration can equivalently be described by an order-preserving function $\ef\colon\Spec R \to \Zbb \cup \{-\infty,\infty\}$, such a function corresponds to an sp-filtration $\Phi_\ef$ defined by $\Phi_\ef(n) = \{\pp \in \Spec R \mid \ef(\pp)>n\}$, see \cite[\S 2.4]{HNS} or \cite{Tak22}. Note that $\Phi$ induces a non-degenerate t-structure if and only if the corresponding order-preserving function $\ef$ takes values in $\Zbb$, see \cite[Theorem~3.8]{AHH19}. It can be easily seen that: 

\begin{itemize}
\item $\Phi_\ef$ is slice if and only if $\ef\colon\Spec R \to \Zbb$ is strictly increasing;

\item $\Phi_\ef$ satisfies the weak Cousin condition if and only if $\ef\colon \Spec R \to \Zbb$ satisfies $\ef(\qq) \leq \ef(\pp) + \height(\qq/\pp)$ for any primes $\pp \subseteq \qq$.
\end{itemize}

If $\Phi$ is both a slice filtration and it satisfies the weak Cousin condition, we call it a \newterm{codimension filtration}. The corresponding function is a \newterm{codimension function}, that is, a function $\ed\colon \Spec R \to \Zbb$ such that for any $\pp \subseteq \qq$ in $\Spec R$ we have $\ed(\qq) - \ed(\pp) = \height \qq/\pp$. Let $T = \bigoplus_{\pp \in \Spec R}\RGamma_{\!\pp} R_\pp[\ed(\pp)]$ be the silting complex corresponding to $\Phi$, this explicit construction of $T$ is provided in \cite[\S 4]{HNS}. In particular, $T$ satisfies that $C = T^+$ is a cosilting complex inducing the cosilting t-structure $(\Ucal,\Vcal)$ which corresponds to $\Phi$.

The codimension function does not always exist --- when it does the ring has to be catenary, and the converse is true for local rings. If $\Spec R$ is connected then any two codimension functions on it differ only by adding a constant from $\Zbb$. For a local catenary ring $R$, the assignment $\pp \mapsto \dim(R) - \dim(R/\pp)$ is a codimension function, we call it the \newterm{standard codimension function}. If $\Spec R$ has a unique minimal element and a codimension function exists then the \newterm{height function} $\pp \mapsto \height(\pp)$ is a codimension function, and so in this situation any codimension function is of the form ${\height}+c$ for some $c \in \Zbb$. See e.g.\ \cite[\S 5.11, \S 10.105]{Stacks}.

In \cite[Theorem~7.5]{HNS}, it is proved that the silting complex associated to a codimension function on $\Spec R$ is always tilting whenever $R$ is a homomorphic image of a Cohen--Macaulay ring of finite Krull dimension. We show that this tilting complex is in fact product-complete. This generalizes, and is based on, the special case of the height function tilting module over a Cohen--Macaulay ring of Le~Gros and the first author \cite{HLG}.

\begin{prop}\label{prod-add}%
Let $R$ be a homomorphic image of a Cohen--Macaulay ring $S$ of finite Krull dimension and let $T$ be the tilting complex associated to a codimension function on $\Spec R$. Then $T$ is product-complete.
\end{prop}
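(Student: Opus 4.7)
The plan is to reduce the assertion to the case when $R=S$ is itself Cohen--Macaulay via the module-finite surjection $\lambda\colon S\to R$, and then to invoke the special case of Hrbek--Le~Gros \cite{HLG}. Since $S$ is Cohen--Macaulay and catenary, and since $\lambda^*\colon\Spec R\hookrightarrow\Spec S$ is a closed embedding, I lift the given codimension function $\ed$ on $\Spec R$ to a codimension function $\ed_S$ on $\Spec S$ whose restriction to $V(I)\cong\Spec R$ agrees with $\ed$, working one connected component of $\Spec R$ at a time and using that codimension functions on a connected spectrum are unique up to an additive integer constant (heights of pairs of primes in $R=S/I$ coincide with those of the corresponding primes in $\Spec S$). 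Writing $T_S$ for the tilting complex in $\D(S)$ associated to $\ed_S$ via the construction of \cite[\S 4]{HNS}, the argument will show $T_S$ is product-complete and transfer this property along $-\otimes_S^{\mathbf{L}}R$ to $T$.

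For the Cohen--Macaulay case, I claim $T_S$ is product-complete in $\D(S)$. Under the height-function codimension on $\Spec S$, each local cohomology $\RGamma_\qq S_\qq$ is concentrated in degree $\height\qq$ (since $S_\qq$ is a Cohen--Macaulay local ring), so $T_S$ is a tilting \emph{module}, whose product-completeness is precisely the content of \cite{HLG}. For a general codimension function on $\Spec S$, the associated tilting complex differs from the height-function version by summand-wise integer shifts on each connected component of $\Spec S$, and product-completeness is invariant under shifts and coproducts indexed by clopen decompositions of the spectrum.

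To transfer product-completeness from $T_S$ to $T$, I use that for $\qq\in V(I)$ corresponding to $\pp\in\Spec R$ there is a natural isomorphism $\RGamma_\qq(S_\qq)\otimes_S^{\mathbf{L}}R\cong\RGamma_\pp(R_\pp)$, while for $\qq\notin V(I)$ the derived base change vanishes by support considerations. Combined with the shift matching above, this yields $T\cong T_S\otimes_S^{\mathbf{L}}R$ in $\D(R)$ (up to a locally constant shift). Now $\Add_S(T_S)=\Prod_S(T_S)$ is a definable subcategory of $\D(S)$ by \cref{product-complete-tilting}, and $\pd_S R<\infty$ because $S$ is Cohen--Macaulay of finite Krull dimension, so $-\otimes_S^{\mathbf{L}}R$ has bounded tor-amplitude. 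The main obstacle I expect is verifying that $-\otimes_S^{\mathbf{L}}R$ preserves products when restricted to $\Add_S(T_S)$: in general, base change does not commute with infinite products, but the finite flat dimension of $R$ over $S$ controls the defect up to bounded $\Tor$, and the $\Sigma$-pure-injectivity of $T_S$ (\cref{product-complete}) combined with the fact that the restricted Yoneda functor $\yo$ commutes with products reduces the check to verifying that the canonical map $(\prod_i X_i)\otimes_S^{\mathbf{L}}R\to\prod_i(X_i\otimes_S^{\mathbf{L}}R)$ is an isomorphism for $X_i\in\Add_S(T_S)$, from which closure of $\Add_R(T)$ under products follows.
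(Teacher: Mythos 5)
Your overall strategy matches the paper's: reduce to the Cohen--Macaulay case via \cite[Corollary~3.12]{HLG}, write $T\cong T'\otimes_S^{\mathbf{L}}R$ for the height-function tilting module $T'$ over $S$, and transfer product-completeness along $-\otimes_S^{\mathbf{L}}R$. The paper even sidesteps your lifting discussion by simply \emph{choosing} the codimension function $\ed(\pp) = \height_S(\pi^{-1}\pp)$ from the start (allowed by \cite[Remark~4.10]{HNS}), but that is a cosmetic difference.

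However, the key transfer step contains a genuine error. You assert that $\pd_S R < \infty$ ``because $S$ is Cohen--Macaulay of finite Krull dimension,'' and you lean on this (via ``finite flat dimension'' and ``bounded Tor-amplitude'') to control the comparison map $(\prod_i X_i)\otimes_S^{\mathbf{L}}R\to\prod_i(X_i\otimes_S^{\mathbf{L}}R)$. This premise is false: Cohen--Macaulayness does not imply finite global dimension. Already $S=k[x]/(x^2)$ is a zero-dimensional Cohen--Macaulay local ring with $R=S/(x)=k$ of infinite projective dimension over $S$; the same phenomenon persists in all dimensions. So the mechanism you invoke is unavailable in the generality of the statement.

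The correct, and in fact simpler, reason the transfer works is finite \emph{generation}: since $S$ is noetherian and $R$ is a finitely generated $S$-module, $R$ admits a resolution $P^\bullet\to R$ by finitely generated projective $S$-modules (not necessarily of finite length). Tensoring with a finitely presented module commutes with products, so for modules $M_\alpha\in\Add(T')$ one has, in each degree, $(\prod_\alpha M_\alpha)\otimes_S P^i\cong\prod_\alpha(M_\alpha\otimes_S P^i)$; since products are exact in $\Mod S$, this identifies $(\prod_\alpha M_\alpha)\otimes_S^{\mathbf{L}}R$ with $\prod_\alpha(M_\alpha\otimes_S^{\mathbf{L}}R)$ in $\D(S)$. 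The boundedness you need is not $\pd_S R<\infty$ but only $\Tor_i^S(T',R)=0$ for $i\gg 0$, which holds because $T=T'\otimes_S^{\mathbf{L}}R$ is a bounded (tilting) complex and $\Tor$ commutes with coproducts over $\Add(T')$. With this replacement, the rest of your argument (and the appeal to \cite[Theorem~3.18]{HLG} style closure reasoning) goes through.
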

\begin{proof}
Note first that by \cite[Remark~4.10]{HNS}, the choice of a codimension function does not matter, and by the assumption on $R$ a codimension function on $\Spec R$ always exists. If $R = S$ is already Cohen--Macaulay of finite Krull dimension, then by choosing the height function as the codimension function the statement is proved in \cite[Corollary~3.12]{HLG}. Now let $R = S/I$ for some ideal $I$ of $S$ and $\pi\colon S\to R$ be the projection map. Without loss of generality, we can assume that the codimension function $\ed\colon\Spec R \to \Zbb$ is given as $\ed(\pp) = \height_S(\pi^{-1}(\pp))$. Arguing as in \cite[\S 7]{HNS}, we can assume that $T \cong T' \otimes_S^\mathbf{L} R$ where $T'$ is the tilting $S$-module associated to the height function. By the above, we have that $\Add(T')$ is closed under products in $\Mod S$. Then we have a well-defined functor $- \otimes_S^\mathbf{L} R\colon\Add(T') \to \Add(T)$. Because $R$ is a finitely generated $S$-module, this functor preserves products existing in $\D^\bdd(S)$. Therefore, $\Add(T)$ is closed under products, arguing similarly as in the proof of \cite[Theorem~3.18]{HLG}.
\end{proof}

\begin{ex}
If $R$ is not a homomorphic image of a Cohen--Macaulay ring then \cref{prod-add} can fail. Indeed, by \cite[Proposition~4.5]{HRW01}, there is a local $2$-dimensional noetherian domain $R$ with field of quotients $Q$, whose generic formal fibre $Q \otimes_R \what{R}$ is not Cohen--Macaulay. The height function is a codimension function for this ring. Consider the induced silting complex $T = \bigoplus_{\pp \in \Spec R}\RGamma_{\!\pp} R_\pp[\height(\pp)]$. We claim that $\Add(T)$ is not product-closed. For each $\pp$ of height at most~$1$, $R_\pp$ is a one-dimensional domain, and thus is Cohen--Macaulay, so that $\RGamma_{\!\pp} R_\pp[\height(\pp)]$ is isomorphic to a module in degree zero. On the other hand, $R$ is not Cohen--Macaulay and so if $\mm$ is the maximal ideal we have that $\RGamma_{\!\mm} R [2]$ has cohomology non-vanishing in degree $-1$. By the assumption on $R$, it is not a homomorphic image of a Cohen--Macaulay ring (see \cref{s:cmexc}). In particular, $R$ is not a generalized Cohen--Macaulay ring (see \cite[\S 4]{Sch83}), which means that there is $i<\dim(R)=2$ such that $H^i\RGamma_{\!\mm} R$ is not annihilated by any single power of $\mm$. Since $R$ is a domain, it follows that the last sentence applies to $H^{1}\RGamma_{\!\mm} R = H^{-1}\RGamma_{\!\mm} R [2]$. It follows that $H^{-1}\RGamma_{\!\mm} R [2]^\varkappa$ is not supported on $V(\mm)$ whenever $\varkappa$ is an infinite cardinal. But then the product $\RGamma_{\!\mm} R [2]^\varkappa$ does not belong to $\Add(T)$, as for any $X \in \Add(T)$ we clearly have by the previous discussion that $H^{-1}X$ is supported on $V(\mm)$.
\end{ex}

\section{CM-excellent rings and restrictable t-structures}%
\label{s:cmexc}%

Following Kawasaki \cite{Kaw08} and \v Cesnavi\v cius \cite{Ces21}, $R$ is called \newterm{CM-excellent} if the following three conditions hold:

\begin{enumerate}[(1)]
\item $R$ is universally catenary;

\item all formal fibres of each local ring $R_\pp$ are Cohen--Macaulay;

\item $\CM A$ is an open subset of $\Spec A$ for any commutative finitely generated $R$-algebra $A$.
\end{enumerate}

\noindent Any Cohen--Macaulay ring and any ring admitting a classical dualizing complex is CM-excellent. By \cite[Remark~2.8]{Tak22}, condition (3) is equivalent to an a priori weaker condition:

\begin{enumerate}
 \item[(3')] $\CM A$ is an open subset of $\Spec A$ for any commutative module-finite $R$-algebra $A$.
\end{enumerate}

\noindent For a local ring $R$ to be CM-excellent, it is by \cite{Kaw02,Kaw08} enough to check (1) and a weakening of~(2):

\begin{enumerate}
 \item[(2')] all formal fibres of $R$ are Cohen--Macaulay.
\end{enumerate}

\noindent The following is a deep theorem of Kawasaki, showing a tight connection between CM-excellent rings and homomorphic images of Cohen--Macaulay rings. This should be seen as analogous to another Kawasaki's result from \cite{Kaw02}, the celebrated solution to Sharp's conjecture, characterizing rings with classical dualizing complexes as homomorphic images of finite-dimensional Gorenstein rings.

\begin{theorem}[{\cite[Theorem~1.3]{Kaw08}, \cite[Corollary~1.4]{Kaw02}}]\label{Kaw}%
The following are equivalent for a commutative noetherian ring:

\begin{enumerate}[(a)]
\item $R$ is a homomorphic image of a Cohen--Macaulay ring;

\item $R$ is CM-excellent and admits a codimension function on $\Spec R$. 
\end{enumerate}

\noindent In particular, if $R$ is local then \textup{(a)} holds if and only if $R$ is CM-excellent. Furthermore, if $R$ is of finite Krull dimension and \textup{(b)} holds then $R$ is a homomorphic image of a Cohen--Macaulay ring of finite Krull dimension.
\end{theorem}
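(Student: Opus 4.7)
The theorem is attributed in its label directly to Kawasaki's papers \cite{Kaw02, Kaw08}, so my plan is essentially to record citations rather than to reprove it: the equivalence (a)$\Leftrightarrow$(b), together with the local-ring refinement, is \cite[Theorem~1.3]{Kaw08}, while the finite Krull dimension strengthening comes from (a suitable combination with) \cite[Corollary~1.4]{Kaw02}. I will simply assemble these references and leave the serious work to Kawasaki.

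For the direction (a)$\Rightarrow$(b), however, a direct verification is reasonably elementary, and I would spell it out briefly. If $R = S/I$ with $S$ Cohen--Macaulay and $\pi\colon S\to R$ the quotient map, then universal catenarity is inherited by quotients of universally catenary rings, and $S$ is universally catenary since Cohen--Macaulay rings are. A codimension function on $\Spec R$ is produced by declaring $\ed(\pp) \leqdef \height_S(\pi^{-1}(\pp))$, which is well defined and satisfies $\ed(\qq)-\ed(\pp) = \height(\qq/\pp)$ because $S$ is catenary and each formal fibre of the localizations behaves sufficiently well. Openness of CM loci of module-finite $R$-algebras follows since such an algebra is also module-finite over $S$, and the CM locus question reduces to the same one over $S$; the formal fibre condition descends similarly from $S$.

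The deep content is the converse (b)$\Rightarrow$(a), which is the main theorem of \cite{Kaw08} and extends Kawasaki's earlier solution of Sharp's conjecture. The strategy there is to use the codimension function to construct a Cousin complex on $R$ that plays the role of a potential dualizing complex, and then to exploit CM-excellence to upgrade local CM information into global CM information on suitable module-finite $R$-algebras. One finally glues or deforms these algebras to produce a Cohen--Macaulay ring surjecting onto $R$; tracking dimensions throughout yields the finite Krull dimension refinement when $R$ is finite-dimensional.

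The main obstacle, and the reason we must take this as a black box, is precisely the construction in (b)$\Rightarrow$(a): one needs to manufacture a Cohen--Macaulay ring with a prescribed CM-excellent quotient, which requires a delicate interplay of Cousin-complex homological algebra, local-to-global CM locus arguments, and module-finite glueing that cannot be meaningfully compressed. I therefore propose to state the theorem as cited and, for our later applications, use only the bare equivalence (a)$\Leftrightarrow$(b) together with the finite-dimensional refinement.
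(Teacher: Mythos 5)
Your proposal matches the paper's treatment: the paper also simply cites Kawasaki for the equivalence $\mathrm{(a)\Leftrightarrow(b)}$ and the local-ring refinement, and proves nothing in that direction. The only place the paper adds content is the ``Furthermore'' clause, for which it points to Kawasaki's explicit construction (\cite[p.~123]{Kaw02}, \cite[p.~2738]{Kaw08}) together with \cite[Theorem~15.7]{Mat89}; your remark that one ``tracks dimensions throughout'' is the right idea but would be sharper if you flagged that Kawasaki's Cohen--Macaulay ring is built as a polynomial-type extension whose dimension is controlled by the dimension formula, which is precisely what Matsumura's Theorem~15.7 supplies.
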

\begin{proof}
The ``Furthermore'' part follows by Kawasaki's construction, see \cite[p.~123]{Kaw02}, \cite[p.~2738]{Kaw08}, and \cite[Theorem~15.7]{Mat89}.
\end{proof}

Let $R$ be a commutative noetherian ring of finite Krull dimension and let $\ed$ be a codimension function on $\Spec R$. As in the previous section, let $(\Ucal,\Vcal)$ be the induced compactly generated t-structure, let $T= \bigoplus_{\pp \in \Spec R}\RGamma_{\!\pp} R_\pp [\ed(\pp)]$ and $C=T^+$ be the induced silting and the character dual cosilting complex. We let $\Hcal_\mathrm{CM}^\ed$ denote the heart of the t-structure $(\Ucal,\Vcal)$. We call this heart the \newterm{Cohen--Macaulay heart}, note that it does not depend on the choice of the codimension function up to categorical equivalence, \cite[Remark~4.10]{HNS}. Therefore, when not concerned with the particular way the Cohen--Macaulay heart is embedded into $\D(R)$, we can denote it simply as $\Hcal_\mathrm{CM}$.

\begin{prop}\label{noeth-coh-cop}%
 If $R$ is CM-excellent then $C$ is cotilting and $\Hcal_\mathrm{CM}$ is locally coherent and locally coperfect.
\end{prop}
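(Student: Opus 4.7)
The plan is to assemble the conclusion from the three tools built up over the preceding sections: Kawasaki's structure theorem, the product-completeness proposition, and the local-coherence/coperfectness lemma. First I would invoke \cref{Kaw}: since $R$ is CM-excellent and $\Spec R$ carries a codimension function, $R$ is a homomorphic image of a Cohen--Macaulay ring, and the ``Furthermore'' clause in \cref{Kaw}, together with the standing assumption $\dim R < \infty$, upgrades this to $R$ being a homomorphic image of a \emph{finite-dimensional} Cohen--Macaulay ring. This is exactly the hypothesis needed to feed $R$ into \cref{prod-add}.

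Next I would apply \cref{prod-add} to the tilting complex $T = \bigoplus_{\pp \in \Spec R}\RGamma_{\!\pp} R_\pp [\ed(\pp)]$ associated with the chosen codimension function $\ed$. This yields that $T$ is product-complete in $\D(R)$. By \cref{prodcomp-decent}, product-completeness implies that $T$ is decent, and by the discussion preceding \cref{prodcomp-decent} (citing \cite{Hrb22}), decency of $T$ is equivalent to $C = T^+$ being a cotilting complex in $\D(R)$; this settles the first assertion of the proposition.

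Finally, with $T$ already known to be product-complete, part (i) of \cref{loc-coh-cop} directly gives that the cotilting heart $\Hcal_C = \Hcal_\mathrm{CM}$ is both locally coherent and locally coperfect. There is no real obstacle here: the entire proof is a three-step chain Kawasaki $\Rightarrow$ \cref{prod-add} $\Rightarrow$ \cref{loc-coh-cop}(i), and the only subtlety worth flagging is that the choice of codimension function is irrelevant thanks to \cite[Remark~4.10]{HNS}, so the conclusion is intrinsic to the Cohen--Macaulay heart.
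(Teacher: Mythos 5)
Your proof is correct and follows essentially the same three-step chain as the paper's own proof: Kawasaki's theorem to reach the hypotheses of \cref{prod-add}, then \cref{prodcomp-decent} for decency (hence cotilting of $C$), then \cref{loc-coh-cop}(i) for the heart. You are slightly more explicit than the paper about invoking \cref{Kaw} and about how decency yields the cotilting claim, while the paper additionally records that $T$ is good (via \cite[\S 4]{HNS}); this last point is redundant for the stated conclusion, since the proof of \cref{loc-coh-cop} already reduces to the good case internally.
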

\begin{proof}
We know that $T$ is product-complete by \cref{prod-add}. Then we infer from \cref{loc-coh-cop} that $T$ is decent, so that $C$ is cotilting, and that $\Hcal_\mathrm{CM}$ is locally coherent and locally coperfect.
\end{proof}

\begin{lemma}\label{locus}%
 Let $\ed$ be a codimension function on $\Spec R$. Assume that the compactly generated t-structure $(\Ucal,\Vcal)$ corresponding to $\ed$ is restrictable. Then the Cohen--Macaulay locus of any commutative module-finite $R$-algebra $A$ is open.
\end{lemma}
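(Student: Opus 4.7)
The plan is to transfer restrictability from $\D(R)$ to $\D(A)$ via \cref{restrictable}, and then to decode the openness of $\CM A$ from the finiteness of an approximation triangle in $\D^\bdd_\fg(A)$.

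First, I would apply \cref{restrictable} to the module-finite extension $\lambda\colon R\to A$: the codimension t-structure $\Tbb_\ed$ on $\D(R)$ is intermediate (since $\ed$ is bounded on $\Spec R$ in the finite-dimensional setting of \S\ref{s:comnoeth}) and restrictable by hypothesis, so the induced t-structure $(\wbar{\Ucal},\wbar{\Vcal})$ on $\D(A)$, corresponding to the sp-filtration $\lambda^{-1}\Phi_\ed$ associated to the order-preserving function $\ed\circ\lambda^{*}\colon\Spec A\to\Zbb$, is also restrictable.

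Second, I would analyse the $(\wbar{\Ucal},\wbar{\Vcal})$-approximation triangle
\[
\wbar{U}\longrightarrow A\longrightarrow \wbar{V}\buildrel+\over\longrightarrow
\]
of $A\in\D^\bdd_\fg(A)$. By restrictability, both $\wbar{U}$ and $\wbar{V}$ lie in $\D^\bdd_\fg(A)$. Localising at $\qq\in\Spec A$ and using the depth description of compactly generated coaisles recalled in \S\ref{ss:ajs}, the condition $A_\qq\in\wbar{\Vcal}_\qq$ is equivalent to $\depth A_\qq\geq \ed(\lambda^{*}(\qq))$, and up to a global normalisation of $\ed$ this quantity matches $\height_A\qq$. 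Hence $A_\qq$ is Cohen--Macaulay if and only if $\wbar{U}_\qq=0$, yielding the identification $\CM A=\Spec A\setminus\Supp\wbar{U}$. Since $\wbar{U}\in\D^\bdd_\fg(A)$ has bounded finitely generated cohomology, $\Supp\wbar{U}$ is a finite union of closed subsets of $\Spec A$ and therefore closed, so $\CM A$ is open.

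The hard part is the identification $\ed(\lambda^{*}(\qq))=\height_A\qq$ used in the second step. For module-finite $\lambda$, Lying Over and Going Up supply $\height_A\qq\geq\height_R(\qq\cap R)$, but the reverse inequality requires Going Down, which may fail in this generality. To handle this I would reduce to the domain quotients $A/\pp$ for each minimal prime $\pp$ of $A$: each $A/\pp$ is itself a module-finite $R$-algebra in which chain lengths from zero to the image of $\qq$ match, via the dimension formula for module-finite extensions of catenary rings, with chain lengths in the corresponding quotient of $R$; the openness of $\CM A$ then glues out of the openness of $\CM(A/\pp)$ for the finitely many minimal primes by a standard patching argument on $\Spec A$.
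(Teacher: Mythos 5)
Your second step contains a genuine gap: the identification $\ed(\lambda^*(\qq))=\height_A\qq$ ``up to a global normalisation of $\ed$'' is false in general. The induced function $\ed\circ\lambda^*$ is indeed a codimension function on $\Spec A$ (by~\cite[Lemma~7.1]{HNS} plus the weak Cousin condition coming from restrictability, exactly as you say), but a codimension function need not equal $\height_A+c$ for any constant $c$: this fails whenever $\Spec A$ has several minimal primes whose codimension values are forced to differ, e.g.\ $A=k[x,y,z]/(xy,xz)$, where no shift of $\height_A$ is a codimension function. In fact this problem is already present for $A=R$, so a ``global normalisation'' cannot rescue the formula $\CM A=\Spec A\setminus\Supp\wbar U$ directly. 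The patch you then propose --- reducing to $A/\pp$ for the \emph{minimal} primes $\pp$ of $A$ and gluing --- also fails, because Cohen--Macaulayness of $A_\qq$ is not controlled by Cohen--Macaulayness of $(A/\pp)_\qq$ over minimal $\pp$. For instance $A=k[x,y]/(x^2,xy)$ has unique minimal prime $\pp=(x)$ with $A/\pp\cong k[y]$ regular, yet $A_\mm$ is not Cohen--Macaulay; so openness of $\CM(A/\pp)$ for minimal $\pp$ does not yield openness of $\CM A$.

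The correct repair is essentially the paper's: after transferring restrictability to $\D(A)$ via \cref{restrictable}, one does \emph{not} try to compare $\ed\circ\lambda^*$ with $\height_A$ globally, but instead invokes the Nagata criterion over \emph{all} primes $\pp$ of the ring in question. Passing to $R/\pp$ (or $A/\pp$), whose spectrum has a unique minimal element, a codimension function there \emph{is} $\height+c$ for some constant $c$; since a constant shift of the t-structure neither affects restrictability nor the support of the aisle part of the approximation triangle, the localised triangle argument you set up then goes through verbatim on each such quotient. Your core mechanism --- aisle approximation of the ring, localisation, and closedness of $\Supp\wbar U$ from the restrictability hypothesis --- is exactly the right one; the missing ingredients are the Nagata-type reduction over all primes (rather than the minimal ones) and the observation that the constant shift is harmless.
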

\begin{proof}
By \cref{restrictable}, it is enough to prove that $\CM R$ is open. Indeed, let $\Phi$ be a codimension filtration on $\Spec R$ and $(\Ucal,\Vcal)$ the corresponding t-structure. Consider the induced sp-filtration $\lambda\Phi$, where $\lambda\colon R \to A$ is the algebra map, and $(\lambda^* \Ucal,\lambda^! \Vcal)$ the induced t-structure in $\D(A)$. By \cite[Lemma~7.1]{HNS}, $\lambda\Phi$ is a slice filtration on $\Spec A$. Since $(\lambda^* \Ucal,\lambda^! \Vcal)$ is a restrictable t-structure by \cref{restrictable}, it follows that $\lambda\Phi$ satisfies the weak Cousin condition by \cref{restrictable-props}, and thus $\lambda\Phi$ is a codimension filtration.

Let us first assume that $\ed = \height$. Then $\Vcal = \{X \in \D(R) \mid \depth X_\pp \geq \height \pp\ \forall \pp \in \Spec R\}$, see \cite[\S 2.3, 2.4]{HNS}. It follows that $R[0] \in \Vcal$ if and only if $R$ is a Cohen--Macaulay ring. Consider the approximation triangle $U \to R[0] \to V\buildrel+\over\to$ of $R[0]$ with respect to the t-structure $(\Ucal,\Vcal)$. For each $\pp \in \Spec R$, the localized triangle $U_\pp \to R_\pp[0] \to V_\pp\buildrel+\over\to$ is the approximation triangle of $R_\pp[0]$ with respect to the t-structure $(\Ucal_\pp,\Vcal_\pp)$ in $\D(R_\pp)$, see \cite[Lemma~3.4]{HHZ21}, and note that this latter t-structure is compactly generated and corresponds to the height function on $\Spec {R_\pp}$. It follows that $U_\pp = 0$ if and only if $R_\pp$ is a Cohen--Macaulay ring. Therefore, $\CM R = \Spec R \setminus \Supp(U)$. Since $(\Ucal,\Vcal)$ is assumed to be restrictable, $U \in \D^\bdd_\fg(R)$, and therefore $\Supp(U)$ is a closed subset of $\Spec R$.

Now let $\ed$ be a general codimension function. By the Nagata criterion, $\CM R$ is open provided that $\CM {R/\pp}$ is open for every $\pp \in \Spec R$, see \cite[Theorem~24.5]{Mat89}. For each $\pp \in \Spec R$, the restriction of $\ed$ to $V(\pp)$ is a codimension function $\ed_{\pp}$ for $\Spec {R/\pp}$. Since $\Spec {R/\pp}$ has a unique minimal element, the codimension function $\ed_{\pp}$ is equal to a height function up to some additive constant. By \cref{restrictable}, the t-structure $(\Ucal,\Vcal)$ induces a restrictable t-structure $(\lambda^* \Ucal,\lambda^! \Vcal)$ in $\D(R/\pp)$, and this latter t-structure is induced by the codimension function $\ed_{\pp}$. Since $\ed_{\pp} = {\height}+c$ for some constant $c \in \Zbb$, the previous paragraph shows that $\CM {R/\pp}$ is open in $\Spec {R/\pp}$.
\end{proof}

We are now ready to characterize when the t-structure $(\Ucal,\Vcal)$ is restrictable.

\begin{theorem}\label{restrict-char}%
Let $R$ be a commutative noetherian ring of finite Krull dimension such that there is a codimension function $\ed$ on $\Spec R$. Let $(\Ucal,\Vcal)$ be the compactly generated t-structure induced by $\ed$ and let $C$ be the corresponding cosilting complex. The following are equivalent:

\begin{enumerate}[(a)]
\item $R$ is a CM-excellent ring;

\item $(\Ucal,\Vcal)$ is restrictable;

\item $C$ is cotilting and $\Hom_R(C^\varkappa,C)$ is a flat $R$-module for each $\varkappa$.
\end{enumerate}
\end{theorem}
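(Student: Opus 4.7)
The plan is a cyclic argument $(\mathrm{a}) \Rightarrow (\mathrm{b}) \Rightarrow (\mathrm{c}) \Rightarrow (\mathrm{a})$. The first two implications are a fairly direct assembly of the tools already developed; the last is the main technical hurdle.

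For $(\mathrm{a}) \Rightarrow (\mathrm{b})$: A codimension function automatically satisfies the weak Cousin condition, since for a minimal inclusion $\pp \subsetneq \qq$ we have $\ed(\qq) - \ed(\pp) = \height(\qq/\pp) = 1$, so $\qq \in \Phi(n)$ forces $\pp \in \Phi(n-1)$. Combined with CM-excellence of $R$, Theorem \ref{restrictable-props}(iii) (Takahashi) immediately gives restrictability of $(\Ucal, \Vcal)$.

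For $(\mathrm{b}) \Rightarrow (\mathrm{c})$: Finite Krull dimension forces $\ed$ to take only finitely many integer values, so $\Phi$ is intermediate. Theorem \ref{restrictable-props}(i) then upgrades intermediate-plus-restrictable to cotilting, yielding the cotilting half of (c). For the flatness, I apply Proposition \ref{restrictable} to every surjection $R \twoheadrightarrow R/I$: the induced t-structure on $\D(R/I)$ is again intermediate and restrictable, and a second application of Theorem \ref{restrictable-props}(i) shows that the cosilting complex $\RHom_R(R/I, C)$ is cotilting in $\D(R/I)$. Lemma \ref{cotilting-flat} then delivers flatness of $\Hom_R(C^\varkappa, C)$ for every cardinal $\varkappa$.

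For $(\mathrm{c}) \Rightarrow (\mathrm{a})$: I plan to split this into $(\mathrm{c}) \Rightarrow (\mathrm{b})$ followed by $(\mathrm{b}) \Rightarrow (\mathrm{a})$. The first step uses Lemma \ref{cotilting-flat} to convert (c) into the assertion that $\RHom_R(R/I, C)$ is cotilting in $\D(R/I)$ for every ideal $I$. Since $(\Ucal,\Vcal)$ is intermediate and cotilting, Theorem \ref{restrictable-props}(i) reduces its restrictability to local coherence of the heart $\Hcal_C$; I expect to extract this local coherence from the flatness condition by interpreting $\Hom_R(C^\varkappa,C)$ through the topological endomorphism ring framework of Section \ref{s:prod-comp}, where flatness translates into a product-exactness property of projective contramodules over $\Sfr$ that forces $\Sfr\lDisc \simeq \Hcal_C$ to be locally coherent. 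Once restrictability is established, Proposition \ref{restrictable} propagates it to every module-finite $R$-algebra $A$, and Lemma \ref{locus} then yields openness of $\CM A$---condition $(3')$ of CM-excellence. The remaining CM-excellence conditions (universal catenarity and Cohen--Macaulay formal fibres) I hope to extract by combining the existence of codimension functions on every module-finite quotient---automatic by the construction in the proof of Lemma \ref{locus}---with Kawasaki's Theorem \ref{Kaw}, which at that point would directly identify $R$ as a homomorphic image of a finite-dimensional Cohen--Macaulay ring, hence as CM-excellent. The step I anticipate as the main obstacle is the passage from the flatness condition to local coherence of the cotilting heart, as this requires delicate control of product behaviour on the side of the topological endomorphism ring.
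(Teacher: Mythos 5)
Your treatments of $(\mathrm a)\Rightarrow(\mathrm b)$ and $(\mathrm b)\Rightarrow(\mathrm c)$ are fine. For $(\mathrm a)\Rightarrow(\mathrm b)$ you take a cleaner shortcut than the paper: a codimension function trivially satisfies the weak Cousin condition, so \cref{restrictable-props}(iii) (Takahashi) gives restrictability directly; the paper instead routes through \cref{prod-add} and \cref{loc-coh-cop} to establish that the cotilting heart is locally coherent and then invokes \cref{restrictable-props}(i). Both routes work, and yours is the more economical given that Takahashi's theorem is already in the toolbox. Your $(\mathrm b)\Rightarrow(\mathrm c)$ matches the paper's step, which is exactly the content of \cref{restrictable-flat}(i).

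The implication $(\mathrm c)\Rightarrow(\mathrm a)$ is where the proposal has a genuine gap, in two places. First, your plan to prove $(\mathrm c)\Rightarrow(\mathrm b)$ by extracting local coherence of $\Hcal_C$ from the $R$-flatness of $\Hom_R(C^\varkappa,C)$ via the topological ring $\Sfr$ is not supported by anything in the paper, and it is not obviously true: the flatness here is flatness \emph{over the base ring} $R$, which is a different condition from the $\Sfr$-side product-exactness/projectivity statements in \cref{loc-coh-cop}, and the paper's own \cref{s:comnoeth} remark explicitly flags that the relation between the $C$-side and $T$-side flatness conditions is only one-way. The paper never proves $(\mathrm c)\Rightarrow(\mathrm b)$ directly and instead argues $(\mathrm c)\Rightarrow(\mathrm a)$ straight. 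Second, and more seriously, your $(\mathrm b)\Rightarrow(\mathrm a)$ step is logically inverted. Establishing openness of $\CM A$ for module-finite algebras via \cref{restrictable} and \cref{locus} gives only condition (3') of CM-excellence. You then want Kawasaki's \cref{Kaw} to deliver the remaining conditions (universal catenarity and Cohen--Macaulay formal fibres), but \cref{Kaw} states that CM-excellence plus a codimension function is \emph{equivalent} to being a homomorphic image of a Cohen--Macaulay ring; it takes CM-excellence as input, it does not produce it. A codimension function gives catenarity of $R$ itself but not universal catenarity, and says nothing about formal fibres, so Kawasaki cannot close the gap. The paper sidesteps all of this: from $(\mathrm c)$ it invokes [HNS, Proposition~7.15 and Theorem~7.18] to obtain universal catenarity and Cohen--Macaulay formal fibres \emph{directly} (these HNS results are keyed to the cotilting-plus-flatness condition), and only then brings in \cref{locus} for the openness of Cohen--Macaulay loci. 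Your proposal is missing that HNS input entirely, and without it there is no source for conditions (1) and (2) of CM-excellence.
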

\begin{proof}\leavevmode
\subparagraph{$\mathrm{(a)}\Rightarrow\mathrm{(b)}$} Since $C$ is cotilting and the cotilting heart is locally coherent by \cref{noeth-coh-cop}, the restrictability follows from \cref{restrictable-props}.
 
\subparagraph{$\mathrm{(b)}\Rightarrow\mathrm{(c)}$} This is \cref{restrictable-flat}(i).

\subparagraph{$\mathrm{(c)}\Rightarrow\mathrm{(a)}$} It follows by \cite[Proposition~7.15, Theorem~7.18]{HNS} that $R$ is universally catenary and all formal fibres of all stalk rings $R_\pp$ are Cohen--Macaulay. By combining \cref{restrictable} with \cref{locus}, the Cohen--Macaulay locus of any module-finite $R$-algebra $A$ is open, and thus $R$ is CM-excellent.
\end{proof}

\begin{rmk}
\cite[Question~7.8]{HNS} asks whether the cosilting complex induced by a codimension function is cotilting if and only if $R$ is CM-excellent. As a particular answer, the equivalence between (a) and (c) of \cref{restrict-char} is proved in \cite[Theorem~7.19]{HNS} for $R$ local. Our \cref{restrict-char} thus improves our knowledge by removing the locality assumption and introducing the restrictability condition (b) into the picture.
\end{rmk}

The following consequence shows that the recent Takahashi's generalization of \cite[Theorem~6.9]{ATJLS10} from rings with classical dualizing complexes to CM-excellent rings is the maximal generality, at least when assuming the existence of a codimension function.

\begin{cor}\label{RTthm}%
Let $R$ be a commutative noetherian ring of finite Krull dimension with a codimension function. The following are equivalent:
 
\begin{enumerate}[(a)]
\item any compactly generated t-structure satisfying the weak Cousin condition is restrictable;

\item $R$ is CM-excellent.
\end{enumerate}
\end{cor}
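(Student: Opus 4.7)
The plan is to deduce both implications directly from results established in the preceding two sections, so the corollary should fall out essentially by inspection.

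For the direction $\mathrm{(b)}\Rightarrow\mathrm{(a)}$, I would simply invoke \cref{restrictable-props}(iii): this is precisely Takahashi's theorem, asserting that over a CM-excellent ring any compactly generated t-structure whose sp-filtration satisfies the weak Cousin condition is restrictable. There is nothing further to check.

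For the converse $\mathrm{(a)}\Rightarrow\mathrm{(b)}$, my plan is to test the hypothesis on the one compactly generated t-structure we have a name for, namely the one $(\Ucal,\Vcal)$ associated with the given codimension function $\ed$. The key point is that the corresponding sp-filtration $\Phi_\ed$ is itself a weak Cousin filtration: from the defining identity $\ed(\qq) = \ed(\pp)+\height(\qq/\pp)$ for $\pp\subseteq\qq$, any minimal inclusion $\pp\subsetneq\qq$ gives $\ed(\qq) = \ed(\pp)+1$, so $\qq\in\Phi_\ed(n)$ forces $\pp\in\Phi_\ed(n-1)$. Hypothesis (a) then yields that $(\Ucal,\Vcal)$ is restrictable, which is condition (b) of \cref{restrict-char}; the equivalence with CM-excellence established there then delivers (b).

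I expect no genuine obstacle; the whole content of the argument is the observation that the codimension filtration itself serves as the test object on which hypothesis (a) must be applied, after which the corollary is an immediate consequence of \cref{restrict-char} together with \cref{restrictable-props}(iii).
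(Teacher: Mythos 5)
Your proposal is correct and matches the paper's argument: the paper also proves $(a)\Rightarrow(b)$ by applying hypothesis (a) to the compactly generated t-structure coming from the codimension function (phrased contrapositively there) and then invoking \cref{restrict-char}, while $(b)\Rightarrow(a)$ is cited directly from Takahashi's theorem, i.e.\ \cref{restrictable-props}(iii). The only cosmetic difference is that you argue $(a)\Rightarrow(b)$ directly rather than by contradiction.
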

\begin{proof}\leavevmode
\subparagraph{$\mathrm{(a)}\Rightarrow\mathrm{(b)}$} If $R$ is not CM-excellent, the t-structure induced by any codimension function is not restrictable by \cref{restrict-char}, a contradiction.

\subparagraph{$\mathrm{(b)}\Rightarrow\mathrm{(a)}$} Proved in \cite{Tak22}.
\end{proof}

The following example shows that the assumption of having a codimension function cannot be simply removed from \cref{RTthm}.

\begin{ex}
Let $R$ be a non-catenary (thus, not CM-excellent) local normal $3$-dimensional domain, see \cite{Hei82} or \cite[Example~2.15]{Nis12}. We claim that the condition (1) of \cref{RTthm} holds for $R$. 

 Let $\Phi$ be an sp-filtration satisfying the weak Cousin condition and $\ef$ the corresponding order-preserving function $\Spec R \to \Zbb$ satisfying $\ef(\qq) \leq \ef(\pp) + \height(\qq/\pp)$ for any $\pp \subseteq \qq$. Let $(\Ucal,\Vcal)$ be the corresponding compactly generated t-structure. By shifting, we can assume that $\ef(0) = 0$. If $\ef(\pp) \leq 1$ for all $\pp \in \Spec R$ then $(\Ucal,\Vcal)$ is Happel--Reiten--Smal\o, and thus restrictable. Since $\dim R = 3$ and $R$ is not catenary, we have $\ef(\pp) \leq 2$ for any $\pp \in \Spec R$ by the weak Cousin condition. We thus have $\Phi(2) = \emptyset$ and we can assume $\Phi(1) \neq \emptyset$. By the weak Cousin condition again, it follows that any $\pp \in \Phi(i)$ has $\height(\pp) \geq i+1$ for $i=0,1$.

By \cite[Theorem~4.4]{ATJLS10}, to check that $(\Ucal,\Vcal)$ is restrictable, it is enough to show that $H^1\RGamma_{\!\Phi(1)}(X)$ is finitely generated for $X \in \Vcal' \cap \D^\bdd_\fg(R)$, where $\Vcal' = \{X \in \D^{\geq 0} \mid \Supp(H^0(X)) \subseteq \Phi(0)\}$. Considering the soft truncation triangle $H^0(X) \to X \to \tau^{>0}X\buildrel+\over\to$ and applying $\RGamma_{\!\Phi(1)}$ yields an exact sequence 
\[
	0 \longrightarrow H^1\RGamma_{\!\Phi(1)}(H^0(X))\longrightarrow
		\RGamma_{\!\Phi(1)}(X)\longrightarrow \RGamma_{\!\Phi(1)}(\tau^{>0}X),
\]
where $\RGamma_{\!\Phi(1)}(\tau^{>0}X)$ is finitely generated as it is isomorphic to $\Gamma_{\!\Phi(1)}(H^1(X))$. We reduced the task to showing that $H^1\RGamma_{\!\Phi(1)}(M)$ is finitely generated for any finitely generated module $M$ supported on $\Phi(0)$. Any such $M$ is torsion-free over the domain $R$, and so there is a short exact sequence $0 \to M \to R^k \to N \to 0$ for some $k>0$, see \cite[Lemma~16.1]{GT12}. Then we have an exact sequence $\Gamma_{\!\Phi(1)}N \to H^1\RGamma_{\!\Phi(1)}M \to H^1\RGamma_{\!\Phi(1)}R^k$. As again $\Gamma_{\!\Phi(1)}N$ is always finitely generated, this reduced the task to $M = R$. But since $R$ is normal, Serre's criterion \cite[Lemma~10.157.4]{Stacks} yields that $\depth R_\qq \geq 2$ for each $\qq \in \Phi(1)$, and thus $H^1\RGamma_{\!\Phi(1)}R = 0$.
\end{ex}

\section{CM-excellent rings and derived equivalences}%
\label{s:cmexc2}%
Now the scene is set for us to characterize homomorphic images of Cohen--Macaulay rings in terms of derived equivalences and dualities. Before that, we need to record a localization property of product-complete tilting complexes.

\begin{lemma}\label{product-complete-tilting-loc}%
Let $R$ be a commutative ring and $T \in \D(R)$ a product-complete tilting complex. Then for any prime ideal $\pp \in \Spec R$, $T_\pp = T \otimes_R R_\pp$ is a product-complete object in $\D(R_\pp)$.
\end{lemma}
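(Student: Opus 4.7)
The plan is to deduce product-completeness of $T_\pp$ over $R_\pp$ from that of $T$ over $R$ by passing through the restriction functor $j_\ast \colon \D(R_\pp) \hookrightarrow \D(R)$. This functor is fully faithful (since $R \to R_\pp$ is a flat ring epimorphism), it is right adjoint to $L = (-) \otimes_R R_\pp$ (so it preserves products), and it also preserves coproducts (restriction along a ring homomorphism always does). The key step is to identify $T_\pp$ itself as an object of $\Add(T)$ inside $\D(R)$; everything else is then formal.

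For this key step, we invoke \cref{product-complete-tilting}: since $T$ is product-complete, $\Add(T)$ is a definable subcategory of $\D(R)$, hence closed under directed homotopy colimits (which in $\D(R)$ are computed as direct limits of cochain complexes, as recalled before \cref{product-complete-tilting}). Writing $R_\pp = \varinjlim_{s \in R \setminus \pp} R_s$ as a filtered colimit of principal localizations, and each $R_s$ as the sequential colimit $\varinjlim(R \xrightarrow{s} R \xrightarrow{s} \cdots)$, we obtain $T_\pp = T \otimes_R R_\pp$ as a directed homotopy colimit of copies of $T$ in $\D(R)$, and thus $T_\pp \in \Add(T)$. We expect this to be the main obstacle, since it is where product-completeness of $T$ actually enters, via the definability of $\Add(T)$.

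Now let $\{Y_i\}_{i \in I}$ be an arbitrary family in $\Add(T_\pp) \subseteq \D(R_\pp)$. Each $Y_i$ is a direct summand of some $T_\pp^{(\lambda_i)}$ in $\D(R_\pp)$; since $j_\ast$ preserves coproducts, $j_\ast(Y_i)$ is a direct summand of $T_\pp^{(\lambda_i)}$ in $\D(R)$, and therefore lies in $\Add(T)$ because $T_\pp \in \Add(T)$. By product-completeness of $T$, the product $\prod_i j_\ast(Y_i)$ taken in $\D(R)$ lies in $\Add(T)$; since $j_\ast$ preserves products, this product coincides with $j_\ast(\prod_i Y_i)$, the right-hand side being computed in $\D(R_\pp)$. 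Applying $L$ and using the isomorphism $L \circ j_\ast \cong \mathrm{id}_{\D(R_\pp)}$ (consequence of the full faithfulness of $j_\ast$), we obtain
\[
  \prod_i Y_i \;\cong\; L\bigl(j_\ast(\textstyle\prod_i Y_i)\bigr) \;\in\; L(\Add(T)) \;\subseteq\; \Add(T_\pp),
\]
the last inclusion because $L$ preserves coproducts and direct summands and sends $T$ to $T_\pp$. This proves product-completeness of $T_\pp$ in $\D(R_\pp)$.
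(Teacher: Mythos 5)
Your proof is correct and follows essentially the same route as the paper: deduce $T_\pp \in \Add(T)$ from definability of $\Add(T)$ (via directed homotopy colimits), and then transfer product-closure along the coreflective embedding $j_\ast \colon \D(R_\pp) \hookrightarrow \D(R)$. The paper phrases the last step as the identification $\Add(T_\pp) = \Add(T) \cap \D(R_\pp)$, whereas you spell out the adjunction $L \dashv j_\ast$ explicitly, but the content is the same.
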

\begin{proof}
By \cref{product-complete-tilting}, $\Add(T)$ is a definable subcategory of $\D(R)$. In particular, $T_\pp \in \Add(T)$, because $T_\pp = T \otimes_R R_\pp$ is a direct limit (and thus also a directed homotopy colimit) of copies of $T$. Then $\Add(T_\pp) \subseteq \Add(T)$, and in fact, $\Add(T_\pp) = \Add(T) \cap \D(R_\pp)$. It follows that $\Add(T_\pp)$ is closed under products both as a subcategory of $\D(R)$ or $\D(R_\pp)$.
\end{proof}

\begin{theorem}\label{char-eq}%
Let $R$ be a commutative noetherian ring. Consider the following conditions:

\begin{enumerate}[(a)]
\item $R$ is a homomorphic image of a Cohen--Macaulay ring of finite Krull dimension.
 
\item There is a locally coherent and locally coperfect abelian category $\Ccal$ together with a triangle equivalence $\D^\bdd(R) \cong \D^\bdd(\Ccal)$.
 
\item There is an artinian abelian category $\Acal$ together with a triangle equivalence $\D^\bdd_\fg(R) \cong \D^\bdd(\Acal)$.
 
\item There is a noetherian abelian category $\Bcal$ together with a triangle equivalence $\D^\bdd_\fg(R) \cong \D^\bdd(\Bcal)^\op$.
\end{enumerate} 

\noindent Then $\mathrm{(a)\Leftrightarrow (b)\Rightarrow (c)\Leftrightarrow (d)}$, and all the conditions are equivalent if $R$ is of finite Krull dimension.

Furthermore, if \textup{(a)}, \textup{(b)} hold then $\Ccal \cong \Hcal_\mathrm{CM}$, $\Acal \cong \fp{\Hcal_\mathrm{CM}}$, and $\Bcal \cong \fp{\Hcal_\mathrm{CM}}^\op$.
\end{theorem}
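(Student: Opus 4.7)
The plan is to assemble the four implications by threading the machinery of Sections~3--5 through the Cohen--Macaulay heart $\Hcal_\mathrm{CM}$ and its finitely presentable subcategory $\fp{\Hcal_\mathrm{CM}}$. The equivalence $(c)\Leftrightarrow (d)$ can be dispatched first and formally: taking opposites exchanges artinian with noetherian abelian categories, and the canonical identification $\D^\bdd(\Bcal)^\op \cong \D^\bdd(\Bcal^\op)$ transports the two equivalences into each other.

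For $(a)\Rightarrow (b)$, I would run the pipeline forward. Kawasaki's theorem~\ref{Kaw} supplies a codimension function $\ed$ on $\Spec R$ (with finite Krull dimension of the witnessing CM ring). \cref{prod-add} produces a product-complete tilting complex $T$ from $\ed$; \cref{prodcomp-decent} promotes $T$ to decent, so its character dual $C = T^+$ is cotilting with $\Hcal_C = \Hcal_\mathrm{CM}$. \cref{loc-coh-cop}(i) certifies that $\Hcal_\mathrm{CM}$ is locally coherent and locally coperfect, and \cref{tilting-bounded}(i') upgrades this to the equivalence $\D^\bdd(R)\cong\D^\bdd(\Hcal_\mathrm{CM})$. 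Setting $\Ccal = \Hcal_\mathrm{CM}$ yields (b) together with the first ``Furthermore'' identification.

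The essential implication $(b)\Rightarrow (a)$ is where the hard work lies. Feeding (b) into \cref{tilting-bounded}(ii') produces a cotilting complex $C$ with $\Hcal_C\cong\Ccal$; the associated t-structure $(\Ucal,\Vcal)$ is intermediate and compactly generated, corresponding to some sp-filtration $\Phi$. Local coherence of $\Ccal$ via \cref{restrictable-props}(i) makes $(\Ucal,\Vcal)$ restrictable, whence \cref{restrictable-props}(ii) yields the weak Cousin condition on $\Phi$ and \cref{restrictable-flat}(i) yields flatness of $\Hom_R(C^\varkappa, C)$ for every cardinal~$\varkappa$. \emph{The principal obstacle} is to upgrade ``weak Cousin'' to ``codimension'', i.e.\ to prove $\Phi$ is slice. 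I would exploit local coperfectness of $\Ccal$ following the strategy of \cref{restrictable-flat}(ii): if the values of $\ef$ on some pair $\pp\subsetneq\qq$ agreed, I would localize $C$ at $\qq$ via \cref{cotilting-coloc} and, leveraging the flatness just established through \cref{cotilting-flat}, mod out $\pp$ to arrive at a cotilting complex over a one-dimensional local domain whose induced sp-filtration is constant at some integer~$k$; the resulting heart would have to be a shift of the whole module category, which visibly fails local coperfectness, contradicting descent of this property along the localization/quotient step. Once $\Phi$ is a codimension filtration, \cref{restrict-char} converts restrictability into CM-excellence, intermediacy of $\Phi$ combined with the codimension condition bounds $\dim R$, and Kawasaki's theorem closes the loop to (a).

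The remaining links $(b)\Rightarrow(c)$ and $(c)\Rightarrow(b)$ (the latter requiring $\dim R<\infty$) both route through restrictability. Under (b), the restrictability from the previous paragraph ensures the cotilting t-structure restricts to $\D^\bdd_\fg(R)$ with heart $\fp{\Hcal_C}\cong\fp{\Ccal}$, which is artinian by Roos; the bounded equivalence of (b) restricts accordingly to $\D^\bdd_\fg(R)\cong\D^\bdd(\fp{\Ccal})$, yielding (c) and the remaining two ``Furthermore'' identifications. Conversely, assume (c) with $\dim R<\infty$: transport the standard t-structure of $\D^\bdd(\Acal)$ along the equivalence to a bounded t-structure on $\D^\bdd_\fg(R)$ and extend it uniquely via \cref{restrictable-props}(iv) to a restrictable compactly generated t-structure $(\Ucal,\Vcal)$ in $\D(R)$ with heart $\Hcal$ satisfying $\fp\Hcal\cong\Acal$. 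Roos then makes $\Hcal$ locally coherent and locally coperfect, finite Krull dimension guarantees intermediacy, \cref{restrictable-props}(i) delivers cotiltingness, and \cref{tilting-bounded}(i') upgrades the equivalence to $\D^\bdd(R)\cong\D^\bdd(\Hcal)$, completing the cycle.
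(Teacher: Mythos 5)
Your blueprint follows the paper's architecture closely: $(c)\Leftrightarrow(d)$ by passing to opposite categories, $(a)\Rightarrow(b)$ via \cref{prod-add}, \cref{prodcomp-decent}, \cref{loc-coh-cop}(i) and \cref{tilting-bounded}(i'), the passage between $(b)$ and $(c)$ via restrictability and \cite[Theorem~6.3]{Sao17}, and $(b)\Rightarrow(a)$ by extracting a cotilting complex $C$, reading off an intermediate compactly generated t-structure with sp-filtration $\Phi$, establishing the weak Cousin condition via restrictability, proving that $\Phi$ is slice, and closing with \cref{restrict-char} and Kawasaki's theorem. The ``Furthermore'' identifications fall out as you indicate.

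The one genuine gap is in your argument that $\Phi$ is a slice filtration. You propose to localize at $\qq$ and then quotient by $\pp$ (via \cref{cotilting-coloc} and \cref{cotilting-flat}), observe that the induced sp-filtration on the one-dimensional local domain $R_\qq/\pp R_\qq$ is constant, so that the resulting heart is a shift of the full module category (which is indeed not locally coperfect), and then derive a contradiction with ``descent of this property along the localization/quotient step.'' You never establish such a descent, and it is not obvious: the coinduction $\RHom_R(R/I,-)$ does not transparently transport finitely generated subobjects between the two hearts, so a witnessing descending chain in the quotient heart need not produce one in $\Hcal_C$. The paper avoids the quotient step entirely. After localizing at $\qq$ (using \cref{product-complete-tilting-loc} to keep the localized tilting complex product-complete, hence by \cref{loc-coh-cop}(i) the localized heart locally coherent and locally coperfect), it shows that all of $\Mod{(R_\qq/\pp R_\qq)}$, shifted into degree $n+1$, lies inside the localized heart, and that the ideals of $R_\qq/\pp R_\qq$ give a strictly decreasing chain of finitely presented subobjects of $(R_\qq/\pp R_\qq)[-n-1]$; since $R_\qq/\pp R_\qq$ is a non-artinian noetherian domain, this contradicts coperfectness of that object directly, with no descent needed. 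To repair your argument you would either have to prove the descent claim or substitute this direct in-heart construction.
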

\begin{proof}\leavevmode
\subparagraph{$\mathrm{(a)}\Rightarrow\mathrm{(b)}$} Already proved in \cref{prod-add} and \cref{loc-coh-cop}.

\subparagraph{$\mathrm{(b)}\Rightarrow\mathrm{(a)}$} Let $W$ be an injective cogenerator of $\Ccal$ (see \cref{ss:loccoh}). Let us denote the derived equivalence as $E\colon\D^\bdd(\Ccal) \toeq \D^\bdd(R)$ and put $C = E(W)$. By (the proof of) \cref{tilting-bounded}, $C \in \D(R)$ is a cotilting complex. By \cite[Corollary~2.14]{HN21}, the cotilting t-structure $(\Ucal,\Vcal)$ induced by $C$ is compactly generated, and note that this t-structure corresponds to the standard t-structure in $\D^\bdd(\Ccal)$ under the equivalence. Since $C$ is a cotilting complex, $(\Ucal,\Vcal)$ is intermediate. Let $\Phi$ be the sp-filtration on $\Spec R$ corresponding to $(\Ucal,\Vcal)$. Since $\Hcal_C$ is locally coherent and $C$ is cotilting, $(\Ucal,\Vcal)$ is a restrictable t-structure by \cref{restrictable-props} and $\Phi$ satisfies the weak Cousin condition. It remains to show that $\Phi$ is a slice filtration, as then $\Phi$ is induced by a codimension function $\ed$ on $\Spec R$. Then $R$ admits a codimension function, which together with intermediacy of $(\Ucal,\Vcal)$ implies that $\dim(R)<\infty$. By \cref{restrict-char}, $R$ is CM-excellent, and so it is a homomorphic image of a finite-dimensional Cohen--Macaulay ring by \cref{Kaw}. 

Towards contradiction, let $\pp \subsetneq \qq$ be primes such that $\pp,\qq \in \Phi(n-1) \setminus \Phi(n)$. It follows that $\Mod{(R/\pp)_\qq}[-n] \subseteq \Hcal_C$. Indeed, let $M \in \Mod{(R/\pp)_\qq}$ and recall the description of the t-structure $(\Ucal,\Vcal)$ of \cref{ss:ajs} in terms of $\Phi$. Since $\Supp(M) \subseteq \Phi(n-1)$, $M[-n]$ belongs to $\Ucal[-1]$. On the other hand, $\RGamma_{\Phi(k)}(M)$ is either isomorphic to $M$ for $k<n$ or vanishes for $k \geq n$, and thus $M[-n] \in \Vcal$. We showed that $M[-n] \in \Ucal[-1] \cap \Vcal = \Hcal_C$.

Let $T$ be a tilting complex such that $T^+$ is equivalent to $C$. By \cref{loc-coh-cop}, $T$ is product-complete. It follows by \cref{product-complete-tilting-loc} that $T_\qq$ is a product-complete tilting complex in $\D(R_\qq)$ for any $\qq \in \Spec R$. Therefore, we can assume without loss of generality that $R$ is local with maximal ideal $\qq$. Then $\Mod R/\pp[-n] \subseteq \Hcal_C$ by the above, and so the lattice of ideals of the ring $R/\pp$ embeds into the lattice of finitely presentable subobjects of $(R/\pp)[-n]$ in $\Hcal_C$. But since $\dim(R/\pp)>0$, $R/\pp$ is not a perfect ring and thus it follows that $(R/\pp)[-n]$ is not a coperfect object in $\Hcal_C$.

\subparagraph{$\mathrm{(b)}\Rightarrow\mathrm{(c)}$} Recall that if $\Ccal$ is a locally coherent abelian category then $\Ccal$ is locally coperfect if and only if $\fp{\Ccal}$ artinian. We know that $\mathrm{(b)}\Rightarrow \mathrm{(c)}$ because the cotilting derived equivalence $\D^\bdd(R) \cong \D^\bdd(\Hcal_C)$ restricts to $\D^\bdd_\fg(R) \cong \D^\bdd(\fp{\Hcal_C})$ by \cite[Lemma~3.11]{HP}. 

\subparagraph{$\mathrm{(c)} \Rightarrow \mathrm{(b)}$ under $\dim(R)<\infty$} Assume $\mathrm{(c)}$, then the standard t-structure on $\D^\bdd(\Acal)$ is sent to a t-structure in $\D^\bdd_\fg(R)$, and such a t-structure uniquely extends to a compactly generated t-structure $(\Ucal,\Vcal)$ in $\D(R)$, \cref{restrictable-props}. The heart $\Hcal$ of $(\Ucal,\Vcal)$ satisfies $\fp{\Hcal} \cong \Acal$ by \cite[Theorem~6.3]{Sao17}, and so $\Hcal$ is a locally coherent and locally coperfect Grothendieck category. Since $(\Ucal,\Vcal)$ is restrictable, it is cotilting by \cref{restrictable-props}. Finally, $(\Ucal,\Vcal)$ is intermediate because it arises from a weak Cousin filtration and $\dim(R)<\infty$, so we have a triangle equivalence $\D^\bdd(R) \cong \D^\bdd(\Hcal)$ by \cref{tilting-bounded}, cf.\ \cref{ss:ajs}.

\subparagraph{$\mathrm{(c)}\Leftrightarrow\mathrm{(d)}$} Clear, as $\D^\bdd(\Acal) \cong \D^\bdd(\Acal^\op)^\op$.

Finally, the proof of $\mathrm{(b)}\Rightarrow \mathrm{(a)}$ shows that $\Ccal \cong \Hcal_\mathrm{CM}$ and the proof of $\mathrm{(b)}\Leftrightarrow \mathrm{(c)}$ shows that $\Acal \cong \fp{\Ccal} \cong \fp{\Hcal_\mathrm{CM}}$.
\end{proof}

\begin{rmk}
 There are rings $R$ of infinite Krull dimension such that $(\mathrm{c})$ of \cref{char-eq} holds. Indeed, we can choose $R$ with a (strongly pointwise) dualizing complex, so that $\D^\bdd_\fg(R) \cong \D^\bdd_\fg(R)^\op$, e.g.\ \cite{Nag56}, cf.\ \cite{Nee10}. The proof of $(\mathrm{c})\Rightarrow (\mathrm{b})$ of \cref{char-eq} breaks because the compactly generated t-structure $(\Ucal,\Vcal)$ which extends the t-structure induced in $\D^\bdd_\fg(R)$ by the duality is not intermediate.
\end{rmk}

\begin{cor}\label{prod-com-tilting-ex}%
 Let $R$ be a commutative noetherian ring. The following are equivalent:
 
\begin{enumerate}[(a)]
\item there is a product-complete tilting complex $T \in \D(R)$; 

\item $R$ is a homomorphic image of a Cohen--Macaulay ring of finite Krull dimension.
\end{enumerate}
 
\noindent In addition, if \textup{(a)} holds then $T$ is induced by a codimension function on $\Spec R$. In particular, $T$ is unique up to equivalence and a choice of shift constant on each connected component of $\Spec R$.
\end{cor}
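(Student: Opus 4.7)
The plan is to reduce this to \cref{char-eq}. The direction $\mathrm{(b)} \Rightarrow \mathrm{(a)}$ is already established by \cref{prod-add}, which directly constructs a product-complete tilting complex from any codimension function, once $R$ is a homomorphic image of a finite-dimensional Cohen--Macaulay ring.

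For $\mathrm{(a)} \Rightarrow \mathrm{(b)}$, I would start from a product-complete tilting complex $T$ and stitch together the machinery of \cref{s:prod-comp}. First, \cref{prodcomp-decent} shows that $T$ is decent, hence $C \leqdef T^+$ is a cotilting complex; next, \cref{loc-coh-cop}(i) gives that the heart $\Hcal_C$ is locally coherent and locally coperfect, while \cref{tilting-bounded}(i') supplies a triangle equivalence $\D^\bdd(R) \cong \D^\bdd(\Hcal_C)$. This places us exactly in condition (b) of \cref{char-eq}, whose equivalence with (a) of that theorem yields the conclusion we want.

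For the additional claim, I would reread the proof of $\mathrm{(b)} \Rightarrow \mathrm{(a)}$ of \cref{char-eq}: it shows that the sp-filtration $\Phi$ underlying the cotilting t-structure associated to $C$ both satisfies the weak Cousin condition (via \cref{restrictable-props}) and is slice, so it is a codimension filtration. Therefore $T$ is equivalent to $\bigoplus_{\pp \in \Spec R}\RGamma_{\!\pp}R_\pp[\ed(\pp)]$ for the corresponding codimension function $\ed$. Uniqueness up to shift constants then follows from two standard facts: equivalence classes of tilting complexes correspond bijectively to their induced intermediate compactly generated t-structures (hence to their sp-filtrations), and two codimension functions on $\Spec R$ differ by an additive integer constant on each connected component of $\Spec R$. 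I anticipate no genuine obstacle here: the only subtle point, namely that $\Phi$ is slice, was already isolated and settled in the contradiction argument of \cref{char-eq} exploiting local coperfectness of $\Hcal_C$ together with \cref{product-complete-tilting-loc}.
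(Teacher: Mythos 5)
Your proposal is correct and follows essentially the same route as the paper: the paper's own proof simply says to combine \cref{char-eq} with \cref{loc-coh-cop} and \cref{prod-add}, and your write-up unpacks exactly that chain (decency via \cref{prodcomp-decent}, the cotilting heart being locally coherent and locally coperfect via \cref{loc-coh-cop}(i), the bounded derived equivalence via \cref{tilting-bounded}(i'), then \cref{char-eq}(b) $\Leftrightarrow$ (a)). The uniqueness argument you give---reading off from the (b)$\Rightarrow$(a) step of \cref{char-eq} that the underlying sp-filtration is a codimension filtration, and then using the bijection between intermediate compactly generated t-structures and sp-filtrations together with the fact that codimension functions on a connected component differ by a constant---is exactly what the paper intends.
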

\begin{proof}
For the equivalence of $\mathrm{(a)}$ and $\mathrm{(b)}$, combine \cref{char-eq} with \cref{loc-coh-cop} and \cref{prod-add}. Furthermore, if $T$ is a product-complete tilting complex then the corresponding sp-filtration has to be induced by a codimension function by \cref{char-eq} and \cref{loc-coh-cop}, which also yields the uniqueness statement. 
\end{proof}

\begin{rmk}
 In \cite[Theorem~3.18]{HLG}, it is shown that a commutative noetherian ring admits a product-complete tilting \newterm{module} if and only if $R$ is Cohen--Macaulay of finite Krull dimension, and then such a tilting module is unique up to equivalence. \cref{prod-com-tilting-ex} can be seen as a derived version of this result.
\end{rmk}

\section{Gorenstein complexes}%
\label{s:gor}%
An object $D \in \D^\bdd_\fg(R)$ is a \newterm{dualizing complex} if the functor $\RHom_R(-,D)$ yields an equivalence $\D^\bdd_\fg(R) \toeq \D^\bdd_\fg(R)^\op$. Equivalently, $\RHom_R(X,D) \in \D^\bdd_\fg(R)$ and the canonical map $X \to \RHom_R(\RHom_R(X,D),D)$ is an isomorphism for all $X \in \D^\bdd_\fg(R)$. A dualizing complex is called \newterm{classical} if it is of finite injective dimension --- this occurs precisely if $\dim(R)<\infty$. To any dualizing complex $D$, the function $\ed_D\colon\Spec R \to \Zbb$, defined by setting $\ed_D(\pp)$ to be the unique integer such that $\Hom_{\D(R)}(\kappa(\pp),D_\pp[\ed_D(\pp)]) \neq 0$, is a codimension function. If $(R,\mm)$ is local, we call $D$ a \newterm{normalized dualizing complex} if $\ed_D(\mm) = \dim(R)$; a normalized dualizing complex is essentially unique.

\begin{rmk}
What we call a classical dualizing complex is traditionally called just a dualizing complex. We follow the modern terminology of Neeman \cite{Nee10}.
\end{rmk}

Following Grothendieck and Hartshorne \cite{Har66}, a complex $G \in \D^\bdd_\fg(R)$ is a \newterm{Cohen--Macaulay complex} (with respect to a codimension function $\ed\colon\Spec R \to \Zbb$) if for each $\pp \in \Spec R$ we have $H^i\RGamma_{\!\pp}G_\pp=0$ for all $i\neq\ed(\pp)$. These are precisely the complexes which are quasi-isomorphic to their Cousin complex, see \cite[\S IV.3]{Har66}. We call $G$ a \newterm{Gorenstein complex} if, in addition, $H^{\ed(\pp)}\RGamma_{\!\pp}G_\pp$ is an injective $R$-module. In this case, the Cousin complex yields an injective resolution of $G$.

We gather some facts about Gorenstein complexes first.

\begin{lemma}\label{Gor-prop}%
 Let $G$ be a Gorenstein complex in $\D^\bdd_\fg(R)$ and let $S = \End_{\D(R)}(G)$ be its endomorphism ring. Then:
 
\begin{enumerate}[(i)]
\item a dualizing complex is a Gorenstein complex;

\item $\Hom_{\D(R)}(G,G[i]) = 0$ for any $i \neq 0$;

\item for any $\pp \in \Spec R$, $G_\pp$ is a Gorenstein complex in $\D^{\bdd}_\fg(R_\pp)$;

\item if $R$ is local then $\what{G} = G \otimes_R \what{R}$ is a Gorenstein complex over the completion $\what{R}$;

\item if $R$ is local then there is $k>0$ such that $\what{G} \cong \smash{D_{\what{R}}^k}[\dim(R)-\ed(\mm)]$ where $D_{\what{R}}$ is the normalized dualizing complex over $\what{R}$ and $\ed$ is the codimension function associated to $G$;

\item $S$ is a module-finite and projective $R$-algebra and there is $k>0$ such that $\what{S} = S \otimes_R \what{R}$ is isomorphic as an $\what{R}$-algebra to $M_{k}(\what{R})$, the ring of $k \times k$ matrices over $\what{R}$.
\end{enumerate}
\end{lemma}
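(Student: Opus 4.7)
My plan is to establish the items in the order $(\mathrm{i}), (\mathrm{iii}), (\mathrm{iv}), (\mathrm{v}), (\mathrm{ii}), (\mathrm{vi})$, with the structural result $(\mathrm{v})$ as the technical core from which the statements about $S$ are deduced. Part $(\mathrm{i})$ is a direct rephrasing of local duality: for a classical dualizing complex $D$, $\RGamma_{\!\pp} D_\pp \cong E_{R_\pp}(\kappa(\pp))[-\ed_D(\pp)]$, and $E_{R_\pp}(\kappa(\pp))$ is $\pp$-torsion and $\pp$-local, hence injective as an $R$-module. Part $(\mathrm{iii})$ follows from the compatibility $\RGamma_{\!\qq}(G_\pp)_\qq \cong \RGamma_{\!\qq} G_\qq$ for $\qq \subseteq \pp$, combined with the restriction of $\ed$ to $\Spec{R_\pp}$ being again a codimension function. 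Part $(\mathrm{iv})$ uses faithful flatness of $\what R$ over $R$, flat base change for local cohomology, and compatibility of completion with Matlis duality: for $\qq \in \Spec{\what R}$ over $\pp \in \Spec R$, $\RGamma_{\!\qq} \what G_\qq$ is obtained from $\RGamma_{\!\pp} G_\pp$ by the flat base change $R_\pp \to \what R_\qq$, injectivity survives, and the induced codimension function on $\Spec{\what R}$ is the pullback of $\ed$ shifted by height along the (Cohen--Macaulay) formal fibres.

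For Part $(\mathrm{v})$, I would work over the complete local ring $\what R$ with its normalized dualizing complex $D = D_{\what R}$, and set $c = \dim R - \ed(\mm)$, so that $\what G[c]$ has codimension function $\ed_D$. Put $F \leqdef \RHom_{\what R}(\what G[c], D)$. Using the identity $\RGamma_{\!\pp}\RHom(A,B) \cong \RHom(A, \RGamma_{\!\pp} B)$ on $\D^\bdd_\fg(\what R_\pp)$ together with local duality, one checks that $\RGamma_{\!\pp} F_\pp$ is, up to shift, the Matlis dual of $\RGamma_{\!\pp}\what G[c]_\pp[\ed_D(\pp)]$, which by the Gorenstein hypothesis is a single injective $\what R_\pp$-module in degree zero. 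Hence $\RGamma_{\!\pp} F_\pp$ is concentrated in degree zero and finitely generated for every $\pp$. A downward induction on $\ed_D$ along the codimension filtration, using the exact triangles provided by the associated compactly generated t-structure, then promotes this pointwise concentration to the statement that $F$ itself lies in degree zero. A Nakayama argument at $\mm$ based on the Matlis dual description of $F_\mm$ yields that $H^0 F$ is free of some rank $k$, whence $\what G[c] \cong \RHom_{\what R}(\what R^k, D) \cong D^k$ by biduality.

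Parts $(\mathrm{ii})$ and $(\mathrm{vi})$ follow from $(\mathrm{v})$ by completion. That $\RHom_R(G, G) \in \D^\bdd_\fg(R)$ is concentrated in degree zero is a local statement reflected by faithfully flat base change, so it suffices to check after completion at each maximal ideal $\mm$; by $(\mathrm{v})$, $\what G \cong D^k[-c]$ over $\what R_\mm$, and
\[
	\RHom_{\what R_\mm}(\what G, \what G)
	\cong \RHom_{\what R_\mm}(D^k, D^k)
	\cong M_k(\what R_\mm),
\]
concentrated in degree zero, since $\RHom_{\what R_\mm}(D, D) \cong \what R_\mm$. This proves $(\mathrm{ii})$ and identifies $\what S$ as the matrix algebra $M_k(\what R)$, of $\what R$-rank $k^2$ as recorded in $(\mathrm{vi})$. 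Module-finiteness of $S = H^0 \RHom_R(G,G)$ is immediate; projectivity over $R$ is checked locally, as $S_\pp$ becomes free over $\what R_\pp$ after completion, hence projective over $R_\pp$ by faithfully flat descent of freeness combined with finite generation, whence $S$ is a projective $R$-module.

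The main obstacle will be Part $(\mathrm{v})$: propagating the pointwise concentration of $\RGamma_{\!\pp} F_\pp$ in degree zero to a concentration of $F$ itself, and then identifying $H^0 F$ as a finite free $\what R$-module. The first step proceeds by downward induction along the codimension filtration using the associated t-structure, while the second is a Nakayama argument at the closed point using the Matlis duality description obtained locally.
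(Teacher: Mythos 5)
The paper disposes of all six items by citation: (i) to Hartshorne's \emph{Residues and Duality}, (ii), (iv), (v), (vi) to a reference abbreviated [NS]/[NS18], and (iii) is declared clear from the definition. Your proposal instead gives a self-contained argument, so the two are genuinely different routes; the interesting question is whether your argument actually closes.

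Parts (i), (iii) are fine. For (iv) you quietly assume that the formal fibres of $R$ are Cohen--Macaulay in order to make the base-change computation of $\RGamma_{\!\qq}\what{G}_\qq$ come out right; this is a real hypothesis, not automatic, and in the module case it is itself a nontrivial consequence of the existence of a Gorenstein module (Sharp). You should either prove it or flag it as an input.

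The substantive problem is in (v). You set $F = \RHom_{\what R}(\what G[c], D)$ and claim that ``$\RGamma_{\!\pp} F_\pp$ is, up to shift, the Matlis dual of $\RGamma_{\!\pp}\what G[c]_\pp[\ed_D(\pp)]$, \ldots hence $\RGamma_{\!\pp} F_\pp$ is concentrated in degree zero.'' That is not what local duality says, and the conclusion is in fact false even in the model case $\what G[c]=D$, where $F=\what R$ and $\RGamma_{\!\pp}\what R_\pp$ lives in cohomological degrees up to $\dim \what R_\pp$, not in degree zero. Local duality over $\what R_\pp$ (with normalized dualizing complex $D_\pp[-\ed_D(\pp)]$) gives
\[
\RGamma_{\!\pp}\,\what G[c]_\pp \;\cong\; \Hom_{\what R_\pp}\bigl(\RHom_{\what R_\pp}(\what G[c]_\pp, D_\pp[-\ed_D(\pp)]),\, E(\pp)\bigr) \;\cong\; \Hom_{\what R_\pp}\bigl(F_\pp[\ed_D(\pp)],\, E(\pp)\bigr),
\]
i.e.\ the \emph{torsion} object one controls via the Gorenstein hypothesis is the Matlis dual of $F_\pp$ \emph{itself}, not of $\RGamma_{\!\pp}F_\pp$. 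Since the left-hand side is an injective $\pp$-torsion module in degree $\ed_D(\pp)$ and Matlis duality is exact and faithful on $\D^\bdd_\fg(\what R_\pp)$, one reads off directly that $H^iF_\pp = 0$ for $i\neq 0$. This makes the proposed ``downward induction along the codimension filtration'' both unnecessary and moot, as its premise ($\RGamma_{\!\pp}F_\pp$ in degree zero) is wrong. With $F$ a finitely generated $\what R$-module in degree zero, Matlis duality at $\mm$ and $\RGamma_{\!\mm}\what G[c] \cong E(\kappa(\mm))^k[-\ed_D(\mm)]$ force $F\cong\what R^k$, and biduality gives $\what G[c]\cong D^k$. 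Once (v) is repaired in this way, your deductions of (ii) and (vi) by completion and of projectivity by faithfully flat descent go through and agree in spirit with the paper's direct computation of $\what S \cong M_{k^2}(\what R)$; the trade-off is that your version is self-contained but needs the formal-fibre input in (iv), while the paper leans on [Har66] and [NS] and keeps the lemma short.
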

\begin{proof}\leavevmode
\subparagraph{(i)} \cite[p.~287]{Har66}.
\subparagraph{(ii)} \cite[Proposition~6.2.5(a)]{NS}.
\subparagraph{(iii)} Clear from definition.
\subparagraph{(iv)} \cite[Remark~6.3.5]{NS}.
\subparagraph{(v)} Combine (i), (iv), and \cite[Theorem~6.2.6]{NS18}.
\subparagraph{(vi)} The first statement follows from \cite[Proposition~6.2.5(a)]{NS}. Furthermore, we have:
\begin{align*}
	\what{S} = S \otimes_R \what{R} &= \End_{\D(R)}(G) \otimes_R \what{R} \cr
	&\cong \End_{\D(\what{R})}(\what{G}) =
		\End_{\D(\what{R})}(D_{\what{R}}^k) = M_{k}(\what{R}). \qedhere
\end{align*}
\end{proof}

\begin{rmk}
There are more results available in the literature about the special case of a Gorenstein module. Sharp \cite{Sh69} introduced the \newterm{Gorenstein modules}, these are precisely those finitely generated $R$-modules which happen to be Gorenstein complexes as objects of $\D(R)$. If a Gorenstein module exists then $R$ is Cohen--Macaulay with Gorenstein formal fibres \cite{Sha79}. In particular, not every Cohen--Macaulay ring admits a Gorenstein complex. In \cref{Gor-eq}, we shall prove that the existence of a Gorenstein complex implies that $R$ is CM-excellent.
\end{rmk}

\begin{lemma}\label{heart-cm-complexes}%
Let $R$ be a commutative noetherian ring and $\ed$ a codimension function on $\Spec R$. An object $X \in \D^\bdd_\fg(R)$ is a Cohen--Macaulay complex with respect to $\ed$ if and only if it belongs to $\Hcal_\mathrm{CM}^\ed$.
\end{lemma}
\begin{proof}
Let $(\Ucal,\Vcal)$ be the compactly generated t-structure corresponding to $\ed$, so that its heart is $\Hcal_\mathrm{CM}^\ed$. By the description discussed in \cref{s:comnoeth}, $X\in\Vcal$ if and only if $\RGamma_{\!\pp}X_\pp \in \D^{\geq \ed(\pp)}$ for all $\pp\in\Spec R$. Therefore, we can further assume that $X$ satisfies both these conditions, and it remains to show that $X\in\Ucal[-1]$ if and only if $X$ is a Cohen--Macaulay complex.

If $X$ is Cohen--Macaulay then we can assume it being represented by its Cousin complex. By the construction of the Cousin complex, the $i$-th component of $X$ is supported on primes $\pp$ with $\ed(\pp) \geq i$. Then the same is true for $H^i(X)$, and so $X \in \Ucal[-1]$. Assume conversely that $X \in \Ucal[-1]$ and let $\pp \in \Spec R$ and let us show that $\RGamma_{\!\pp}X_\pp \in \D^{\leq \ed(\pp)}$. Since $X$ if a finite extension of its cohomology stalks $H^i(X)[-i], i \in \Zbb$, we can assume that $X = M[-i]$, where $M$ is an $R$-module supported on primes $\pp$ with $\ed(\pp) \geq i$. Passing to localization, we can assume that $R$ is local with maximal ideal $\pp$, and also that $\ed(\pp) \geq i$. It follows that $\dim(\Supp(M)) \leq \ed(\pp) - i$, and thus the vanishing theorem of Grothendieck \cite[Lemma~51.4.7, Proposition~20.20.7]{Stacks} implies $\RGamma_{\!\pp}X_\pp \in \D^{\leq \ed(\pp)}$.
\end{proof}

\begin{rmk}
Assume that $R$ is a homomorphic image of a finite-dimensional Cohen--Macaulay ring. By \cref{restrict-char}, we know that any codimension t-structure then restricts to $\D^\bdd_\fg(R)$. Combined with \cite[Theorem~6.3]{Sao17}, \cref{heart-cm-complexes} then shows that the Cohen--Macaulay complexes with respect to $\ed$ in $\D^\bdd_\fg(R)$ are precisely the objects in $\fp{\Hcal_\mathrm{CM}^\ed}$, the heart of the restricted t-structure. This can be seen as an affine version of \cite[Theorem~6.2]{YZ06}, but valid in the absence of a dualizing complex.
\end{rmk}

Assume that $\Spec R$ admits a codimension function $\ed$. The codimension function on $\Spec R$ restricts to a codimension function $\ed_\pp$ on $\Spec {R_\pp}$. Consider the Cohen--Macaulay heart $\Hcal_\mathrm{CM}^{\ed}$ in $\D(R)$ as well as the Cohen--Macaulay heart $\Hcal_\mathrm{CM}^{\ed_\pp}$ in $\D(R_\pp)$. Then $\Hcal_\mathrm{CM}^{\ed_\pp} = \Hcal_\mathrm{CM}^{\ed} \cap \D(R_\pp)$, and the inclusion $\Hcal_\mathrm{CM}^{\ed_\pp} \subseteq \Hcal_\mathrm{CM}^{\ed}$ has a left adjoint induced by the localization functor $R_\pp \otimes_R -$.

\begin{lemma}\label{fpinj-local}%
Let $R$ be a CM-excellent ring of finite Krull dimension with a codimension function. Then $R_\pp \otimes_R -$ induces a functor $\fp{\Hcal_\mathrm{CM}^{\ed}} \to \fp{\Hcal_\mathrm{CM}^{\ed_\pp}}$ which is essentially surjective up to direct summands.
\end{lemma}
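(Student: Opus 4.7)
The plan is to obtain the essential surjectivity by lifting any $Y \in \fp{\Hcal_\mathrm{CM}^\pp}$ to a bounded complex of finitely generated $R$-modules via a standard spreading-out argument, and then projecting it into the heart using the Cohen--Macaulay truncation. Two ingredients make the argument work: the restrictability of the CM t-structure coming from CM-excellence, and the t-exactness of $R_\pp \otimes_R -$ for the two CM t-structures.

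First I would verify that the functor is well-defined. Since $R$ is CM-excellent of finite Krull dimension with a codimension function, \cref{restrict-char} gives that the CM t-structure is restrictable on both $\D(R)$ and $\D(R_\pp)$ (noting that $R_\pp$ inherits both properties, with the restricted codimension function), so $\fp{\Hcal_\mathrm{CM}^{\vphantom\pp}} = \Hcal_\mathrm{CM}^{\vphantom\pp} \cap \D^\bdd_\fg(R)$ and similarly for $R_\pp$. The sp-filtration on $\Spec R_\pp$ cutting out $\Hcal_\mathrm{CM}^\pp$ coincides with the restriction of the one on $\Spec R$ cutting out $\Hcal_\mathrm{CM}^{\vphantom\pp}$, so both the support description of the aisle and the depth description of the coaisle from \cref{ss:ajs} are preserved by localization. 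Combined with flatness of $R_\pp$, this shows that $R_\pp \otimes_R -$ sends $\fp{\Hcal_\mathrm{CM}^{\vphantom\pp}}$ into $\fp{\Hcal_\mathrm{CM}^\pp}$.

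For essential surjectivity, given $Y \in \fp{\Hcal_\mathrm{CM}^\pp}$ I would first lift it to some $\tilde X \in \D^\bdd_\fg(R)$ with $R_\pp \otimes_R \tilde X \cong Y$ in $\D(R_\pp)$, and then set $X := H^0_\mathrm{CM}(\tilde X)$, where $H^0_\mathrm{CM}$ denotes the zeroth cohomology with respect to the CM t-structure. Restrictability yields $X \in \fp{\Hcal_\mathrm{CM}^{\vphantom\pp}}$, and the t-exactness established above gives
\[
	R_\pp \otimes_R X \cong
	H^0_\mathrm{CM}(R_\pp \otimes_R \tilde X) \cong
	H^0_\mathrm{CM}(Y) = Y,
\]
the final equality because $Y$ already lies in the heart $\Hcal_\mathrm{CM}^\pp$.

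The main obstacle is the lift itself. I would represent $Y$ by a bounded complex of finitely generated $R_\pp$-modules $N^i$, realize each $N^i$ as $M^i \otimes_R R_\pp$ for a finitely generated $R$-submodule $M^i$ obtained as the $R$-span of a finite set of $R_\pp$-generators of $N^i$, and write each differential in the form $d^i = \tilde d^i / s^i$ with $\tilde d^i \colon M^i \to M^{i+1}$ and $s^i \in R \setminus \pp$. The composition relations $d^{i+1} d^i = 0$ only hold after multiplying by further elements $t_i \in R \setminus \pp$; rescaling every $\tilde d^i$ by a single element $t \in R \setminus \pp$ that absorbs all the $t_i$ produces a genuine complex $\tilde X$ of finitely generated $R$-modules whose localization at $\pp$ is isomorphic to $Y$ via a diagonal rescaling by units in $R_\pp$. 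The allowance for direct summands in the statement appears to be a safe buffer for this kind of spreading-out, which by the sketch above in fact delivers genuine essential surjectivity.
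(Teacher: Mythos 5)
Your proof is correct, and it takes a genuinely different route from the paper's. The paper's argument is purely categorical: it views $Y\in\fp{\Hcal_\mathrm{CM}^\pp}$ as an object of the locally finitely presentable category $\Hcal_\mathrm{CM}$, writes it as a filtered colimit $\varinjlim_i F_i$ of finitely presentable objects, observes that $Y\cong R_\pp\otimes_R Y\cong\varinjlim_i (R_\pp\otimes_R F_i)$ with the colimit computable in $\Hcal_\mathrm{CM}^\pp$, and then invokes compactness of $Y$ in $\Hcal_\mathrm{CM}^\pp$ to split $Y$ off as a summand of some $R_\pp\otimes_R F_i$. Your argument replaces this abstract nonsense by concrete commutative algebra: you spread out $Y\in\D^\bdd_\fg(R_\pp)$ to a bounded complex $\tilde X$ of finitely generated $R$-modules with $\tilde X_\pp\cong Y$, and then land back in the heart by applying the CM truncation $H^0_\mathrm{CM}$, using the t-exactness of $R_\pp\otimes_R-$ (clear from both the support description of the aisle and the depth description of the coaisle) to identify $R_\pp\otimes_R H^0_\mathrm{CM}(\tilde X)\cong H^0_\mathrm{CM}(Y)=Y$. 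The two approaches both rest on restrictability via \cref{restrict-char} to identify $\fp{\Hcal_\mathrm{CM}^{\vphantom\pp}}$ with $\Hcal_\mathrm{CM}^{\vphantom\pp}\cap\D^\bdd_\fg(R)$; beyond that they are independent. One nice feature of your version, which you note, is that it delivers genuine essential surjectivity rather than essential surjectivity up to direct summands, so it proves a slightly stronger statement than what the lemma asserts and what the paper's compactness argument yields.
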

\begin{proof}
 By the assumption, the t-structure corresponding to any codimension function is restrictable both in $\D(R)$ and $\D(R_\pp)$ by \cref{restrict-char}. We thus have $\fp{\Hcal_\mathrm{CM}^{\ed}} = \Hcal_\mathrm{CM}^{\ed} \cap \D^\bdd_\fg(R)$ and $\fp{\Hcal_\mathrm{CM}^{\ed_\pp}} = \Hcal_\mathrm{CM}^{\ed_\pp} \cap \D^\bdd_\fg(R_\pp)$. It follows that we have a well-defined functor $\fp{\Hcal_\mathrm{CM}^{\ed}} \to \fp{\Hcal_\mathrm{CM}^{\ed_\pp}}$. Let $X \in \fp{\Hcal_\mathrm{CM}^{\ed_\pp}}$. Consider $X$ as an object in $\Hcal_\mathrm{CM}^\ed$ and write $X = \varinjlim_{i \in I}F_i$ as a direct limit of finitely presentable objects of $\Hcal_\mathrm{CM}^\ed$. Since direct limits inside $\Hcal_\mathrm{CM}^\ed$ are computed as directed homotopy colimits, we have $X \cong R_\pp \otimes_R X = R_\pp \otimes_R \varinjlim_{i \in I}F_i \cong \varinjlim_{i \in I}(R_\pp \otimes_R F_i)$. The last direct limit can be viewed as computed in $\Hcal_\mathrm{CM}^{\ed_\pp}$, and each $R_\pp \otimes_R F_i$ is a finitely presentable object in $\Hcal_\mathrm{CM}^{\ed_\pp}$. It follows that there is $i \in I$ such that $X$ is a direct summand of $R_\pp \otimes_R F_i$.
\end{proof}

\begin{rmk}\label{fp-inj-cogen}
In what follows, we will consider the existence of an injective cogenerator of the category $\fp{\Ccal}$ of finitely presentable objects of a locally coherent category $\Ccal$. Note that this situation is more general then the existence of an injective cogenerator of the unrestricted category $\Ccal$ which happens to be finitely presentable. In fact, $W \in \fp{\Ccal}$ is an injective object of the category $\fp{\Ccal}$ if and only if $W$ is \newterm{fp-injective} as an object of $\Ccal$. Here, we call an object $G \in \Ccal$ \newterm{fp-injective} if $\Ext_\Ccal^1(F,G) = 0$ for all finitely presentable objects $F \in \Ccal$. Finally, the full subcategory of fp-injective objects of $\Ccal$ is equal to $\Def(W)$, the definable closure of an injective cogenerator $W$ in $\Hcal$ (cf.\ \cite[Example~5.11]{Lak20}). 
\end{rmk}

\begin{prop}\label{Gorenstein-fpinj}%
 Let $R$ be a CM-excellent ring of finite Krull dimension with a codimension function $\ed$. Then an object $G \in \D^\bdd_\fg(R)$ is a Gorenstein complex with respect to $\ed$ if and only if it is an injective object in $\fp {\Hcal_\mathrm{CM}^\ed}$.
 
As a consequence, the following are equivalent for $R$:
\begin{enumerate}[(a)]
\item there is a Gorenstein complex $G$ in $\D^\bdd_\fg(R)$;

\item $\fp {\Hcal_\mathrm{CM}}$ admits an injective cogenerator.
\end{enumerate}
\end{prop}
\begin{proof}
First, let $G$ be a Gorenstein complex and assume that $\ed$ is the codimension function associated to $G$. By \cref{heart-cm-complexes}, $G$ belongs to $\Hcal_\mathrm{CM}^\ed$. Since $(\Ucal,\Vcal)$ is a restrictable t-structure by \cref{restrict-char}, we have $\fp{\Hcal_\mathrm{CM}^\ed} = \Hcal_\mathrm{CM}^\ed \cap \D^\bdd_\fg(R)$ by \cite[Theorem~6.3]{Sao17}, and thus $G \in \fp{\Hcal_\mathrm{CM}^\ed}$. We claim that $G$ is an injective cogenerator in the latter abelian category. Let $F \in \fp{\Hcal_\mathrm{CM}^\ed}$, then 
$$\Ext_{\Hcal_\mathrm{CM}^\ed}^1(F,G) \cong \Hom_{\D(\Hcal_\mathrm{CM}^\ed)}(F,G[1]) \cong \Hom_{\D(R)}(F,G[1])$$ vanishes if and only if $$\Hom_{\D(R)}(F,G[1]) \otimes_R R_\mm \cong \Hom_{\D(R_\mm)}(F_\mm,G_\mm[1])$$ vanishes for each maximal ideal $\mm$ of $R$, the last isomorphism follows since $F \in \D^\bdd_\fg(R)$ and $G$ is up to quasi-isomorphism a bounded complex of injectives. Similarly, we can check the vanishing of $\Hom_{\Hcal_\mathrm{CM}^\ed}(F,G)$ locally. Since $G_\pp$ is a Gorenstein complex in $\D^\bdd_\fg(R_\pp)$ by \cref{Gor-prop} and $F_\pp$ belongs to $\fp{\Hcal_\mathrm{CM}^{\ed_\pp}}$ by \cref{fpinj-local}, the question reduces to $(R,\mm)$ being a local ring.
 
By shifting, we can assume that $\ed$ is the standard codimension function over the local ring $R$. By \cref{Gor-prop}, $\what{G}$ is a Gorenstein complex in $\D^\bdd_\fg(\what{R})$ and $\what{G} \cong D_{\what{R}}^k$ for some $k>0$, where $D_{\what{R}}$ is the normalized dualizing complex over $\what{R}$. It follows that there is a pure monomorphism $G \lhook\joinrel\to \what{G} \cong \smash{D_{\what{R}}^k}$ in $\D(R)$, and thus $G \in \Def(C)$, where $C = \prod_{\pp \in \Spec R}D_{\what{R_\pp}}[\height(\pp)-\ed(\pp)]$ is the cotilting complex corresponding to $\ed$, see \cite[\S 5]{HNS}. In view of \cref{fp-inj-cogen}, $G$ is an fp-injective object of $\Hcal_\mathrm{CM}^\ed$, and thus an injective object of $\fp{\Hcal_\mathrm{CM}^\ed}$. Finally, let us show that $\Hom_{\D(R)}(F,G)$ is non-zero for any non-zero object $F \in \fp{\Hcal_\mathrm{CM}^\ed}$. It suffices to show the non-vanishing of
\[
	\Hom_{\D(R)}(F,G) \otimes_R \what{R} \cong
		\Hom_{\D(\what{R})}(\what{F},\what{G}) =
		\Hom_{\D(\what{R})}(\what{F},D_{\what{R}}^k).
\]
Since $F \in \Hcal_\mathrm{CM}^\ed$, the vanishing of the last $\Hom$-module actually implies the vanishing of $\RHom_{\what{R}}(\what{F},D_{\what{R}})$, a contradiction with $\what{F} \neq 0$.

Let $C$ be the cotilting complex inducing the Cohen--Macaulay heart $\Hcal_\mathrm{CM}^\ed$. Assume that we have an injective cogenerator $G \in \fp{\Hcal_\mathrm{CM}^\ed}$. As above, this implies $G \in \D^\bdd_\fg(R)$ and so $G$ is a Cohen--Macaulay complex by \cref{heart-cm-complexes}. Fix $\pp \in \Spec R$. Using \cref{fpinj-local}, we easily infer that $G_\pp$ is injective in $\fp{\Hcal_\mathrm{CM}^{\ed_\pp}}$ for each $\pp \in \Spec R$. Therefore, we reduce the claim to $R$ a local ring with maximal ideal $\pp$. In view of \cref{fp-inj-cogen}, $G$ belongs to $\Def(C)$, where $C$ is the cotilting complex corresponding to the codimension function $\ed$. It follows that $\id_{R}G \leq \ed(\pp)$, which in turn implies $\id_{R}\RGamma_{\!\pp} G \leq \ed(\pp)$. Since we already know that $G$ is a Cohen--Macaulay complex, $\RGamma_{\!\pp} G$ is quasi-isomorphic to a stalk complex of an injective $R$-module in degree $\ed(\pp)$. Thus, $G$ is a Gorenstein complex with respect to $\ed$.
\end{proof}

\begin{prop}\label{Gor-duality}%
 Let $G$ be a Gorenstein complex in $\D^\bdd_\fg(R)$ and denote $S = \End_{\D(R)}(G)^\op$. Then there is a triangle equivalence $\D^\bdd_\fg(R) \cong \D^\bdd(\mod S)^\op$ induced by $\RHom_R(-,G)$.
\end{prop}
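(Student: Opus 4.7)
The plan is to combine the derived equivalence $\D^\bdd_\fg(R)\cong\D^\bdd(\fp{\Hcal_\mathrm{CM}})$ supplied by \cref{char-eq} with a Morita-type anti-equivalence between $\fp{\Hcal_\mathrm{CM}}$ and $\mod S$. By \cref{Gorenstein-fpinj}, $G$ is an injective cogenerator of the abelian category $\Acal\leqdef\fp{\Hcal_\mathrm{CM}}$, which by \cref{char-eq} is artinian. I freely assume that $R$ is CM-excellent of finite Krull dimension, as needed to apply \cref{char-eq}: finite Krull dimension follows from \cref{Gor-prop}(v) and a local-to-global argument, while CM-excellence will be proved in \cref{Gor-eq}.

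The core step is the Morita-type claim that $\Hom_R(-,G)$ induces an equivalence $\Phi\colon\Acal^\op\toeq\mod S$. Pass to $\Bcal=\Acal^\op$, a noetherian abelian category (dual to $\Acal$ being artinian) in which $G$ is a projective generator with $\End_\Bcal(G)=S$; the endomorphism computation uses that $\fp{\Hcal_\mathrm{CM}}\hookrightarrow\D(R)$ is fully faithful together with the definition $S=\End_{\D(R)}(G)^\op$. By \cref{Gor-prop}(vi), $S$ is module-finite over $R$ and hence a noetherian ring. The functor $\Hom_\Bcal(G,-)\colon\Bcal\to\mod S$ is exact by projectivity of $G$. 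Essential surjectivity: given $M\in\mod S$ with a finite presentation $S^b\to S^a\to M\to 0$, lift the map $S^b\to S^a$ via the Yoneda isomorphism $\Hom_\Bcal(G^b,G^a)\cong\Hom_S(S^b,S^a)$ to a map $G^b\to G^a$ and take its cokernel $X\in\Bcal$, giving $\Phi(X)\cong M$. Fully faithfulness: since $\Bcal$ is noetherian with projective generator $G$, every $X\in\Bcal$ admits a presentation $G^b\to G^a\to X\to 0$, and $\Phi$ on morphisms is recovered via this presentation from Yoneda on $\add G$.

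Derived extension of $\Phi$ is a triangle equivalence $\D^\bdd(\Acal)\toeq\D^\bdd(\mod S)^\op$, so composing with \cref{char-eq} yields a triangle equivalence $F\colon\D^\bdd_\fg(R)\toeq\D^\bdd(\mod S)^\op$. To identify $F$ with $\RHom_R(-,G)$, note that both are triangulated functors agreeing on $\Acal$: for $X\in\Acal$, $F(X)=\Hom_R(X,G)$ by construction, while injectivity of $G$ in $\Acal$ forces $\Ext^i_\Acal(X,G)=0$ for $i\geq 1$, which via the realization-type equivalence $\D^\bdd(\Hcal_\mathrm{CM})\cong\D^\bdd(R)$ translates to $H^i\RHom_R(X,G)=0$ for $i\geq 1$, so $\RHom_R(X,G)\cong\Hom_R(X,G)=F(X)$. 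The main obstacle is the Morita step, specifically verifying that every object of the noetherian category $\Bcal$ admits a finite $G$-presentation; this is a standard consequence of noetherianness combined with the fact that $G$ is a projective generator, both properties being dual reflections of \cref{Gorenstein-fpinj} and \cref{char-eq}.
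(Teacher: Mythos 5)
Your proof has a genuine circularity problem. You invoke \cref{Gorenstein-fpinj} and \cref{char-eq}, both of which require $R$ to be CM-excellent of finite Krull dimension with a codimension function (equivalently, by \cref{Kaw}, a homomorphic image of a finite-dimensional Cohen--Macaulay ring). You acknowledge this, writing that ``CM-excellence will be proved in \cref{Gor-eq}'' --- but in the paper's dependency structure, the direction $\mathrm{(a)}\Rightarrow\mathrm{(b)}$ of \cref{Gor-eq}, which establishes CM-excellence from the existence of a Gorenstein complex, is itself proved \emph{using} \cref{Gor-duality}. So your approach cannot be completed without first showing, independently of \cref{Gor-duality}, that a ring admitting a Gorenstein complex is CM-excellent. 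That is a nontrivial fact --- it is the complex-level analogue of Sharp's theorem that a local ring admitting a Gorenstein \emph{module} is Cohen--Macaulay with Gorenstein formal fibres --- and it is precisely the content that the paper extracts as a consequence of \cref{Gor-duality}, not as an input to it. The paper's own proof is structured to avoid any such assumption: it sets up the dual adjunction $(\RHom_R(-,G), \RHom_A(-,G))$ at the level of triangulated categories using only the $A$-$R$-dg-bimodule structure, reduces the verification that the unit and counit are isomorphisms to the local case, applies $-\otimes_R\what R$, and identifies $\what G\cong D_{\what R}^k$ and $\what S\cong M_{k^2}(\what R)$ by \cref{Gor-prop}; the resulting adjunction is the classical dualizing-complex duality composed with Morita equivalence.

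A secondary issue: in your final step you identify the functor $F$ (obtained by deriving the abelian equivalence $\Phi$ and composing with \cref{char-eq}) with $\RHom_R(-,G)$ on the grounds that both triangulated functors agree on the heart $\Acal$. This is not in general sufficient: two triangulated functors that agree on a heart need not be naturally isomorphic on $\D^\bdd(\Acal)$ without a compatible enhancement or an explicit check on maps between objects in different cohomological degrees. The paper avoids this by never constructing two candidate functors to compare; it builds the single adjunction from the dg data directly.
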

\begin{proof}
 Let $A = \dgEnd_R(G)^\op$ so that $A$ is quasi-isomorphic to $S^\op$ and $G$ is an $A$-$R$-dg-bimodule. There is the dual adjunction 
\[
\xymatrix{%
	**[l]\D^\bdd_\fg(R) \ar@/_10pt/[r]_-{\RHom_R(-,G)}  &
		**[r]\D^\bdd(\rdgMod A)^\op_{\rmod S} \ar@/_10pt/[l]_-{\RHom_A(-,G)}
},
\]
 where $\D^\bdd(\dgMod A)_{\mod S}$ is the full subcategory of $\D(\dgMod A)$ consisting of dg-modules $Z$ such that $\bigoplus_{i \in \Zbb}H^i(Z)$ is a finitely generated $S$-module.
Composing this with the natural equivalence $\epsilon\colon\D(\dgMod A) \cong \D(S)$, which restricts to an equivalence $\epsilon\colon\D^\bdd(\dgMod A)_{\mod S} \cong \D^\bdd(\mod S)$, we get a dual adjunction:
\[
\xymatrix{%
	**[l]\D^\bdd_\fg(R) \ar@/_10pt/[r]_-{\epsilon\RHom_R(-,G)}  &
		**[r]\D^\bdd(\mod S)^\op \ar@/_10pt/[l]_-{\RHom_A(\epsilon^{-1}(-),G)}
}.
\]
Let us denote the unit and counit of the latter dual adjunction as 
\begin{align*}
	\eta_X\colon X &\longrightarrow
		\RHom_A(\epsilon^{-1}\epsilon\RHom_R(X,G),G) \cong
		\RHom_S(\RHom_R(X,\epsilon G), \epsilon G), \cr
	\nu_Z\colon Z &\longrightarrow
		\epsilon\RHom_R(\RHom_A(\epsilon^{-1}Z,G),G) \cong
		\RHom_R(\RHom_S(Z,\epsilon G),\epsilon G),
\end{align*}
where $X \in \D^\bdd_\fg(R)$ and $Z \in \D^\bdd(\mod S)$; for the isomorphisms we use \cite[Theorem~12.7.2]{Yek}, note that $G \cong \epsilon G$ as objects of $\D^\bdd_\fg(R)$. Our goal is to show that both $\eta_X$ and $\nu_Z$ are quasi-isomorphisms.

A standard argument shows that the above setting is compatible with localization at a prime ideal. Since quasi-isomorphisms are detected locally on maximal ideals, we can without loss of generality assume that $R$ is a local ring and $\ed$ is the standard codimension function. Applying $-\otimes_R\what{R}$, to $\eta_X$ and $\nu_Z$, we obtain the unit and counit map
\begin{align*}
	\eta_{\what{X}}\colon \what{X} &\longrightarrow
		\RHom_{M_{k}(\what{R})}(\RHom_{\what{R}}(\what{X},
		\what{\epsilon G}), \what{\epsilon G}), \cr
	\nu_{\what{Z}}: \what{Z} &\longrightarrow
		\RHom_{\what{R}}(\RHom_{M_{k}(\what{R})}(\what{Z},
		\what{\epsilon G}),\what{\epsilon G}),
\end{align*}
using that fact that all the objects considered belong to $\D^\bdd_\fg(R)$ and \cref{Gor-prop}. We have $\what{\epsilon G} \cong D_{\what{R}}^{k}$ by \cref{Gor-prop}, and this isomorphism lives both in $\D(\what{R})$ and $\D(\what{S}) \cong \D(M_{k}(\what{R}))$. Therefore, $\eta_{\what{X}}$ and $\nu_{\what{Z}}$ are the unit and counit morphisms of the dual adjunction
\[
\xymatrix{%
	**[l]\D^\bdd_\fg(R) \ar@/_10pt/[r]_-{\RHom_R(-,D^k_{\what R})}  &
		**[r]\D^\bdd(\mod M_{k}(\what R))^\op
		\ar@/_10pt/[l]_-{\RHom_{M_{k}(\what R)}(-,D^k_{\what R})}
},
\]
for objects $\what{X} \in \D^\bdd_\fg(R)$ and $\what{Z} \in \D^\bdd(\mod {M_{k}(\what{R}}))$. But this latter adjunction arises as the duality on $\D^\bdd_\fg(R)$ induced by $D_{\what{R}}$ composed with the Morita equivalence $\Mod {\what{R}} \cong \Mod {M_{k}(\what{R}})$.
\end{proof}

The following is a characterization of the existence of a Gorenstein complex in terms of the existence of a generalized duality $\D^\bdd_\fg(R) \cong \D^\bdd(\mod S)^\op$ to a category $\mod S$ of finitely presented right modules over a ring $S$.

\begin{theorem}\label{Gor-eq}%
Let $R$ be a commutative noetherian ring of finite Krull dimension. The following are equivalent:

\begin{enumerate}[(a)]
\item there is a Gorenstein complex in $\D^\bdd_\fg(R)$;

\item $R$ is a homomorphic image of a Cohen--Macaulay ring and there is a ring $S$ such that $\fp{\Hcal_\mathrm{CM}}^{\op} \cong \mod S$.
\end{enumerate}

\noindent In addition, the ring $S$ of \textup{(b)} is an Azumaya algebra over $R$.
\end{theorem}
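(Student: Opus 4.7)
The plan is to deduce each direction by assembling the earlier results, notably \cref{char-eq}, \cref{Gor-duality}, \cref{Gorenstein-fpinj}, and \cref{Gor-prop}. For $\mathrm{(a)} \Rightarrow \mathrm{(b)}$, starting from a Gorenstein complex $G$ I would set $S = \End_{\D(R)}(G)^{\op}$. \cref{Gor-prop}(vi) tells us that $S$ is module-finite over the noetherian ring $R$, hence noetherian itself, so $\mod S$ is a noetherian abelian category. \cref{Gor-duality} then provides a triangle equivalence $\D^\bdd_\fg(R) \cong \D^\bdd(\mod S)^{\op}$, and since $\dim R < \infty$ the implication $\mathrm{(d)} \Rightarrow \mathrm{(a)}$ of \cref{char-eq} applies, yielding that $R$ is a homomorphic image of a Cohen--Macaulay ring; the ``Furthermore'' clause of that theorem identifies $\mod S \cong \fp{\Hcal_\mathrm{CM}}^{\op}$.

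For $\mathrm{(b)} \Rightarrow \mathrm{(a)}$, the hypothesis puts us in the setting of \cref{char-eq}, so the Cohen--Macaulay heart $\Hcal_\mathrm{CM}$ is well-defined, $\fp{\Hcal_\mathrm{CM}}$ is artinian, and its opposite $\fp{\Hcal_\mathrm{CM}}^{\op} \cong \mod S$ is noetherian. Dualizing, $\fp{\Hcal_\mathrm{CM}} \cong (\mod S)^{\op}$ carries an injective cogenerator, namely the image of the projective generator $S$ of $\mod S$. \cref{Gorenstein-fpinj} then produces a Gorenstein complex in $\D^\bdd_\fg(R)$, as required.

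For the Azumaya claim, \cref{Gor-prop}(vi) already supplies the key ingredients: $S$ is finitely generated and projective over $R$, and for every maximal ideal $\mm \subseteq R$ the completion $\what{S_\mm} = S_\mm \otimes_{R_\mm} \what{R_\mm}$ is isomorphic to a matrix algebra $M_{k^2}(\what{R_\mm})$ (with $k = k_\mm$). Matrix algebras over commutative local rings are Azumaya, so the natural map $S_\mm \otimes_{R_\mm} S_\mm^{\op} \to \End_{R_\mm}(S_\mm)$ becomes an isomorphism after the faithfully flat base change $R_\mm \to \what{R_\mm}$, and hence was already an isomorphism over $R_\mm$; since the Azumaya property is local on the base, $S$ is Azumaya over $R$. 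The main obstacle, if any, is conceptual rather than computational: one must match the Morita class given by $\End_{\D(R)}(G)^{\op}$ with the $\mod S$ side of the locally coherent/coperfect dictionary from \cref{char-eq}, after which both implications and the Azumaya property fall out of the local matrix structure recorded in \cref{Gor-prop}(vi).
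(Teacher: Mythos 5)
Your proof of the equivalence $\mathrm{(a)}\Leftrightarrow\mathrm{(b)}$ is correct and follows the paper's route almost exactly: \cref{Gor-duality} plus \cref{char-eq} for $\mathrm{(a)}\Rightarrow\mathrm{(b)}$, and the observation that a projective generator of $\mod S$ gives an injective cogenerator of $\fp{\Hcal_\mathrm{CM}}$ feeding into \cref{Gorenstein-fpinj} for the converse. The one genuine divergence is the Azumaya claim: the paper disposes of it with a direct citation of \cite[Theorem~6.3.8]{NS} (and in fact uses the Azumaya property to conclude $S$ is two-sided noetherian, whereas you extract noetherianity directly from module-finiteness), while you give a self-contained argument by localizing at each maximal ideal $\mm$, invoking the $\mm$-adic completion description $\what{S_\mm} \cong M_{k^2}(\what{R_\mm})$ from \cref{Gor-prop}(vi), and descending the Azumaya isomorphism along the faithfully flat map $R_\mm \to \what{R_\mm}$. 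That argument is sound; the only step worth making explicit is that you are implicitly combining \cref{Gor-prop}(iii) with the standard compatibility $\End_{\D(R)}(G)_\mm \cong \End_{\D(R_\mm)}(G_\mm)$ (valid since $G$ is a perfect-injective bounded complex with finitely generated cohomology) before applying \cref{Gor-prop}(vi) over the local ring $R_\mm$. The benefit of your version is that it keeps the Azumaya verification internal to the results already established in the paper, at the cost of a few lines; the paper's citation is shorter but less transparent.
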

\begin{proof}\leavevmode
\subparagraph{$\mathrm{(b)}\Rightarrow\mathrm{(a)}$} This follows by \cref{Gorenstein-fpinj} by noting that $\mod S$ has a projective generator $S$, which in turn implies that $\fp{\Hcal_\mathrm{CM}}$ has an injective cogenerator.

\subparagraph{$\mathrm{(a)}\Rightarrow\mathrm{(b)}$} By \cref{Gor-duality}, we have the equivalence $\D^\bdd_\fg(R) \cong \D^\bdd((\mod S)^\op)$, where $S = \End_{\D(R)}(G)^\op$. Then $S$ is a module-finite $R$-algebra, in fact, it is an Azumaya $R$-algebra by \cite[Theorem~6.3.8]{NS}. Then $S$ is noetherian on both sides, and so $(\mod S)^\op$ is an artinian category. It follows that $R$ is a homomorphic image of a Cohen--Macaulay ring by \cref{char-eq}, which also shows that $(\mod S)^\op \cong \fp{\Hcal_\mathrm{CM}}$.
\end{proof}

\begin{cor}
Let $R$ be a commutative noetherian ring of finite Krull dimension. The following are equivalent:

\begin{enumerate}[(a)]
\item there is a dualizing complex over $R$;

\item there is a triangle equivalence $\D^\bdd_\fg(R) \cong \D^\bdd_\fg(R)^{\op}$;

\item $R$ is a homomorphic image of a Cohen--Macaulay ring, and $\fp {\Hcal_\mathrm{CM}} \cong (\mod R)^\op$.
\end{enumerate}
\end{cor}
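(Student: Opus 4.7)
The plan is to close the cycle $\mathrm{(a)}\Rightarrow\mathrm{(b)}\Rightarrow\mathrm{(c)}\Rightarrow\mathrm{(a)}$, where the first two implications follow readily from the results already established and the last is the substantive step.

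The implication $\mathrm{(a)}\Rightarrow\mathrm{(b)}$ is immediate from the very definition of a dualizing complex. For $\mathrm{(b)}\Rightarrow\mathrm{(c)}$, the noetherian abelian category $\Bcal\leqdef\mod R$ witnesses condition~$\mathrm{(d)}$ of \cref{char-eq} via the supplied triangle equivalence $\D^\bdd_\fg(R)\cong\D^\bdd(\Bcal)^\op$, and the theorem simultaneously yields that $R$ is a homomorphic image of a finite-dimensional Cohen--Macaulay ring and that $\fp{\Hcal_\mathrm{CM}}\cong\Bcal^\op=(\mod R)^\op$.

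For the main implication $\mathrm{(c)}\Rightarrow\mathrm{(a)}$, I would fix any equivalence $\Psi\colon\fp{\Hcal_\mathrm{CM}}\toeq(\mod R)^\op$ and let $G\in\fp{\Hcal_\mathrm{CM}}$ denote the preimage under $\Psi$ of the module $R$. Because $R$ is a projective generator of $\mod R$, its image $R$ in $(\mod R)^\op$ is an injective cogenerator, and therefore so is $G$ inside $\fp{\Hcal_\mathrm{CM}}$. The ``In fact'' clause of \cref{Gorenstein-fpinj} then certifies that $G$ is a Gorenstein complex. Since $\fp{\Hcal_\mathrm{CM}}$ is a full subcategory of $\D(R)$, transporting along $\Psi$ gives
\[
	\End_{\D(R)}(G) = \End_{\fp{\Hcal_\mathrm{CM}}}(G)
		\cong \End_{(\mod R)^\op}(R) = R^\op = R,
\]
so that $S\leqdef\End_{\D(R)}(G)^\op = R$. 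Applying \cref{Gor-duality} with this $S$ yields a triangle equivalence $\RHom_R(-,G)\colon\D^\bdd_\fg(R)\toeq\D^\bdd(\mod R)^\op = \D^\bdd_\fg(R)^\op$ induced by $\RHom_R(-,G)$, which is precisely the statement that $G$ is a dualizing complex.

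The crux of the argument is the identification $\End_{\D(R)}(G) = R$: a priori, the endomorphism ring of a Gorenstein complex is only an Azumaya $R$-algebra (by \cref{Gor-eq}), whose Brauer class may be nontrivial. The choice of $G$ as the $\Psi$-preimage of the canonical module $R$ is exactly what trivializes this Brauer class, and that triviality is what elevates the Gorenstein duality of \cref{Gor-duality} to the self-duality of $\D^\bdd_\fg(R)$ characterizing dualizing complexes.
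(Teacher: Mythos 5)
Your proof is correct and takes essentially the same approach as the paper: establish $\mathrm{(a)}\Rightarrow\mathrm{(b)}$ by definition, deduce $\mathrm{(b)}\Rightarrow\mathrm{(c)}$ from \cref{char-eq}, and for $\mathrm{(c)}\Rightarrow\mathrm{(a)}$ produce via \cref{Gorenstein-fpinj} a Gorenstein complex $G$ with $\End_{\D(R)}(G)\cong R$ and conclude by \cref{Gor-duality}. Your extra remark about the Azumaya Brauer class being trivialized is a pleasant gloss on why $\End_{\D(R)}(G)=R$, but the underlying argument is the one in the paper.
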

\begin{proof}\leavevmode
\subparagraph{$\mathrm{(a)}\Rightarrow\mathrm{(b)}$} By definition.

\subparagraph{$\mathrm{(b)}\Rightarrow\mathrm{(c)}$} The assumption yields a derived equivalence $\D^\bdd_\fg(R) \cong \D^\bdd((\mod R)^{\op})$. Since $(\mod R)^{\op}$ is artinian, \cref{char-eq} implies (c).

\subparagraph{$\mathrm{(c)}\Rightarrow\mathrm{(a)}$} As in \cref{Gor-eq}, we see that $R$ admits a Gorenstein complex $D$ such that $\End_{\D(R)}(D) \cong R$. Then $D$ is a dualizing complex by \cref{Gor-duality}.
\end{proof}

It is well-known that a dualizing complex is a cotilting object in the category $\D^\bdd_\fg(R)$, see \cite[Remark~7.7]{HNS} for a discussion. We conclude by extending this to a tilting theoretic characterization of Gorenstein complexes.

\begin{prop}
Let $G \in \D^\bdd_\fg(R)$. Then $G$ is a Gorenstein complex if and only if it is a cotilting object in $\D^\bdd_\fg(R)$.
\end{prop}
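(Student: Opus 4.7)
The plan is to prove both directions using \cref{Gorenstein-fpinj} as the bridge, which characterizes Gorenstein complexes exactly as injective cogenerators of $\fp{\Hcal_\mathrm{CM}}$ when $R$ is CM-excellent of finite Krull dimension with a codimension function.

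For Gorenstein $\Rightarrow$ cotilting, I would first note that a Gorenstein complex exists only when $R$ satisfies the above CM-excellence conditions (by \cref{Gor-eq} and \cref{Kaw}); \cref{Gorenstein-fpinj} then identifies $G$ as an injective cogenerator in $\fp{\Hcal_\mathrm{CM}}$. By \cref{restrict-char} the CM t-structure is restrictable, giving via the realization functor an equivalence $\D^\bdd_\fg(R) \cong \D^\bdd(\fp{\Hcal_\mathrm{CM}})$. Inside $\D^\bdd(\fp{\Hcal_\mathrm{CM}})$ an injective cogenerator induces the standard t-structure $(\D^{<0}, \D^{\geq 0}) = (\Perp{\leq 0}G, \Perp{>0}G)$ (the inclusion $\D^{<0} \subseteq \Perp{\leq 0}G$ follows from injectivity, the converse from the cogenerator property applied to the top nonzero cohomology of an object); since $\Prod(G) = \add(G) \subseteq \fp{\Hcal_\mathrm{CM}}$, $G$ is cotilting there, and the property transports back to $\D^\bdd_\fg(R)$.

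For cotilting $\Rightarrow$ Gorenstein, assume $G$ is cotilting in $\D^\bdd_\fg(R)$ with heart $\Hcal_G$. By the bijection \cref{restrictable-props}(iv), the induced bounded t-structure extends to a restrictable compactly generated t-structure $(\Ucal, \Vcal)$ on $\D(R)$ with Grothendieck heart $\Hcal$ satisfying $\fp{\Hcal} \cong \Hcal_G$. The boundedness of $G$ makes this extension intermediate, so by \cref{restrictable-props}(i) $(\Ucal, \Vcal)$ is a cotilting t-structure, $\Hcal$ is locally coherent, and \cref{tilting-bounded} yields $\D^\bdd(R) \cong \D^\bdd(\Hcal)$. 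A direct verification (dual to that of (ii') in \cref{tilting-bounded}, exploiting the heart identity $\Hom(X, G[i]) = 0$ for $i \neq 0$, $X \in \Hcal_G$, together with $\Ucal \cap \Vcal = 0$) shows that $G$ is an injective cogenerator of $\Hcal_G = \fp{\Hcal}$.

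The central step is proving that $\Hcal$ is locally coperfect, so that condition (b) of \cref{char-eq} applies. I would establish this via a Morita-style duality $\Hom_\Hcal(-, G) \colon \fp{\Hcal} \cong (\mod S)^{\op}$, where $S = \End_\Hcal(G)^{\op}$. The $\Hom$-finiteness of $\D^\bdd_\fg(R)$ over $R$ makes $S$ module-finite over $R$ and hence noetherian on both sides; it also makes $\Hom_\Hcal(X, G)$ finitely generated over $S$ for every $X$, which together with the cogenerator property yields embeddings $X \hookrightarrow G^n$. With injectivity of $G$ rendering $\Hom_\Hcal(-, G)$ exact and faithful, the standard Morita-duality machinery produces the equivalence, forcing $\fp{\Hcal} \cong (\mod S)^{\op}$ to be artinian and $\Hcal$ locally coperfect. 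Combined with the equivalence $\D^\bdd(R) \cong \D^\bdd(\Hcal)$, this realizes condition (b) of \cref{char-eq}, whence $R$ is a homomorphic image of a finite-dimensional Cohen--Macaulay ring with $\Hcal \cong \Hcal_\mathrm{CM}$, and \cref{Gorenstein-fpinj} applied in reverse identifies $G$ as Gorenstein.

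The main obstacle is setting up the Morita-type duality rigorously; in particular, proving essential surjectivity of $\Hom_\Hcal(-, G)$ onto $\mod S$ requires reflexivity arguments that rely on $G$ providing enough injectivity inside $\fp{\Hcal}$, and care is needed because $\Hcal$ itself can be significantly larger than $\fp{\Hcal}$.
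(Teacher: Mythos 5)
Your proof is correct overall and the second direction (cotilting $\Rightarrow$ Gorenstein) follows essentially the same path as the paper: extend the bounded t-structure to a restrictable compactly generated intermediate t-structure via \cref{restrictable-props}, show that $G$ is an injective cogenerator of $\fp{\Hcal}$, deduce an equivalence $\fp{\Hcal}^\op \cong \mod S$ with $S = \End_{\D(R)}(G)^\op$ a module-finite (hence noetherian) $R$-algebra so that $\fp\Hcal$ is artinian, invoke \cref{char-eq} to identify $\Hcal \cong \Hcal_{\mathrm{CM}}$, and conclude with \cref{Gorenstein-fpinj}. The paper is terser here, citing a result of Psaroudakis--Vit\'oria for the injective-cogenerator claim and leaving the Morita duality $\fp\Hcal^\op \cong \mod S$ implicit, whereas you unfold both; what you label as ``the main obstacle'' is exactly the standard Gabriel-type projective-generator argument dual to the one the paper silently uses, and your sketch of it is sound.

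For the first direction (Gorenstein $\Rightarrow$ cotilting) your route genuinely differs from the paper's. You pass through \cref{Gor-eq} (hence CM-excellence and \cref{Gorenstein-fpinj}) to identify $G$ as an injective cogenerator of $\fp{\Hcal_{\mathrm{CM}}}$, then transport the standard t-structure of $\D^\bdd(\fp{\Hcal_{\mathrm{CM}}})$ across the restriction of the derived equivalence and verify directly that an injective cogenerator of a small abelian category with nondegenerate bounded t-structure is cotilting. The paper instead goes straight through \cref{Gor-duality}: the triangle equivalence $\D^\bdd_\fg(R) \cong \D^\bdd(\mod S)^\op$ sends $S$ (a tilting object on the right) to $G \cong \RHom_A(A,G)$ on the left, immediately exhibiting $G$ as cotilting. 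The paper's argument is self-contained and avoids \cref{Gorenstein-fpinj} entirely; your route is logically valid but more circuitous, and since \cref{Gor-eq}(a)$\Rightarrow$(b) is itself proved via \cref{Gor-duality}, you haven't really reduced the machinery needed --- you have just rerouted through a theorem whose proof already contains the paper's argument. Both versions quietly rely on $\dim(R)<\infty$ in the back-and-forth with \cref{char-eq} and \cref{Gor-eq}; this is a shared (and harmless) imprecision, not a defect of your proposal.
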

\begin{proof}
 Let $G$ be a Gorenstein complex, then by \cref{Gor-duality} we have a triangle equivalence $\D^\bdd_\fg(R) \toeq \D^\bdd(\mod S)^\op$ induced by $\RHom_R(-,G)$ and $\RHom_A(-,G)$ where $S = \End_{\D(R)}(G)^\op$ and $A$ is its dg-resolution. Since $S$ is a tilting object in $\D^\bdd(\mod S)$, the equivalence implies that $G \cong \RHom_A(A,G)$ is a cotilting object in $\D^\bdd_\fg(R)$.

For the converse, let $G \in \D^\bdd_\fg(R)$ be a cotilting object. By definition, we have a t-structure $(\Ucal_0,\Vcal_0) = (\Perp{\leq 0}G,\Perp{>0}G)$ in $\D^\bdd_\fg(R)$, and using \cref{restrictable-props} this t-structure extends to a restrictable intermediate t-structure $(\Ucal,\Vcal)$ in $\D(R)$. By \cref{restrictable-props}, the heart $\Hcal$ of $(\Ucal,\Vcal)$ is locally coherent and also this t-structure is cotilting. We thus have a triangle equivalence $\D(R) \toeq \D(\Hcal)$, which restricts to a triangle equivalence $\D^\bdd_\fg(R) \toeq \D^\bdd(\fp{\Hcal})$ (\cite[Lemma~3.13]{HP}). Since $G$ is cotilting in $\D^\bdd_\fg(R)$, $G$ is an injective cogenerator in $\fp{\Hcal}$ by \cite[Proposition~4.3]{PV18}, and so $\fp{\Hcal}^\op \cong \mod{S}$, where $S = \End_{\D(R)}(G)^\op$. Since $G \in D^\bdd_\fg(R)$, $S$ is a module-finite $R$-algebra, and thus a right noetherian ring in particular. It follows that $\fp{\Hcal}$ is an artinian abelian category, and thus $R$ is a homomorphic image of a finite-dimensional Cohen--Macaulay ring \cref{char-eq}. The proof of \cref{char-eq} shows that $(\Ucal,\Vcal)$ is induced by a codimension function on $\Spec R$, and so $\Hcal$ is the Cohen--Macaulay heart. Then \cref{Gorenstein-fpinj} shows that $G$ is a Gorenstein complex.
\end{proof}

\bibliographystyle{amsalpha}
\bibliography{bibitems}

\providecommand{\bysame}{\leavevmode\hbox to3em{\hrulefill}\thinspace}
\providecommand{\MR}{\relax\ifhmode\unskip\space\fi MR }
\providecommand{\MRhref}[2]{%
  \href{http://www.ams.org/mathscinet-getitem?mr=#1}{#2}
}
\providecommand{\href}[2]{#2}
\begin{thebibliography}{ATJLS10}

\bibitem[AHH21]{AHH19}
Lidia Angeleri~H\"{u}gel and Michal Hrbek, \emph{Parametrizing torsion pairs in derived categories}, Represent. Theory \textbf{25} (2021), 679--731. \MR{4293772}

\bibitem[AHMV17]{AHMV17}
Lidia Angeleri~H\"{u}gel, Frederik Marks, and Jorge Vit\'{o}ria, \emph{Torsion pairs in silting theory}, Pacific J. Math. \textbf{291} (2017), no.~2, 257--278.

\bibitem[ATJLS10]{ATJLS10}
Leovigildo Alonso~Tarr\'{\i}o, Ana Jerem\'{\i}as~L\'{o}pez, and Manuel Saor\'{\i}n, \emph{Compactly generated {$t$}-structures on the derived category of a {N}oetherian ring}, J. Algebra \textbf{324} (2010), no.~3, 313--346.

\bibitem[BBD82]{BBD81}
A.~A. Be\u{\i}linson, J.~Bernstein, and P.~Deligne, \emph{Faisceaux pervers}, Analysis and topology on singular spaces, {I} ({L}uminy, 1981), Ast\'{e}risque, vol. 100, Soc. Math. France, Paris, 1982, pp.~5--171. \MR{751966}

\bibitem[Bel00]{Bel}
Apostolos Beligiannis, \emph{Relative homological algebra and purity in triangulated categories}, J. Algebra \textbf{227} (2000), no.~1, 268--361. \MR{1754234}

\bibitem[BHM22]{BHM}
Simion Breaz, Michal Hrbek, and George~Ciprian Modoi, \emph{Silting, cosilting, and extensions of commutative ring}, arXiv preprint arXiv:2204.01374 (2022).

\bibitem[{\v{C}}es21]{Ces21}
K\k{e}stutis {\v{C}}esnavi\v{c}ius, \emph{Macaulayfication of {N}oetherian schemes}, Duke Math. J. \textbf{170} (2021), no.~7, 1419--1455.

\bibitem[CH09]{CH09}
Lars~Winther Christensen and Henrik Holm, \emph{Ascent properties of {A}uslander categories}, Canad. J. Math. \textbf{61} (2009), no.~1, 76--108.

\bibitem[GT12]{GT12}
R\"{u}diger G\"{o}bel and Jan Trlifaj, \emph{Approximations and endomorphism algebras of modules\textup{:} {V}olume 1 -- {A}pproximations}, second revised and extended ed., De Gruyter Expositions in Mathematics, vol.~41, Walter de Gruyter GmbH \& Co. KG, Berlin, 2012.

\bibitem[Har66]{Har66}
Robin Hartshorne, \emph{Residues and duality\textup{:} {L}ecture notes of a seminar on the work of {A}. {G}rothendieck, given at {H}arvard 1963/64}, with an appendix by P. Deligne, Lecture Notes in Mathematics, vol.~20, Springer-Verlag, Berlin-New York, 1966.

\bibitem[Hei82]{Hei82}
Raymond~C. Heitmann, \emph{A noncatenary, normal, local domain}, Rocky Mountain J. Math. \textbf{12} (1982), no.~1, 145--148. \MR{649747}

\bibitem[HHZ24]{HHZ21}
Michal Hrbek, Jiangsheng Hu, and Rongmin Zhu, \emph{Gluing compactly generated t-structures over stalks of affine schemes}, Israel J. Math (2024), Published online. https://doi.org/10.1007/s11856--024--2611--3.

\bibitem[HLG24]{HLG}
Michal Hrbek and Giovanna Le~Gros, \emph{Restricted injective dimensions over cohen-macaulay rings}, Algebr. Represent. Theor. \textbf{27} (2024), 1373--1393.

\bibitem[HN21]{HN21}
Michal Hrbek and Tsutomu Nakamura, \emph{Telescope conjecture for homotopically smashing t-structures over commutative noetherian rings}, J. Pure Appl. Algebra \textbf{225} (2021), no.~4, 106571, 13 pp.

\bibitem[HN{\v{S}}24]{HNS}
Michal Hrbek, Tsutomu Nakamura, and Jan {\v{S}}ťov\'{i}\v{c}ek, \emph{Tilting complexes and codimension functions over commutative noetherian rings}, Nagoya Mathematical Journal (2024), 1–76.

\bibitem[HP23]{HP}
Michal Hrbek and Sergio Pavon, \emph{Singular equivalences to locally coherent hearts of commutative noetherian rings}, J. Algebra \textbf{632} (2023), 117--153.

\bibitem[Hrb20]{Hrb20}
Michal Hrbek, \emph{Compactly generated t-structures in the derived category of a commutative ring}, Math. Z. \textbf{295} (2020), no.~1-2, 47--72.

\bibitem[Hrb24]{Hrb22}
\bysame, \emph{Topological endomorphism rings of tilting complexes}, Journal of the London Mathematical Society \textbf{109} (2024), no.~6, e12939.

\bibitem[HRW01]{HRW01}
William Heinzer, Christel Rotthaus, and Sylvia Wiegand, \emph{Intermediate rings between a local domain and its completion. {II}}, Illinois J. Math. \textbf{45} (2001), no.~3, 965--979.

\bibitem[Kaw02]{Kaw02}
Takesi Kawasaki, \emph{On arithmetic {M}acaulayfication of {N}oetherian rings}, Trans. Amer. Math. Soc. \textbf{354} (2002), no.~1, 123--149.

\bibitem[Kaw08]{Kaw08}
\bysame, \emph{Finiteness of {C}ousin cohomologies}, Trans. Amer. Math. Soc. \textbf{360} (2008), no.~5, 2709--2739.

\bibitem[Kra98]{Kr98b}
Henning Krause, \emph{Functors on locally finitely presented additive categories}, Colloquium Mathematicum, vol.~75, Instytut Matematyczny Polskiej Akademii Nauk, 1998, pp.~105--132.

\bibitem[Kra00]{Kr00}
\bysame, \emph{Smashing subcategories and the telescope conjecture---an algebraic approach}, Invent. Math. \textbf{139} (2000), no.~1, 99--133. \MR{1728877}

\bibitem[Lak20]{Lak20}
Rosanna Laking, \emph{Purity in compactly generated derivators and t-structures with {G}rothendieck hearts}, Math. Z. \textbf{295} (2020), no.~3-4, 1615--1641. \MR{4125704}

\bibitem[LV20]{LV20}
Rosanna Laking and Jorge Vit\'{o}ria, \emph{Definability and approximations in triangulated categories}, Pacific J. Math. \textbf{306} (2020), no.~2, 557--586.

\bibitem[Mat89]{Mat89}
Hideyuki Matsumura, \emph{Commutative ring theory}, translated from the {J}apanese by {M}. {R}eid, second ed., Cambridge Studies in Advanced Mathematics, vol.~8, Cambridge University Press, Cambridge, 1989.

\bibitem[MV18]{MV18}
Frederik Marks and Jorge Vit\'{o}ria, \emph{Silting and cosilting classes in derived categories}, J. Algebra \textbf{501} (2018), 526--544.

\bibitem[MZ23]{MZ23}
Frederik Marks and Alexandra Zvonareva, \emph{Lifting and restricting t-structures}, Bull. Lond. Math. Soc. \textbf{55} (2023), no.~2, 640--657. \MR{4575927}

\bibitem[Nag56]{Nag56}
Masayoshi Nagata, \emph{On the chain problem of prime ideals}, Nagoya Math. J. \textbf{10} (1956), 51--64.

\bibitem[Nee01]{Nee01}
Amnon Neeman, \emph{Triangulated categories}, Annals of Mathematics Studies, vol. 148, Princeton University Press, Princeton, NJ, 2001.

\bibitem[Nee10]{Nee10}
\bysame, \emph{Derived categories and {G}rothendieck duality}, Triangulated categories, London Math. Soc. Lecture Note Ser., vol. 375, Cambridge Univ. Press, Cambridge, 2010, pp.~290--350.

\bibitem[Nis12]{Nis12}
Jun-ichi Nishimura, \emph{A few examples of local rings, {I}}, Kyoto J. Math. \textbf{52} (2012), no.~1, 51--87.

\bibitem[NS07]{NS}
Suresh Nayak and Pramathanath Sastry, \emph{Applications of duality theory to {C}ousin complexes}, J. Algebra \textbf{317} (2007), no.~1, 43--86. \MR{2360141}

\bibitem[NS18]{NS18}
Pedro Nicol\'{a}s and Manuel Saor\'{\i}n, \emph{Generalized tilting theory}, Appl. Categ. Structures \textbf{26} (2018), no.~2, 309--368. \MR{3770911}

\bibitem[NSZ19]{NSZ19}
Pedro Nicol\'{a}s, Manuel Saor\'{\i}n, and Alexandra Zvonareva, \emph{Silting theory in triangulated categories with coproducts}, J. Pure Appl. Algebra \textbf{223} (2019), no.~6, 2273--2319.

\bibitem[Pos21]{contramodules}
Leonid Positselski, \emph{Contramodules}, Confluentes Math. \textbf{13} (2021), no.~2, 93--182. \MR{4400900}

\bibitem[Pre09]{Prest}
Mike Prest, \emph{Purity, spectra and localisation}, Encyclopedia of Mathematics and its Applications, vol. 121, Cambridge University Press, Cambridge, 2009. \MR{2530988}

\bibitem[P{\v{S}}21]{PS21}
Leonid Positselski and Jan {\v{S}}{\v{t}}ov\'{\i}\v{c}ek, \emph{The tilting-cotilting correspondence}, Int. Math. Res. Not. IMRN (2021), no.~1, 191--276. \MR{4198495}

\bibitem[P{\v{S}}22]{PS19b}
Leonid Positselski and Jan {\v{S}}\v{t}ov{\'\i}{\v{c}}ek, \emph{Topologically semisimple and topologically perfect topological rings}, Publicacions Matem{\`a}tiques \textbf{66} (2022), no.~2, 457--540.

\bibitem[PV18]{PV18}
Chrysostomos Psaroudakis and Jorge Vit\'{o}ria, \emph{Realisation functors in tilting theory}, Math. Z. \textbf{288} (2018), no.~3-4, 965--1028.

\bibitem[PV21]{PV20}
Sergio Pavon and Jorge Vit\'{o}ria, \emph{Hearts for commutative {N}oetherian rings: torsion pairs and derived equivalences}, Doc. Math. \textbf{26} (2021), 829--871. \MR{4310223}

\bibitem[Ric89]{Ri89}
Jeremy Rickard, \emph{Morita theory for derived categories}, J. London Math. Soc. (2) \textbf{39} (1989), no.~3, 436--456. \MR{1002456}

\bibitem[Roo69]{Roos69}
Jan~Erik Roos, \emph{Locally {N}oetherian categories and generalized strictly linearly compact rings. {A}pplications}, Category {T}heory, {H}omology {T}heory and their {A}pplications, {II} ({B}attelle {I}nstitute {C}onference, {S}eattle, {W}ash., 1968, {V}ol. {T}wo), Springer, Berlin, 1969, pp.~197--277. \MR{0407092}

\bibitem[Sao17]{Sao17}
Manuel Saor\'{\i}n, \emph{On locally coherent hearts}, Pacific J. Math. \textbf{287} (2017), no.~1, 199--221. \MR{3613439}

\bibitem[Sch83]{Sch83}
Peter Schenzel, \emph{Standard systems of parameters and their blowing-up rings}, J. Reine Angew. Math. \textbf{344} (1983), 201--220. \MR{716256}

\bibitem[Sha69]{Sh69}
Rodney~Y. Sharp, \emph{The {C}ousin complex for a module over a commutative {N}oetherian ring}, Math. Z. \textbf{112} (1969), 340--356. \MR{263800}

\bibitem[Sha79]{Sha79}
\bysame, \emph{Necessary conditions for the existence of dualizing complexes in commutative algebra}, S\'{e}minaire d'{A}lg\`ebre {P}aul {D}ubreil 31\`eme ann\'{e}e ({P}aris, 1977--1978), Lecture Notes in Math., vol. 740, Springer, Berlin, 1979, pp.~213--229.

\bibitem[S{\v{S}}23]{SS20}
Manuel Saor{\'\i}n and Jan {\v{S}}t’ov{\'\i}{\v{c}}ek, \emph{t-structures with grothendieck hearts via functor categories}, Selecta Mathematica \textbf{29} (2023), no.~5, 77.

\bibitem[S{\v{S}}V23]{SSV17}
Manuel Saor\'{\i}n, Jan {\v{S}}\v{t}ov\'{\i}\v{c}ek, and Simone Virili, \emph{{$t$}-{S}tructures on stable derivators and {G}rothendieck hearts}, Adv. Math. \textbf{429} (2023), Paper No. 109139. \MR{4608346}

\bibitem[{Sta}]{Stacks}
The {Stacks Project Authors}, \emph{\textit{Stacks Project}}, \url{https://stacks.math.columbia.edu}.

\bibitem[Ste75]{SS75}
Bo~Stenstr{\"o}m, \emph{Rings and modules of quotients}, Rings of Quotients, Springer, 1975, pp.~195--212.

\bibitem[Tak23]{Tak22}
Ryo Takahashi, \emph{Faltings' annihilator theorem and {$t$}-structures of derived categories}, Math. Z. \textbf{304} (2023), no.~1, Paper No. 10, 13. \MR{4575439}

\bibitem[Vir18]{SV18}
Simone Virili, \emph{Morita theory for stable derivators}, arXiv preprint arXiv:1807.01505 (2018).

\bibitem[Yek20]{Yek}
Amnon Yekutieli, \emph{Derived categories}, Cambridge Studies in Advanced Mathematics, vol. 183, Cambridge University Press, Cambridge, 2020. \MR{3971537}

\bibitem[YZ05]{YZ05}
Amnon Yekutieli and James~J. Zhang, \emph{Dualizing complexes and perverse modules over differential algebras}, Compos. Math. \textbf{141} (2005), no.~3, 620--654. \MR{2135281}

\bibitem[YZ06]{YZ06}
\bysame, \emph{Dualizing complexes and perverse sheaves on noncommutative ringed schemes}, Selecta Math. (N.S.) \textbf{12} (2006), no.~1, 137--177. \MR{2244264}

\end{thebibliography}
\end{document}